
\documentclass[reqno,10pt]{amsart}

\usepackage{amsfonts}
\usepackage{amsmath}
\usepackage{amssymb}
\usepackage{calc}
\usepackage{graphicx}

\addtolength{\textwidth}{4cm} \addtolength{\hoffset}{-2cm}
\addtolength{\textheight}{3cm} \addtolength{\voffset}{-1cm}



 \usepackage{verbatim} 
\usepackage{amsthm} 
\usepackage[colorlinks=true, pdfstartview=FitV, linkcolor=black, 
      citecolor=black, urlcolor=black]{hyperref}
\usepackage{multicol}

\numberwithin{equation}{subsection}

\newtheorem{theorem}{Theorem}[subsection]
\newtheorem{lemma}[theorem]{Lemma}
\newtheorem{proposition}[theorem]{Proposition}
\newtheorem{corollary}[theorem]{Corollary}
\newtheorem{conjecture}[theorem]{Conjecture}

\theoremstyle{definition}
\newtheorem{remark}[theorem]{Remark}
\newtheorem{example}[theorem]{Example}

\newcommand{\bfv} {{\bf{v}}}
\newcommand{\bfw} {{\bf{w}}}

\newcommand{\bfu} {{\bf{u}}}

\newcommand{\bfb} {{\bf{b}}}
\newcommand{\bfc} {{\bf{c}}}
\newcommand{\bfm} {{\bf{m}}}
\newcommand{\bbA}{{\mathbb{A}}}

\newcommand{\bbP}{{\mathbb{P}}}
\newcommand{\bF}{{\mathbb{F}}}
\newcommand{\bbK}{{\mathbb{K}}}
\newcommand{\bbR}{{\mathbb{R}}}
\newcommand{\bbQ}{{\mathbb{Q}}}

\newcommand{\bbZ}{{\mathbb{Z}}}
\newcommand{\bbN}{{\mathbb{N}}}
\newcommand{\cD}{{\mathcal{D}}}
\newcommand{\cL}{{\mathcal{L}}}
\newcommand{\cB}{{\mathcal{B}}}
\newcommand{\cQ}{{\mathcal{Q}}}
\newcommand{\cOO}{{\mathcal{O}}}

\newcommand{\cS}{{\mathcal{S}}}
\newcommand{\cK}{{\mathcal{K}}}
\newcommand{\Pic}{{\rm Pic}}
\newcommand{\Mor}{ \overline{\rm NE}}
\newcommand{\NE}{ {\rm NE}}
\newcommand{\Nef}{{\rm Nef}}

\newcommand{\fp} {{\mathfrak{a}}}
\newcommand{\fa} {{\mathfrak{b}}}
\newcommand{\fq} {{\mathfrak{q}}}

\newcommand{\cO}{{\mathcal{O}}}

\begin{document}
\title[Variations on Nagata's Conjecture] {Variations on Nagata's Conjecture}

\author{Ciro Ciliberto}
\address{Dipartimento di Matematica, II Universit\`a di Roma, Italy}
\email{cilibert@axp.mat.uniroma2.it}

\author{Brian Harbourne} 
\address{Department of Mathematics, University of Nebraska, Lincoln, NE 68588-0130 USA}
\email{bharbour@math.unl.edu}

\author{Rick Miranda}
\address{Department of Mathematics, Colorado State University, Fort Collins, CO 80523}
\email{Rick.Miranda@ColoState.Edu}

\author{Joaquim Ro\'e}
\address{Departament de Matem\`atiques, Universitat Aut\`onoma de Barcelona,
Edifici C, Campus de la UAB, 08193 Bellaterra (Cerdanyola del
Vall\`es)} \email{jroe@mat.uab.cat}

\begin{abstract} Here we discuss some variations of Nagata's conjecture  on linear systems of plane curves. The 
most relevant concerns  non-effectivity (hence nefness) of certain rays, which we call \emph{good rays}, in the Mori cone of the blow-up $X_n$ of the plane at $n\ge 10$ general points.  Nagata's original result was the existence of a good ray for $X_n$ with $n\ge 16$ a square number. 
Using degenerations,  we give examples of good rays for $X_n$ for all $n\ge 10$. As with Nagata's 
original result, 
this implies the existence of counterexamples to Hilbert's XIV problem. Finally we show that Nagata's 
conjecture for $n\le 89$ combined with a stronger conjecture for $n=10$ implies
Nagata's conjecture for $n\ge 90$.
\end{abstract}

\date{January 30, 2012}

\maketitle

\tableofcontents

\section*{Introduction}
A fundamental problem in algebraic geometry is understanding which divisor classes on a given variety
have effective representatives. One of the simplest contexts for this problem
is that of curves in the plane, and here already it is 
of substantial interest, and not only in algebraic geometry.
For example, given $n$ sufficiently general points $x_1,\ldots, x_n$ in the complex plane
$\mathbb C^2$, nonnegative integers $m_1,\ldots,m_n$ and an integer $d$,
when is there a polynomial $f\in {\mathbb C}[x,y]$ of degree $d$
vanishing to order at least $m_i$ at each point $x_i$?
Although there is a conjectural answer to this question (the (SHGH) Conjecture; see Conjecture \ref{conj:GHH}
and also \cite{Har04}), the conjecture remains open after more then a half century
of attention by many researchers.

This problem is closely related to the question of what self-intersections occur
for reduced irreducible curves
on the surface $X_n$ obtained by blowing up the projective plane at the $n$ points $x_i$.
Blowing up the points introduces rational curves (infinitely many, in fact, 
when $n>8$) of self-intersection $-1$.
Each curve $C$ on $X_n$ corresponds to a projective plane curve $D_C$ of some degree $d$
vanishing to orders $m_i$ at the points $x_i$; the self-intersection $C^2$
is  $d^2-m_1^2-\cdots-m_n^2$. An example of a curve $D_C$
corresponding to a curve $C$ of self-intersection $-1$ on $X_n$
is the line through two of the points $x_i$, say $x_1$ and $x_2$; in this case,
$d=1$, $m_1=m_2=1$ and $m_i=0$ for $i>2$, so we have  
$d^2-m_1^2-\cdots-m_n^2=-1$.
According to the (SHGH) Conjecture, these $(-1)$-curves should be the 
only reduced irreducible curves of negative self-intersection (see Conjecture \ref{NegativityConj})
but proving that there are no others turns out to be itself very hard
and is still open.

One could hope that a weaker version of this problem might 
satisfy the criterion Hilbert stated in his address to the International Congress in Paris in 1900,
of being difficult enough ``to entice us, 
yet not completely inaccessible so as not to mock our efforts''
(``uns reizt, und dennoch nicht v\"ollig unzug\"anglich, 
damit es unserer Anstrengung nicht spotte'').
In fact, Nagata, in connection with his 
negative solution of the 14-th of the problems Hilbert posed in his address, 
made such a conjecture, Conjecture \ref{conj:nagata}. It is weaker 
than Conjecture \ref{NegativityConj} yet still open for every non-square $n\geq 10$.
Nagata's conjecture
does not rule out the occurrence of 
curves of self-intersection less than $-1$, but it does rule out the worst of them.
In particular, Nagata's conjecture asserts that $d^2\geq nm^2$ must hold
when $n\geq10$, where $m=(m_1+\cdots+m_n)/n$. Thus perhaps there are curves
with $d^2-m_1^2-\cdots-m_n^2<0$, such as the $(-1)$-curves mentioned above,
but $d^2-m_1^2-\cdots-m_n^2$ is (conjecturally) only as negative 
as is allowed by the condition that after averaging the multiplicities $m_i$ 
for $n\geq 10$ one must have $d^2-nm^2\geq 0$. 

What our results here show is that in order to prove Nagata's Conjecture for all $n\geq 10$
it is enough to prove it only for $n<90$, if one can verify a slightly stronger conjecture for $n=10$.
But what we hope is to persuade the reader that it satisfies Hilbert's criteria of being both 
enticing and challenging, and at least not
\emph{completely} inaccessible!

\section{Linear systems on general blow-ups of the plane}

\subsection{Generalities} Fix $n$ points $x_1,\ldots, x_n$ in the complex projective plane $\bbP^2$ (which
will be often assumed to be in \emph{very general} position and called \emph{general}) and 
nonnegative  integers  $d,m_{1},\ldots ,m_{n}$.
We denote by $\mathcal{L}(d;m_{1},\ldots, m_{n})$, 
or simply by $(d;m_{1},\ldots, m_{n})$, 
the linear system of plane curves of \emph{degree} $d$
having \emph{multiplicity} at least $m_{i}$ at the \emph{base point}  $x_i$, for $1\le i\le n$. 
Often we will use exponents to denote repetition of multiplicities.
Sometimes, we may simply denote $\mathcal{L}(d;m_{1},\ldots, m_{n})$
by $\cL$. 

The linear system $(d;m_{1},\ldots, m_{n})$ is the projective space
corresponding to the vector subspace 
$\fp_d\subset H^0(\cO_{\bbP^2}(d))$, 
and $\fp=\oplus_{i=0}^n \fp_d$ is the homogeneous ideal, in 
the coordinate ring $S=\oplus_{i=0}^n
H^0(\cO_{\bbP^2}(d))$ of $\bbP^2$,  of the \emph{fat points scheme}  $\sum_{i=1}^nm_ix_i:= 
\operatorname{Proj}(S/\fp)$.

The \emph{expected dimension} of $(d;m_{1},\ldots, m_{n})$  is
$e({d}; m_{1}\ldots, m_{n})=
max\left\{-1, v({d}; m_{1}\ldots, m_{n})\right\}$
where
$$v({d}; m_{1}\ldots, m_{n})=
 \frac{d(d+3)}{2}-\sum_{i=1}^n\frac{m_i(m_i+1)}{2}$$
is the \emph{virtual dimension} of the system. The system is said to be \emph{special}
if 
$$h({d}; m_{1}\ldots, m_{n})>e({d}; m_{1}\ldots, m_{n})$$
where $h({d}; m_{1}\ldots, m_{n})$ is its \emph{true dimension}. 
In particular, an empty linear system is never special.

We record the following definitions:
\begin{itemize}
\item  $(d;m_{1},\ldots, m_{n})$ is  \emph{asymptotically non-special} (ANS) if there is an integer $y$ such that for all nonnegative integers $x\ge y$ the system $(xd;xm_{1},\ldots, xm_{n})$ is non-special;

\item the \emph{multiplicities vector} $(m_{1}\ldots, m_{n})$ of nonnegative integers is \emph{stably non-special} (SNS) if for all positive integers $d,x$ the linear system $(d;xm_{1},\ldots, xm_{n})$ is non-special.
\end{itemize}

Consider the \emph{Cremona--Kantor (CK) group} $\mathcal G_n$ generated by quadratic transformations based at $n$ general points
$x_1,\ldots, x_n$ of the plane and by permutations of these points (see \cite {dv}).  The group $\mathcal G_n$
 acts on the set of linear systems of the type $(d;m_{1},\ldots, m_{n})$. All systems in the same \emph{(CK)-orbit} (or \emph{(CK)-equivalent}) have the same expected, virtual and true dimension.  
A linear system $(d;m_{1},\ldots, m_{n})$ is \emph{Cremona reduced} if it has minimal degree in its {(CK)-orbit}.  
We note that (CK)-orbits need not contain a Cremona reduced element if  they are orbits of  empty linear systems, but orbits of non-empty linear systems always contain Cremona reduced members.
It is a classical result, which goes back to Max Noether (see, e.g.,  \cite {CaCi}), that a non--empty system $(d;m_{1},\ldots, m_{n})$ with general base points  is Cremona reduced if and only if  the sum of any pair or triple of distinct multiplicities does not exceed $d$. In this case the system is  called \emph{standard}  
and we may assume $m_1\ge \ldots\ge m_n$. 

\subsection{General rational surfaces}
Consider the blow-up $f: X_n\to \bbP^2$ of the plane at $x_1,\ldots, x_n$, which  we call a \emph{general rational surface}. The \emph{Picard group} $\Pic (X_n)$ is the abelian group freely generated by:
\begin {itemize}
\item the \emph{line class}, i.e.,  total transform $L=f^*(\cO_{\bbP^2}(1))$;
\item the classes of the  \emph{exceptional divisors} $E_1,\ldots, E_n$ which are contracted to $x_1,\ldots, x_n$.
\end{itemize}
More generally we may work in the $\bbR$-vector space $N_1(X_n)=\Pic(X_n)\otimes _\bbZ\bbR$. 

We will often abuse notation, identifying divisors on $X_n$ with the corresponding line bundles and  their classes in $\Pic (X_n)$, thus passing from additive to multiplicative notation. We will use the same notation for a planar linear system $\cL=(d;m_{1},\ldots, m_{n})$ and its \emph{proper transform} 
\[\cL=dL-\sum_{i=1}^n m_iE_i\]
on $X_n$. With this convention the integers $d,m_{1},\ldots, m_{n}$ are the components with respect to the ordered  basis $(L, -E_1,\ldots, -E_n)$ of $N_1(X_n)$.
The \emph{canonical divisor} on $X_n$ is $K_n=(-3; -1^n)$ (denoted by $K$ if there is no danger of confusion) and, if $(d;m_{1},\ldots, m_{n})$ is an ample line bundle on $X_n$, then $(d;m_{1},\ldots, m_{n})$ is ANS.

Using the intersection form on $N_1(X_n)$, one can \emph{intersect} and \emph{self-intersect} linear systems $(d;m_{1},\ldots, m_{n})$. Given a linear system $\cL=(d;m_{1},\ldots, m_{n})$, one has
\[v(d;m_{1},\ldots, m_{n})= \frac {\cL^2-\cL \cdot K}2\]
and,  if $d\ge 0$,  Riemann-Roch's theorem says that 
\begin{equation}\label{eq:spec}
\cL \;\text {is special if and only if}\; h^0(\cL)\cdot h^1(\cL)>0.
\end{equation}

\subsection{Special effects} Though \eqref {eq:spec} says that speciality is a cohomological property, the only known reason for speciality comes from geometry in the following way.

Assume we have an \emph{effective} linear system $\cL$, i.e. $h^0(\cL)>0$, and suppose there is an irreducible curve $C$ of arithmetic genus $g$ on $X_n$ such that:
\begin{itemize}
\item $h^2(\cL(-C))=0$, e.g. $h^0(\cL(-C))>0$;
\item $h^1(\cL_{\vert C}) >0$, e.g.  $\cL\cdot C\le g-1$ if $g\ge 2$ and  $\cL\cdot C\le g-2$ if $g\le 1$.
\end{itemize}
Then the \emph{restriction exact sequence}
\[ 0\to \cL(-C)\to \cL \to \cL_{\vert C}\to 0\]
implies that
\[ h^1(\cL)\ge h^1(\cL_{\vert C}) >0\]
hence $\cL$ is special. In this case $C$ is called a \emph{special effect curve} for $\cL$ (see \cite {boc}). For example,  
$C$ is a special effect curve for $\cL$ if $g=0$ and $\cL\cdot C\le -2$, in which case $C$ sits in the base locus of $\cL$. But then $C^2<0$ and therefore $C^2=-1$ if $x_1,\ldots, x_n$ are general points (see  \cite {dF1}). In this case $\cL$ is said to be \emph{$(-1)$-special} and there are plenty of them on $X_n$ as long as $n\geq 1$.

\subsection{The Segre--Harbourne--Gimigliano--Hirschowitz Conjecture}  The only known examples of special linear 
systems on a general rational surface $X_n$ are $(-1)$-special. This motivates the conjecture 
(see  \cite {Seg, Har, Gi, Hirsh, cmsegre}, quoted in chronological order):

\begin{conjecture}[Segre--Harbourne--Gimigliano--Hirschowitz (SHGH)]\label{conj:GHH}A linear system $\cL$ on $X_n$ is special if and only if it is  $(-1)$-special.
\end{conjecture}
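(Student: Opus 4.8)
We only sketch a plan of attack, the conjecture being a well-known open problem. The plan is to combine the Cremona normal form recalled above with the degeneration method of Ciliberto--Miranda. First I would reduce to standard systems: given a non-empty $\cL=(d;m_1,\ldots,m_n)$, use the Cremona--Kantor group $\mathcal G_n$ to replace $\cL$ by a Cremona-reduced, hence standard, representative, with $m_1\ge\cdots\ge m_n$ and $m_i+m_j+m_k\le d$ for all distinct $i,j,k$. All members of a (CK)-orbit share the same true and expected dimensions, and no $(-1)$-special system is standard (standardness readily implies $\cL\cdot E\ge 0$ for every $(-1)$-curve $E$, while $(-1)$-speciality requires $\cL\cdot E\le -2$ for some such $E$); hence the (SHGH) Conjecture is equivalent to the assertion that \emph{every non-empty standard linear system on a general $X_n$ satisfies $h^1=0$}.

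To prove this I would degenerate, following Ciliberto--Miranda (see also Ran): specialize $h$ of the $n$ general points so that $X_n$ becomes the general fibre of a flat family whose central fibre is a transverse union $Y'\cup_R Y''$ of two rational surfaces --- concretely a blown-up plane and a blown-up Hirzebruch surface $\mathbb F_1$ --- glued along a rational curve $R$, with $n-h$ general points on $Y'$ and the remaining $h$ points, together with the points cut out on $R$, on $Y''$. A line bundle on $X_n$ limits to a pair of line bundles on $Y'$ and $Y''$ agreeing along $R$, and the cohomology of the limit sits in an exact sequence relating the \emph{transversal system} (restriction to $Y''$) to the \emph{kernel system} on $Y'$ (sections vanishing along $R$). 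Both carry strictly smaller numerical data --- fewer base points, or smaller degree after a further Cremona reduction --- so one sets up a double induction on $n$ and on $d$ (or on $\sum_i m_i$): choose $h$ and the numerical splitting so that both the transversal and the kernel system are again standard, apply the inductive hypothesis to each, verify that they glue along $R$ with no cokernel, and conclude by semicontinuity that $h^1$ of the general fibre $\cL$ vanishes. The base of the induction is classical: $n\le 9$, where $-K_n$ is nef and effective; systems of very small degree; and systems with at most two base points.

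The step I expect to be the genuine obstacle --- and the reason the conjecture is still open --- is carrying this induction out \emph{uniformly in $d$ and the multiplicities}. The splitting data (how many points $h$ to specialize, and the bidegree carried by the Hirzebruch component) must be chosen so that \emph{both} pieces are non-special \emph{and} they glue exactly, and no rule is known that provably achieves this for all standard $(d;m_1,\ldots,m_n)$ at once. When the multiplicities are bounded --- the original Ciliberto--Miranda theorem for $m_i\le 12$, pushed further by computer-assisted work of Dumnicki and others --- or when $n$ is small, the combinatorics of the transversal systems (which involve unions of lines through prescribed subsets of the specialized points) stays under control and a finite verification suffices; in general the case analysis branches without bound. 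Completing the proof would seem to require either a cohomological vanishing theorem that bypasses the case-by-case gluing, or a genuinely different degeneration whose numerical invariants decrease without such branching.
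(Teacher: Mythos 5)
The statement you were asked to prove is Conjecture \ref{conj:GHH}, and it is precisely that: a conjecture, which the paper does not prove and which remains open. The paper itself says so explicitly (``The general conjecture remains open''), recalling only the special cases $n\le 9$ due to Castelnuovo and the equivalent reformulation in terms of standard systems. You correctly recognize this and offer a plan of attack rather than a proof, which is the honest thing to do; no referee could fault you for not settling (SHGH).

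As for the plan itself: it is consistent with the circle of ideas the paper works in. The reduction to standard systems via the Cremona--Kantor group, using the fact that $(-1)$-special systems are never standard, is exactly the paper's equivalent reformulation of (SHGH), and the degeneration you describe --- specializing points onto a central fibre $V\cup_R Z$ with $V\cong\mathbb P^2$ and $Z\cong\mathbb F_1$, analysing the kernel and transversal systems and gluing along $R$ --- is the Ciliberto--Miranda degeneration of \S5.2 of the paper, including the $2$-throw refinement. The paper uses this machinery not to attack (SHGH) directly but to establish non-effectivity and non-speciality of carefully chosen families (Propositions 5.2.6--5.2.8), precisely because, as you say, there is no known uniform rule for choosing the splitting so that both halves remain standard, are non-special by induction, and glue with no cokernel. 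Your diagnosis of where the induction breaks down --- the unbounded branching of the combinatorics of the matching along $R$ as $d$ and the multiplicities grow --- matches the actual state of the art. Two small cautions: (1) the reduction to standard systems requires the system to be non-empty, and (SHGH) also constrains empty systems (a system is never special if empty, but one must still rule out unexpected non-emptiness, which is where the non-effectivity arguments of the paper enter); your sketch should make explicit that the non-effectivity half of the conjecture is also part of what must be proved. (2) The claim ``standardness readily implies $\cL\cdot E\ge 0$ for every $(-1)$-curve $E$'' is true but not entirely trivial; it relies on Harbourne's analysis of the action of the Weyl group on the effective cone, which you should cite rather than assert. With those caveats, your assessment is accurate: this is an open problem, your outline is the standard approach, and the obstacle you name is the real one.
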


It goes back to Castelnuovo
that Conjecture \ref{conj:GHH} holds if $n \leq 9$ (see \cite{castelnuovo}; 
more recent treatments  can be found in \cite{Nag59, Gi, Har, Har85}). 
The general conjecture remains open.

Since standard linear systems are not  $(-1)$-special (see \cite{Har85, Hirsh}), 
an equivalent formulation of the (SHGH) conjecture is: \emph {a standard system of plane curves with general base points is not special}.

Recall that a linear system $\cL$ is  \emph{nef} if $\cL\cdot \cL'\ge 0$ for all effective $\cL'$. 
Since the (CK)-orbit of a nef divisor always contains a Cremona reduced element and hence a standard elementz
(see \cite{Har85}), 
the (SHGH) conjecture implies the following conjecture, which we regard as  a weak form
of (SHGH), a point-of-view justified by Proposition \ref{prop:impl}(ii) below.

\begin{conjecture} \label{conj:LU}  A \emph{nef} linear system $\cL$ on $X_n$ is not special.
\end{conjecture}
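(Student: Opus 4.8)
The plan is to reduce to \emph{standard} systems and then to establish the vanishing of $h^1$ by a degeneration argument. Given a nef linear system $\cL$ on $X_n$, its (CK)-orbit contains a Cremona reduced, hence standard, representative $\cL'$; since (CK)-equivalent systems share the same true, virtual and expected dimensions, $\cL$ is special if and only if $\cL'$ is. Hence it suffices to prove the statement for a standard $\cL=(d;m_1,\ldots,m_n)$, which is exactly the reformulation ``a standard system is not special'' recorded after Conjecture \ref{conj:GHH}; in particular this confirms that, up to this reduction, Conjecture \ref{conj:LU} is equivalent to (SHGH) (compare Proposition \ref{prop:impl}(ii)). We may assume $\cL$ is effective (an empty system is never special); then $h^2(\cL)=h^0(K_n-\cL)=0$, because $K_n-\cL$ has $L$-degree $-3-d<0$, so Riemann--Roch reduces the whole problem to showing $h^1(X_n,\cL)=0$.

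For $n\le 9$ the vanishing is classical and follows from Castelnuovo's theory of rational surfaces (see the references given after Conjecture \ref{conj:GHH}), using that $-K_n$ is nef. The difficulty lies entirely in $n\ge 10$, where $-K_n$ is not even effective and no anticanonical vanishing theorem applies. There I would argue by a degeneration of Ciliberto--Miranda type: realise $X_n$ as the general fibre of a flat family over a disc whose central fibre is a transverse union $X_0=Y_1\cup_R Y_2$ of two rational surfaces along a rational curve $R$ (for instance a plane glued to a Hirzebruch surface along a line), distribute the $n$ points and their multiplicities among $Y_1$ and $Y_2$, and choose a flat limit $\cL_0$ of $\cL$ with $\cL_0|_{Y_j}=\cL_j$. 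Upper semicontinuity of $h^1$ gives $h^1(X_n,\cL)\le h^1(X_0,\cL_0)$, and a Mayer--Vietoris computation bounds $h^1(X_0,\cL_0)$ by the $h^1(Y_j,\cL_j)$ together with the corank of the restriction map $H^0(\cL_1)\oplus H^0(\cL_2)\to H^0(\cL_0|_R)$; so it is enough to choose the degeneration so that each $\cL_j$ --- a ``smaller'' system, on a surface with fewer points or on a rational scroll --- is non-special and the gluing along $R$ has maximal rank. Iterating, one organises an induction on $n$ and on $d$ terminating either at $n\le 9$ or at linear systems on $\bbP^1$-bundles, which are computed directly. A companion tool is the Horace method: specialise $k$ of the points onto a line $\ell$ and use $0\to\cL(-\ell)\to\cL\to\cL|_\ell\to 0$ with $\cL|_\ell$ a line bundle on $\bbP^1$, recursing on the residual $\cL(-\ell)$.

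The main obstacle is to make any such scheme terminate while keeping all the intermediate systems non-special. For $n\ge 10$ the nef cone of $X_n$ is not polyhedral and $X_n$ carries infinitely many $(-1)$-curves, so there is no a priori numerical bound forcing the degeneration to simplify; and, worse, after degenerating the surface or specialising points the limit system $\cL_0$ typically acquires base components along $R$ (or along $\ell$) whose precise contribution to $h^1$ --- the ``matching'' or ``transversality'' condition along the double curve --- is exactly what one cannot control in general. This is the obstruction that has blocked all known approaches to (SHGH), and it is the reason Conjecture \ref{conj:LU} is still open for every non-square $n\ge 10$: a proof in full generality would require a genuinely new idea. What the method does yield rigorously is the statement for infinite, explicitly describable families of data $(n;d;m_1,\ldots,m_n)$; and, as developed in the rest of this paper, a suitably strong form of the conjecture for $n=10$ can then be bootstrapped, again by degeneration, to cover all $n\ge 90$ once the cases $n<90$ are known.
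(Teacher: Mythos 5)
The statement you are trying to prove is Conjecture \ref{conj:LU}, not a theorem: the paper states it without proof and explicitly records that it ``is also open in general'' (it is known only for $n\le 9$, via Castelnuovo's classical results, and for special families of systems). Your text, read honestly, is not a proof either --- it is a program. You assemble the standard toolkit (reduction modulo the Cremona--Kantor group, semicontinuity on a Ciliberto--Miranda degeneration $X_0=Y_1\cup_R Y_2$, the Horace method), and then you yourself identify the fatal gap: for $n\ge 10$ there is no argument that makes the recursion terminate while controlling the matching/transversality condition along the double curve $R$, i.e.\ the corank of $H^0(\cL_1)\oplus H^0(\cL_2)\to H^0(\cL_0|_R)$ for the limit systems that actually arise. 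Since that step is exactly what is missing, nothing in the proposal establishes $h^1(X_n,\cL)=0$ for a general nef $\cL$, and the conjecture remains open after your argument just as before it.

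One further point needs correcting. Your opening reduction shows that if every \emph{standard} system with general base points is non-special, then Conjecture \ref{conj:LU} holds; but ``standard systems are non-special'' is an equivalent formulation of the full (SHGH) Conjecture \ref{conj:GHH}, which is strictly stronger in the sense that it also encodes the negativity statement (Conjecture \ref{NegativityConj}/(SN)). So this step reduces the weak conjecture to a stronger open one, and your parenthetical claim that Conjecture \ref{conj:LU} is thereby ``equivalent'' to (SHGH) is not what Proposition \ref{prop:impl}(ii) says: the equivalence requires (SN) in addition. If you want to work only with nef classes, the reduction to a standard representative is legitimate (a nef class is CK-equivalent to a standard one), but the converse direction is lost, and in any case the reduction does not bring you closer to the vanishing of $h^1$, which is the genuine content of the conjecture.
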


This conjecture is also open in general. 

The notion of nefness extends to elements in  $N_1(X_n)$ and 
$\xi\in N_1(X_n)$ is nef if and only if $\lambda\xi$ is nef for all $\lambda>0$. Given a nonzero $\xi\in N_1(X_n)$, the set $[\xi]=\{\lambda\xi: \lambda>0\}$ is called  the \emph{ray} generated by $\xi$. Thus it makes sense to talk of \emph{nef rays}. 

\section{Hilbert's 14-th problem and Nagata's conjecture}

\subsection {Hilbert's 14-th problem} Let $k$ be a field, let $t_1,\ldots, t_n$ be indeterminates over $k$ and let
$\bbK$ be an \emph{intermediate field} between $k$ and $k(t_1,\ldots, t_n)$, i.e.
\[ k\;\subseteq\; \bbK\; \subseteq\; k(t_1,\ldots, t_n).\]

Hilbert's 14-th problem asks: \emph{is $\bbK\cap k[t_1,\ldots, t_n]$ a finitely generated $k$-algebra?}

Hilbert had in mind the following situation coming from invariant theory. Let $G$ be a subgroup of the \emph{affine group}, i.e. the group of automorphisms of  $\bbA_k^n$. Then $G$ acts as a set of automorphisms of the $k$-algebra $k[t_1,\ldots, t_n]$, hence on $k(t_1,\ldots, t_n)$,   and we let $\bbK=k(t_1,\ldots, t_n)^G$ be the field of $G$-invariant elements. Then the question is: \emph {is
\[ k[t_1,\ldots, t_n]^G=\bbK \cap k[t_1,\ldots, t_n]\]
a finitely generated $k$-algebra?}

In \cite {Nag59}, Nagata provided counterexamples to the latter formulation of Hilibert's problem. To do this he used the nefness 
of a certain line bundle of the form $(d;m_1,\ldots, m_n)$ (see \S \ref {ss:nagata} below). 

Hilbert's problem has trivially an affirmative answer in the case $n=1$. The answer is also affirmative for $n=2$, as proved by Zariski in \cite{zar}. Nagata's \emph{minimal counterexample} has $n=32$ and $\dim(G)=13$. Several other counteraxamples have been given by various authors, too long a story to be reported on here. The most recent one is due to Totaro and has $n=3$ (see \cite {tod}).

\subsection{Nagata's Conjecture}  \label{ss:nagata} In his work on Hilbert's 14-th problem, Nagata made the following conjecture:

\begin{conjecture}[Nagata's Conjecture (N)]\label{conj:nagata}  If $n\ge 9$ and $(d;m_1,\ldots,m_n)$ is an effective linear system on $X_n$, then 
\begin{equation}\label{eq:nagata}
\sqrt n\cdot d\ge m_1+\cdots +m_n
\end{equation}
and strict inequality holds if $n\ge 10$. 
\end{conjecture}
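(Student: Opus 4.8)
The plan is to reduce the conjecture to the nefness of a single ray in $N_1(X_n)$ and then attack that by degeneration. The case $n=9$ is not part of the real difficulty: for general points the unique plane cubic through $x_1,\ldots,x_9$ is irreducible, so its proper transform $-K_9=(3;1^9)$ is an irreducible curve of self-intersection $0$ and hence nef; thus $-K_9\cdot(d;m_1,\ldots,m_9)=3d-(m_1+\cdots+m_9)\ge 0$, which is \eqref{eq:nagata} for $n=9$. So fix $n\ge 10$. Since $h^0$ is semicontinuous it suffices to treat a very general configuration, and such a configuration simultaneously minimizes $h^0$ for the countably many systems $(d;m_1,\ldots,m_n)$; hence effectivity of $(d;m_1,\ldots,m_n)$ forces effectivity of every relabeling $(d;m_{\tau(1)},\ldots,m_{\tau(n)})$, $\tau\in S_n$, and summing representatives over $\tau$ produces an effective class $\big(n!\,d;\,((n-1)!\,S)^n\big)$, $S=m_1+\cdots+m_n$, with all multiplicities equal. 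Thus it is enough to prove the \emph{homogeneous} case: for $n\ge 10$ very general points, an effective class $(a;b^n)$ satisfies $a>\sqrt n\,b$. Equivalently, the ray $\big[\sqrt n\,L-\sum_{i=1}^n E_i\big]$, which has self-intersection $0$, lies outside the pseudoeffective cone and is hence nef --- a ``good ray'' (equivalently: the multi-point Seshadri constant of $\bbP^2$ at $n\ge 9$ very general points equals $1/\sqrt n$).

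Next I would dispose of the square case $n=k^2$ --- Nagata's theorem --- by a grid degeneration. Specialize $x_1,\ldots,x_{k^2}$ to the $k\times k$ grid cut out by $k$ ``vertical'' and $k$ ``horizontal'' general lines; this configuration lies on the pencil $P$ spanned by the product of the $k$ vertical lines and the product of the $k$ horizontal lines, so every member of $P$ is a degree-$k$ curve through all $k^2$ points with multiplicity one there, and a general member $G\in P$ is irreducible (indeed smooth). If $(a;b^{k^2})$ with $a<kb$ were effective at very general points, semicontinuity of $h^0$ would give an effective curve $C$ of that class on the blow-up of the grid; since $C\cdot G=ak-k^2b=k(a-kb)<0$ and $G$ is irreducible, $G$ is a component of $C$, and $C-G$ is an effective class $(a-k;(b-1)^{k^2})$ with $(a-k)-k(b-1)=a-kb<0$ again. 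Iterating $b$ times yields an effective class of negative degree and trivial exceptional part, which is impossible; hence $a\ge kb=\sqrt n\,b$. Upgrading to the \emph{strict} inequality for $k\ge 4$ amounts to showing $(jk;j^{k^2})$ is empty at very general points for every $j\ge 1$; this needs Nagata's more delicate analysis but is again of degeneration type.

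The genuine difficulty --- and the reason the conjecture is open for every non-square $n\ge 10$ --- is to bridge $n$ and the nearest square. No monotonicity is available: effectivity of $(a;b^n)$ at $n$ very general points tells us nothing about $(a;b^N)$ for a larger square $N$, since one cannot impose multiplicity $b$ at $N-n$ further very general points without raising the degree. The plan is therefore to degenerate in a way adapted to $n$ itself --- specializing the $n$ points onto a plane curve of small degree, or, more in the spirit of this paper, degenerating $\bbP^2$ into a reducible surface --- so that the limit linear system breaks into pieces supported on smaller or more symmetric configurations, ideally reducing to the square case or to $n\le 9$. The main obstacle is to control these limits: one must track how the degree and the multiplicities redistribute among the components of the central fibre and, above all, show that the limit system has the expected dimension, i.e.\ that no ``matching'' or transversality obstruction appears along the degeneration --- precisely the kind of obstruction that governs speciality in Conjecture \ref{conj:GHH}. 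No degeneration is currently known to close this gap in general; what the present paper extracts from such degenerations is the weaker but still substantial conclusion that a good ray exists for every $n\ge 10$, rather than the full equality.
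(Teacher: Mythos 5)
The statement you were asked to prove is labeled a \emph{conjecture} in the paper, and the paper does not prove it --- the entire article is devoted to variants, partial results, and implications surrounding it. Your proposal correctly recognizes this. The reductions you do supply are sound: the $n=9$ case follows because $-K_9=(3;1^9)$ is the class of an irreducible curve of self-intersection $0$, hence nef; the averaging reduction to the homogeneous case $(a;b^n)$ via summing over $S_n$-relabelings at a very general configuration is a standard and correct device (one only needs that very general points simultaneously minimize $h^0$ for the countably many classes involved); and the grid degeneration for $n=k^2$ is precisely Nagata's original argument, with the strict inequality for $k\ge 4$ requiring his more delicate emptiness analysis, as you note. You then correctly identify the non-square case $n\ge 10$ as the genuine open problem and correctly describe the paper's contribution as the existence of \emph{good rays} (rational non-effective, hence nef, rays on $\partial\cQ_n$) rather than nefness of the Nagata ray $\nu_n$ itself.

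Two small points worth tightening if you expand this: in the grid step, the irreducibility of a general member of the pencil of degree-$k$ curves through the lattice points deserves a line (Bertini away from the base locus, plus smoothness at the base points because the two generating totally-reducible curves have distinct tangent directions there); and when you pass to the nef-ray formulation you should say explicitly that a non-effective rational ray on $\partial\cQ_n$ is nef because its Zariski decomposition forces the negative part to vanish. The paper proves exactly this last fact as a lemma, and its positive results relative to (N) are the existence of good rays for all $n\ge 10$ and the reduction of (N) to $n\le 89$ under the stronger conjecture for $n=10$ --- all of which you have placed correctly.
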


Using a degeneration argument, Nagata proved the following result, on 
which his counterexamples to  Hilbert's 14-th problem rely:

\begin{proposition} \label{prop:nagata} (N) holds if $n=k^2$, with $k\ge 3$. 
\end{proposition}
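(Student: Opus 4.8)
The plan is to prove that (N) holds when $n=k^2$ by exhibiting, for each such $n$, an effective linear system whose existence would be contradicted by a failure of \eqref{eq:nagata}, and using a specialization/degeneration of the $n$ general points to a special configuration where multiplicities can be counted directly. Concretely, suppose for contradiction that there is an effective system $\cL=(d;m_1,\ldots,m_n)$ on $X_n$ with $\sqrt n\, d = k d < m_1+\cdots+m_n$. By replacing $\cL$ with a suitable multiple (this does not affect the validity of \eqref{eq:nagata}, since both sides scale linearly) and by perturbing we may assume all $m_i$ equal to a common value $m$, so the hypothesis becomes $kd<km^2$, i.e. $d<km$; the goal is to show $\cL=(d;m^{k^2})$ is then empty for general points.

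The key geometric input is a degeneration of the $k^2$ general points to the $k^2$ points of a $k\times k$ square grid, i.e. the complete intersection of two pencils of $k$ lines each. First I would observe that if the linear system $(d;m^{k^2})$ is nonempty for $k^2$ \emph{general} points, then by semicontinuity it is nonempty for \emph{any} specialization of the points, in particular for the grid configuration: a degree-$d$ plane curve $C$ passing through each grid point with multiplicity $\ge m$ exists. Now I would bound the degree of such a $C$ from below. Intersecting $C$ with each of the $k$ vertical grid lines $\ell_j$: the line $\ell_j$ meets $C$ in each of the $k$ grid points on it with multiplicity $\ge m$, so if $\ell_j\not\subset C$ then $d=C\cdot\ell_j\ge km$, contradicting $d<km$; hence $\ell_j\subset C$ for every $j$. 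The same argument with the horizontal lines forces all $k$ of them into $C$ as well, so $C$ contains all $2k$ grid lines, whence $d\ge 2k$ and, peeling off the $2k$ lines, the residual curve $C'$ has degree $d-2k$ and still passes through each grid point with multiplicity $\ge m-2$ (each grid point lies on exactly one horizontal and one vertical line removed). Iterating this peeling argument $\lceil m/2\rceil$-ish times drives the multiplicity to zero while forcing $d\ge 2k\lceil m/2\rceil\ge km$, contradiction; one must be a little careful with parity, which is why the natural formulation runs the induction on $(d,m)$ and shows $d\ge km$ whenever $(d;m^{k^2})$ is nonempty on the grid — equivalently, the nonemptiness of $\cL$ forces $d\ge \sqrt n\, m$, i.e. \eqref{eq:nagata}.

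After reducing to equal multiplicities and passing to a multiple, I would therefore state and prove the clean lemma: if a plane curve of degree $d$ passes through all $k^2$ points of a $k\times k$ grid with multiplicity $\ge m$ at each, then $d\ge km$; this is the heart of the matter and is handled by the line-peeling induction sketched above. Combined with semicontinuity of $h^0$ under specialization of the base points, and with the homogeneity of \eqref{eq:nagata} under $x\mapsto(xd;xm_1,\ldots,xm_n)$ (which lets one clear denominators and average the $m_i$), this yields $\sqrt n\, d\ge \sum_i m_i$ for every effective $(d;m_1,\ldots,m_n)$ on $X_n$ with $n=k^2$, $k\ge3$. The strict inequality for $n\ge 10$ claimed in Conjecture \ref{conj:nagata} is a separate refinement and is not needed for Proposition \ref{prop:nagata} as stated; one could note that it would follow by a slightly more careful version of the peeling (or is established elsewhere), but I would not pursue it here.

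The main obstacle I anticipate is the reduction step, not the grid computation: turning an arbitrary effective system with unequal multiplicities $m_1\ge\cdots\ge m_n$ into one with equal multiplicities. The honest route is not to literally equalize the $m_i$ but to run the specialization with the \emph{given} multiplicities against a grid whose points can be arranged (after taking the $x$-th symbolic power, i.e. replacing $m_i$ by $xm_i$ and letting $x\to\infty$ to approximate the average $m=(\sum m_i)/n$ by integers) so that the counting still goes through; equivalently one works with $\bbR$-divisors and limits. Making this limiting/averaging argument rigorous — ensuring the specialized curve genuinely acquires the required multiplicities and that the integrality issues in $\lceil m/2\rceil$ wash out in the limit — is the delicate point, whereas the combinatorics of peeling off the $2k$ grid lines is essentially immediate once the setup is fixed.
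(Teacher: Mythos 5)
Your grid-specialization and line-peeling argument is close in spirit to Nagata's original degeneration in \cite{Nag59} (which the paper cites rather than reproves), and as written it correctly establishes the \emph{non-strict} inequality: if $(d;m^{k^2})$ is effective for general points then $d\ge km$, since semicontinuity pushes effectivity to the grid, the $2k$ grid lines are forced to be components whenever $d<km$, and the induction on $m$ closes. The reduction from unequal $m_i$ to equal ones should, however, not be done ``by perturbing'' (which need not preserve effectivity): since the base points are general, every permutation $(d;m_{\sigma(1)},\ldots,m_{\sigma(n)})$ of the multiplicities is effective, so summing one curve from each of the $n$ cyclic shifts shows $(nd;(\sum_i m_i)^n)$ is effective; the equal-multiplicity case then gives $k\cdot nd\ge n\sum_i m_i$, i.e.\ $kd\ge\sum_i m_i$, with no limiting process and no $\lceil m/2\rceil$ parity issues.

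The genuine gap is the strict inequality. Conjecture \ref{conj:nagata} demands $\sqrt n\, d>\sum_i m_i$ for $n\ge 10$, so Proposition \ref{prop:nagata} for $k\ge 4$ (i.e.\ $n\ge 16$) requires $kd>\sum_i m_i$; your assertion that strictness ``is not needed for Proposition \ref{prop:nagata} as stated'' is therefore incorrect, and in fact strictness is the whole point of Nagata's result --- it is exactly what feeds the non-finite-generation argument, as one sees in the proof of Theorem \ref{thm:bignag1}, where the strict bound $\alpha(\fa_m)/m>\delta$ coming from a good ray is what makes \eqref{eq;clain} work. Moreover, no refinement of your grid peeling can recover it: on the grid, $(km;m^{k^2})$ is honestly nonempty (take $m$ copies of each of the $k$ vertical lines), so the grid configuration is structurally incapable of detecting $d>km$. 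Nagata obtains strictness by a more delicate specialization (moving batches of $k$ points onto lines one at a time and analyzing the forced components inductively), and supplying that extra mechanism, or some substitute for it, is the step your proposal is missing.
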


Taking this into account, it is clear that  (N) is equivalent to saying 
that the \emph{Nagata class} $N_n=(\sqrt n,1^n)$, or the \emph{Nagata ray} 
$\nu_n=[\sqrt n,1^n]$ it generates, is nef if $n\ge 9$. Note that 
it suffices to verify (N) for linear systems containing prime divisors.

Let $C$ be an irreducible curve of genus $g$ on $X_n$. If (SHGH) holds, then 
the virtual dimension of $\cOO_{X_n}(C)$ is nonnegative, which reads
\begin{equation}\label{eq:Snag} C^2\ge g-1.\end{equation}
In particular, (SHGH) implies the following conjecture:

\begin{conjecture}\label{NegativityConj}
If $C$ is a prime divisor on $X_n$, then $C^2\geq-1$ with $g=0$ when $C^2=-1$.
\end{conjecture}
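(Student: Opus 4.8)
The plan is to establish the chain of implications $\text{(SHGH)} \Rightarrow \eqref{eq:Snag} \Rightarrow \text{Conjecture \ref{NegativityConj}}$, treating each arrow in turn. First I would recall that, by Riemann--Roch on the smooth surface $X_n$ together with Serre duality, any irreducible curve $C$ of arithmetic genus $g$ satisfies
\[
h^0(\cOO_{X_n}(C)) - h^1(\cOO_{X_n}(C)) + h^0(\cOO_{X_n}(K_n - C)) = \frac{C^2 - C\cdot K_n}{2} + 1 = v(\cOO_{X_n}(C)) + 1,
\]
where I use the identity $v(\cL) = (\cL^2 - \cL\cdot K_n)/2$ already recorded in the excerpt, and the adjunction formula $C\cdot(C+K_n) = 2g - 2$. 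Since $C$ is an \emph{irreducible} curve on $X_n$, it is in particular effective, so $h^0(\cOO_{X_n}(C)) \ge 1$. Now (SHGH) (Conjecture \ref{conj:GHH}) asserts that $\cOO_{X_n}(C)$ is special only if it is $(-1)$-special; and a $(-1)$-special system has an irreducible $(-1)$-curve $C'$ in its base locus with $\cOO_{X_n}(C)\cdot C' \le -2$, which for the irreducible curve $C$ itself would force $C = C'$ up to the fixed component, an impossibility unless $C$ is already that $(-1)$-curve. So for an irreducible $C$ that is not a $(-1)$-curve, $\cOO_{X_n}(C)$ is non-special, meaning $h^1(\cOO_{X_n}(C)) = 0$, whence the Riemann--Roch formula gives $v(\cOO_{X_n}(C)) + 1 = h^0(\cOO_{X_n}(C)) + h^0(\cOO_{X_n}(K_n - C)) \ge 1$, i.e. $v(\cOO_{X_n}(C)) \ge 0$, which unwinds via adjunction to $C^2 \ge g - 1$ — this is precisely \eqref{eq:Snag}. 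The separate case where $C$ is itself a $(-1)$-curve is immediate: then $C^2 = -1 = g - 1$ since such a curve is rational ($g = 0$).

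Next I would derive Conjecture \ref{NegativityConj} from \eqref{eq:Snag}. For a prime divisor $C$ on $X_n$, arithmetic genus is always nonnegative, $g \ge 0$, so \eqref{eq:Snag} gives $C^2 \ge g - 1 \ge -1$. It remains to check that equality $C^2 = -1$ forces $g = 0$: if $C^2 = -1$ then $g - 1 \le C^2 = -1$ gives $g \le 0$, hence $g = 0$. That completes the argument that (SHGH) implies Conjecture \ref{NegativityConj}. One subtlety I would flag carefully is the handling of the $h^0(\cOO_{X_n}(K_n - C))$ term: for $n \ge 1$ and $C$ not equal to (a component of) an anticanonical divisor this term vanishes, but in fact I do not need it to vanish — I only use $h^0 \ge 0$ — so no case analysis is required there, which streamlines the write-up.

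The step I expect to be the genuine obstacle, or at least the one requiring the most care, is making the reduction from (SHGH) to ``$h^1 = 0$ for irreducible non-$(-1)$-curve classes'' fully rigorous. The definition of $(-1)$-special refers to a system $\cL$ having an irreducible $(-1)$-curve $C'$ with $\cL\cdot C' \le -2$ sitting in its base locus, as recalled in the excerpt's discussion of special effect curves. When $\cL = \cOO_{X_n}(C)$ for $C$ itself prime, the base locus cannot contain a \emph{different} prime divisor $C'$ with $C\cdot C' < 0$ (two distinct prime divisors intersect nonnegatively), so the only way $\cOO_{X_n}(C)$ can be $(-1)$-special is $C = C'$, i.e. $C$ is itself the $(-1)$-curve. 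Spelling this out — and in particular confirming that ``$C$ in the base locus of $\cL$'' is the correct reading that makes $(-1)$-speciality of $\cOO_{X_n}(C)$ equivalent to $C^2 = -1$, $g = 0$ — is the crux. Once that is pinned down, the rest is the bookkeeping above. I would organize the proof as: (1) Riemann--Roch plus adjunction to restate $v \ge 0 \Leftrightarrow C^2 \ge g-1$; (2) the $(-1)$-curve case directly; (3) the non-$(-1)$-curve case via (SHGH) $\Rightarrow$ non-special $\Rightarrow$ $h^1 = 0$ $\Rightarrow$ $v \ge 0$; (4) conclude \eqref{eq:Snag}, then specialize $g \ge 0$ to get Conjecture \ref{NegativityConj}.
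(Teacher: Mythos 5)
The statement is an open conjecture, so the paper does not prove it; what the paper does (in the two sentences preceding it, yielding \eqref{eq:Snag}) is observe that (SHGH) implies it, and your proposal carries out exactly that derivation, filling in the Riemann--Roch and non-$(-1)$-speciality details the paper leaves implicit. Your route matches the paper's; the only quibble is that $\cO_{X_n}(C)$ is in fact never $(-1)$-special when $C$ is prime (even if $C$ is itself a $(-1)$-curve, since then $\cO_{X_n}(C)\cdot C=-1\not\le -2$), so your separate treatment of the $(-1)$-curve case, while harmless, is not actually needed.
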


\begin{lemma}\label{lem:impl} Conjecture \ref {NegativityConj} implies (N).
\end{lemma}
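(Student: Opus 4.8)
The plan is to reduce (N) to a statement about prime divisors and then combine Conjecture~\ref{NegativityConj} with the Cauchy--Schwarz inequality. Recall from the text that (N) for $X_n$ amounts to nefness of the Nagata class $N_n=(\sqrt{n};1^n)$, and that a class on a surface is nef precisely when it is nonnegative on every prime divisor; since $N_n\cdot C=\sqrt{n}\,d-\sum_{i=1}^n m_i$ for $C=(d;m_1,\ldots,m_n)$, it suffices to prove $\sqrt{n}\,d\ge\sum_i m_i$ whenever $C$ is the class of a prime divisor, with strict inequality when $n\ge 10$. (Decomposing an arbitrary effective divisor into its prime components and summing the corresponding inequalities then gives (N) in general, the strict version for $n\ge 10$ following because the decomposition is nonempty.) So let $C$ be a prime divisor. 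If $C=E_i$ for some $i$, then $d=0$ and $\sum_i m_i=-1<0=\sqrt{n}\,d$; otherwise $f(C)$ is a curve, so $d=C\cdot L\ge 1$, and $m_i=C\cdot E_i\ge 0$ for all $i$.

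Next I would invoke Conjecture~\ref{NegativityConj}, which gives $C^2\ge -1$, and split into two cases. If $C^2=-1$, then Conjecture~\ref{NegativityConj} also forces $g=0$, so adjunction gives $C\cdot K=2g-2-C^2=-1$, i.e. $-3d+\sum_i m_i=-1$; hence $\sum_i m_i=3d-1<3d\le\sqrt{n}\,d$ since $n\ge 9$, which is even the strict inequality. If $C^2\ge 0$, then $d^2\ge\sum_i m_i^2$, and Cauchy--Schwarz gives $\sum_i m_i\le\sqrt{n\sum_i m_i^2}\le\sqrt{n}\,d$, which is the desired non-strict inequality.

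The delicate point, which I expect to be the main obstacle, is ruling out the equality $\sum_i m_i=\sqrt{n}\,d$ when $n\ge 10$ and $C^2\ge 0$. Equality forces equality in both estimates above: all $m_i$ are equal, say to $m$, and $nm^2=d^2$; since $d\ge 1$ this forces $m\ge 1$, so $n=(d/m)^2$ is a perfect square. If $n$ is not a perfect square, this is already a contradiction. If $n$ is a perfect square, then $n\ge 10$ forces $n=k^2$ with $k\ge 4$, so Proposition~\ref{prop:nagata} applies and gives (N) for $X_n$ --- in particular the strict inequality $\sqrt{n}\,d>\sum_i m_i$, contradicting $\sum_i m_i=\sqrt{n}\,d$. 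Hence the inequality is strict whenever $n\ge 10$. (One could instead try to exclude the hypothetical prime divisor $(km;m^{k^2})$ directly from Conjecture~\ref{NegativityConj}, e.g. by a Riemann--Roch computation forcing $h^0\ge 3$ and an analysis of the resulting fibration, but invoking Proposition~\ref{prop:nagata} is cleaner since that result is unconditional.)
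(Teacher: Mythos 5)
Your proposal is correct and takes essentially the same approach as the paper: reduce (N) to the corresponding inequality for prime divisors, then combine Cauchy--Schwarz (when $C^2\ge 0$) with the adjunction computation for $(-1)$-curves (forced when $C^2<0$ by Conjecture~\ref{NegativityConj}). The paper casts it as a contradiction from a strict violation $\sqrt{n}\,d<\sum m_i$ and does not spell out the equality case for $n\ge 10$, which you dispose of cleanly via Proposition~\ref{prop:nagata}; otherwise the two arguments coincide step for step.
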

\begin{proof} 
Suppose $C$ is a prime divisor in $(d;m_1,\ldots,m_n)$  violating (N), i.e., $n\ge9$ and
$\sqrt n\cdot d< m_1+\cdots +m_n$. Then, for $m=(m_1+\cdots +m_n)/n$
and using Cauchy--Schwartz inequality  $m^2\leq (m_1^2+\cdots+m_n^2)/n$, we have
$$d^2<n\frac{(m_1+\cdots +m_n)^2}{n^2}=nm^2\leq m_1^2+\cdots+m_n^2.$$
Thus $C^2<0$ and therefore $C^ 2=-1$ and $C$ has genus $0$  by Conjecture \ref{NegativityConj}. But now we have the contradiction
$1=-C\cdot K_n=3d-(m_1+\cdots+m_n)\leq \sqrt{n}d-(m_1+\cdots+m_n)<0$.\end{proof}

Hence Conjecture \ref{NegativityConj} can be regarded as a strong form of (N). 
The aforementioned result in \cite {dF1} (that $C^2\geq -1$ if $C\subset X_n$ is irreducible and rational)
yields \eqref {eq:Snag} if $g=0$. If $g=1$ and $C^2=0$, then 
$C$ is (CK)-equivalent to $(3;1^9, 0^{n-9})$ (see \cite {cmsegre}). 
Thus the following conjecture (see  \cite[Conjecture 3.6]{GHI}) is at least plausible.

\begin{conjecture} [Strong Nagata's Conjecture (SN)] If $C$ is an irreducible curve of genus $g>0$ on $X_n$, then $C^2>0$ unless $n\ge 9$, $g=1$ and $C$ 
is (CK)--equivalent to $(3;1^9, 0^{n-9})$, in which case $C^2=0$. 
\end{conjecture}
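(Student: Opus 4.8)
The plan is to separate (SN) into the classification facts already cited and a residual positivity statement that I will deduce from (SHGH), leaving unconditional only the range covered by del~Pezzo theory and, in principle, by Nagata-type degenerations. First I would note that for an irreducible curve $C$ of class $(d;m_1,\dots,m_n)$ adjunction reads $C^2+C\cdot K=2g-2$ with $g=p_a(C)$, and that the (CK)-group acts on $N_1(X_n)$ by isometries fixing $K$; hence $C^2$ and $g$ are (CK)-invariant. Since $C$ is effective its (CK)-orbit contains a standard class, and the (CK)-action sends classes of irreducible curves to classes of irreducible curves on general blow-ups, so I may assume $d\ge m_1\ge\cdots\ge m_n\ge0$ with $m_i+m_j+m_k\le d$. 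After this reduction the entire content of (SN) is the implication $g\ge1\Rightarrow C^2\ge0$, together with the statement that $C^2=0$ forces $g=1$, in which case \cite{cmsegre} already places $C$ in the (CK)-class of $(3;1^9,0^{n-9})$.

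The core step is the deduction of (SN) from (SHGH), Conjecture~\ref{conj:GHH}. Let $g\ge1$. Since $C$ is a prime divisor, $\cOO_{X_n}(C)$ is effective; were it special, then by (SHGH) it would be $(-1)$-special, so some $(-1)$-curve $\Gamma$ would satisfy $\cOO_{X_n}(C)\cdot\Gamma\le-2$ and would hence be a fixed component of $|C|$ of multiplicity at least two, forcing $C\ge2\Gamma$ — impossible for the reduced irreducible divisor $C$. Therefore $\cOO_{X_n}(C)$ is non-special, so its true dimension equals its expected dimension $\max\{-1,v(C)\}$; being effective its true dimension is nonnegative, whence $v(C)\ge0$, that is, using $v(C)=\tfrac12(C^2-C\cdot K)$ and adjunction, $C^2\ge g-1$. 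For $g\ge1$ this is $C^2\ge0$, and if $C^2=0$ then $g=1$ and \cite{cmsegre} identifies the (CK)-class; this is precisely the asserted exception. Only (SHGH) for the given $n$ is used, so for a fixed $n$ the conjecture (SN) is no harder than (SHGH).

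It remains to see how much is unconditional. If at most eight of the $m_i$ are positive, then $C$ lives on a del~Pezzo surface obtained by blowing up at most eight of the points, where $-K$ is ample, so adjunction gives $C^2=2g-2+(-K)\cdot C\ge2g-1\ge1$; the case $n=9$ is likewise classical. For $n\ge10$ the only available unconditional tool is Nagata's degeneration method: degenerate $\bbP^2$ (or a suitable blow-up) to a union of surfaces, specialize $x_1,\dots,x_n$ onto the components, and extract $d^2\ge\sum m_i^2$ from the flat limit of $C$. The main obstacle is exactly here, and it is of the same order of difficulty as (SHGH) itself: unlike Nagata's original count, which concerns a generic member of a large system, one must follow a single curve of positive genus and control the arithmetic genus of its degeneration, and what is being asserted is the nonexistence of an unexpected irreducible negative curve of positive genus (or one of genus $\ge2$ with $C^2=0$) on the general $X_n$. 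A realistic partial target, and the one relevant to the remainder of this paper, is to settle (SN) for the first open case $n=10$.
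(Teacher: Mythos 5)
The target statement is a conjecture, so the paper has no proof of it; what it offers instead is the remark in Proposition~\ref{prop:impl}(i), justified by the display \eqref{eq:Snag}, that (SHGH) implies (SN), together with the classification from \cite{cmsegre} of the exceptional case $g=1$, $C^2=0$. Your derivation is precisely that reduction with the step the paper calls ``clear'' filled in explicitly (an irreducible $C$ cannot be $(-1)$-special, so under (SHGH) it is non-special, whence $v(C)\ge 0$ and $C^2\ge g-1$), so you take essentially the same approach and are appropriately honest that for $n\ge 10$ the statement itself remains open.
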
 

\begin{proposition}\label{prop:impl} We have the following:
\begin{itemize}
\item [(i)] (SHGH) implies  (SN) (in particular (SN) holds for $n\le 9$);
\item[(ii)] (SHGH) holds if and only if (SN) and Conjecture \ref{conj:LU} both hold;
\item [(iii)]  (SN) implies (N).
\end{itemize}
\end{proposition}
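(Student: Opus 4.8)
The plan is to prove the three statements in the order (iii), (i), (ii), using the Negativity Conjecture \ref{NegativityConj} as a hub and the cohomological criterion \eqref{eq:spec} for most of the bookkeeping. For (iii) I would first deduce Conjecture \ref{NegativityConj} from (SN): a rational prime divisor $C$ satisfies $C^2\ge-1$ by \cite{dF1}, while a prime divisor with $g(C)>0$ satisfies $C^2>0$, or $C^2=0$ in the (CK)-exceptional elliptic case, by (SN); in every case $C^2\ge-1$, with $g=0$ whenever $C^2=-1$. Then (N) follows from Conjecture \ref{NegativityConj} by Lemma \ref{lem:impl}.

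For (i), assume (SHGH) and let $C\subset X_n$ be irreducible of genus $g>0$. Then $\cL:=\cOO_{X_n}(C)$ is effective and not $(-1)$-special: a $(-1)$-special system needs a $(-1)$-curve $E$ with $\cL\cdot E\le-2$, but $\cL\cdot E=C\cdot E\ge0$ for an irreducible $E\neq C$, and $E=C$ is impossible since $g>0$ while $(-1)$-curves are rational. By (SHGH) $\cL$ is non-special, so its true dimension equals its expected dimension, which is $\ge0$ since $\cL$ is effective; hence the virtual dimension $v(\cL)\ge0$, and since $v(\cL)=(\cL^2-\cL\cdot K)/2=C^2-g+1$ by adjunction this reads $C^2\ge g-1$. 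Thus $C^2>0$ if $g\ge2$; and if $g=1$ then $C^2\ge0$, with $C^2=0$ only when $C$ is (CK)-equivalent to $(3;1^9,0^{n-9})$, which forces $n\ge9$, by \cite{cmsegre}. This is precisely (SN); the assertion for $n\le9$ then follows from the known validity of (SHGH) for $n\le9$.

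For (ii), the implication $(\Rightarrow)$ is (i) together with the fact, already recorded in the text, that (SHGH) implies Conjecture \ref{conj:LU}: the (CK)-orbit of a nef class contains a standard, hence non-$(-1)$-special, representative, which is non-special by (SHGH), and (CK)-equivalent systems share true and expected dimension. For $(\Leftarrow)$, assume (SN) and Conjecture \ref{conj:LU}; by the equivalent formulation of (SHGH) it suffices to show that every standard system $\cL$ is non-special, and a non-effective one is trivially so, so assume $h^0(\cL)>0$. I claim $\cL$ is nef, and then Conjecture \ref{conj:LU} concludes the proof. Let $C$ be a prime divisor. By (SN) and Conjecture \ref{NegativityConj} (as in (iii)), either $C$ is a $(-1)$-curve, in which case $\cL\cdot C\ge0$ because standard systems are nef against $(-1)$-curves (classical; see \cite{Har85, Hirsh}), or $C^2\ge0$, in which case $\cL\cdot C\ge0$ because $\cL$ is effective: otherwise $C$ is a fixed component of $|\cL|$, so $(\cL-kC)\cdot C\le\cL\cdot C<0$ forces $\cL-kC$ effective for every $k\ge1$, contradicting that $(\cL-kC)\cdot A=\cL\cdot A-k\,(C\cdot A)<0$ for an ample class $A$ and $k\gg0$. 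Hence $\cL$ is nef.

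I expect the $(\Leftarrow)$ direction of (ii) to be the crux. It rests on two inputs: the classical nefness of standard systems against $(-1)$-curves — which must be cited with care, since only the weaker ``not $(-1)$-special'' is quoted in the text — and the Negativity Conjecture, which is exactly where (SN) is needed, namely to exclude prime divisors of self-intersection $\le-2$ that a standard system might meet negatively. One should also verify the reformulation step carefully, i.e. that a special system genuinely admits a standard representative with non-negative multiplicities; a safer route, should that be delicate, is to run the argument through the moving part $M$ of $|\cL|$ (which is automatically nef, hence non-special by Conjecture \ref{conj:LU}) and to show separately that a standard effective system has no fixed part. Parts (i) and (iii) are routine once the cited classification results and \eqref{eq:spec} are in hand.
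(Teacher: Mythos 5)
Your proposal is correct in substance, but it takes a somewhat different route from the paper's terse proof, and it is worth contrasting the two. For part (i) the paper declares the implication ``clear''; you flesh it out by observing that $\cO_{X_n}(C)$ is not $(-1)$-special for $C$ irreducible of positive genus, invoking the cohomological characterization of non-speciality and adjunction to reach $C^2 \ge g-1$, and then using the classification of [cmsegre] for the $g=1$, $C^2=0$ case. This is exactly what the paper is implicitly appealing to when it writes \eqref{eq:Snag}. For the reverse implication in (ii), the paper simply cites \cite{Har} for the equivalence (SHGH) $\iff$ (NegativityConj $+$ Conjecture~\ref{conj:LU}); you instead reprove this equivalence from scratch by showing that a standard effective system is nef and then applying Conjecture~\ref{conj:LU}. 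That argument is sound, but, as you yourself flag, it uses the stronger classical fact that a standard class with nonnegative multiplicities has nonnegative intersection with every $(-1)$-class (not merely that standard systems are not $(-1)$-special, which is all the paper explicitly quotes). This is indeed in \cite{Har85}, and it can be checked directly (reduce to the vertices of the polytope $\{m_1\ge\cdots\ge m_n\ge0,\ m_1+m_2+m_3\le d\}$ and use $e_1\le e$, $e_1+e_2\le 2e$, $\sum e_i = 3e-1$ for a $(-1)$-class), so the appeal is legitimate; your remark about a fallback via the moving part is a sensible hedge.

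For part (iii) the route you choose (deduce Conjecture~\ref{NegativityConj} from (SN) plus \cite{dF1}, then invoke Lemma~\ref{lem:impl}) is logically admissible, but note that the paper deliberately does \emph{not} go through Lemma~\ref{lem:impl} here; it gives a direct Cauchy--Schwarz argument that explicitly treats the boundary case $\sqrt{n}\,d = m_1 + \cdots + m_n$ (which forces $m_1=\cdots=m_n$ and $d=m\sqrt{n}$, hence $n$ a square, excluded for $n\ge 16$ by Proposition~\ref{prop:nagata}). The proof of Lemma~\ref{lem:impl} as written only addresses the strict violation $\sqrt{n}\,d < \sum m_i$ and does not explicitly dispose of the equality case relevant to strictness when $n\ge10$; if you rely on the lemma you should at least remark that equality is impossible for non-square $n$ by irrationality of $\sqrt n$, and impossible for square $n\ge16$ by Proposition~\ref{prop:nagata}, which is precisely what the paper's direct argument makes explicit. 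With that observation added, your proof is complete and matches the paper's conclusions throughout.
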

\begin{proof} Part (i),  hence also the forward implication of (ii), is clear. 
As for the reverse implication in (ii), note thet (SN) implies Conjecture \ref{NegativityConj}, and Conjecture \ref{NegativityConj} 
together with Conjecture \ref{conj:LU} is the formulation of (SHGH) given in \cite{Har}. 
Finally we prove part (iii), hence we assume $n\ge 9$. 
Let $C$  be an irreducible curve in $(d;m_1,\ldots,m_n)$ on $X_n$. If 
 $C^2\ge 0$ then $d^2\ge m_1^2+\cdots+m_n^2$. By the Cauchy--Schwartz inequality this implies
\eqref {eq:nagata}.
Equality  holds if and  only if $m_1=\cdots=m_n=m$ and $d=m\sqrt n$, hence $n$ is a square, which is only possible if $n=9$ by  Proposition \ref {prop:nagata}. 
If $C^2<0$, then $g=0$ and $C^2=-1$, so that
\[ C\cdot N_n=C\cdot (N_n-K_n)- C\cdot K_n= (\sqrt n-3) C \cdot L+1\ge 1.\] 
\end{proof}

\section{The Mori cone viewpoint}\label{sec:Mori}

\subsection{Generalities}  A class $\xi\in N_1(X_n)$ is \emph{integral} [resp. \emph{rational}]  if it sits in $\Pic(X_n)$ [resp. in $\Pic(X_n)\otimes _\bbZ \bbQ$]. A ray in $N_1(X_n)$ is \emph{rational} if it is generated by a rational class.  A rational ray in $N_1(X_n)$ is \emph{effective} if it is generated by an effective  class. 


The \emph{Mori cone} $\Mor(X_n)$ is the closure in $N_1(X_n)$  of the set $\NE(X_n)$ of all effective rays, and it is the dual of the \emph{nef cone} $\Nef(X_n)$ which is the closed cone
described by all nef rays. 

A \emph{$(-1)$-ray} in $N_1(X_n)$ is a ray generated by a $(-1)$-\emph{curve}, i.e., a smooth, irreducible, rational curve $C$ with $C^2=-1$ (hence $C\cdot K_n=-1$). 

\emph{Mori's Cone Theorem} says that
\[\Mor(X_n)= \Mor(X_n)^\succcurlyeq + R_n \]
where $\Mor(X_n)^\succcurlyeq$ [resp. $\Mor(X_n)^\preccurlyeq$]  is the subset of $\Mor(X_n)$ described by rays generated by nonzero classes $\xi$ such that $\xi\cdot K_n\ge 0$ [resp. $\xi\cdot K_n\le 0$] and
\[R_n\;= \sum_{\rho\;\text {a}\; (-1)-\text{ray}} \rho\; \subseteq\;  \Mor(X_n)^\preccurlyeq.\]
We will denote by $\Mor(X_n)^\succ$ [resp. $\Mor(X_n)^\prec$]  the interior of $\Mor(X_n)^\succcurlyeq$ [resp. $\Mor(X_n)^\preccurlyeq$].

Concerning (SHGH), the situation is well understood for classes in $R_n$, in view of this result (see \cite {LafUg}):

\begin{theorem}\label{thm:LU}  An effective and nef linear system on $X_n$ with class in $R_n$  is non-special. 
\end{theorem}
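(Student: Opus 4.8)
The plan is to reduce the theorem to a clean vanishing statement for divisors that are honest sums of \emph{distinct} $(-1)$-curves. We may assume $\cL\neq 0$, the zero class being non-special; then $\cL$ is effective with $d=\cL\cdot L>0$, so $h^2(\cL)=h^0(K_n-\cL)=0$ because $(K_n-\cL)\cdot L=-3-d<0$, and by \eqref{eq:spec} it is enough to prove $h^1(\cL)=0$. Since $\cL$ lies in the \emph{rational} cone $R_n$ we may write $\cL=\sum_i a_iC_i$ as a finite nonnegative rational combination of $(-1)$-curves; nefness of $\cL$ and effectivity of the $C_i$ give $\cL^2=\sum_i a_i(\cL\cdot C_i)\ge 0$, while $\cL\cdot K_n=-\sum_i a_i\le -1$.

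\textbf{Lemma.} \emph{If $D$ is the class of an effective divisor $D_1+\cdots+D_N$ whose components $D_1,\dots,D_N$ are pairwise distinct $(-1)$-curves on $X_n$, then $h^1(D)=0$, and hence $D$ is non-special.} To prove this, first choose an ordering $\sigma$ so that $P_k\cdot D_{\sigma(k+1)}\ge 0$ for $k=0,\dots,N-1$, where $P_k=D_{\sigma(1)}+\cdots+D_{\sigma(k)}$. This can be done greedily: at stage $k$ the sum $\sum_B P_k\cdot B$ over the not-yet-chosen components $B$ equals $P_k\cdot(D-P_k)$, which is $\ge 0$ because $P_k$ and $D-P_k$ share no component, so every cross-term is an intersection of distinct irreducible curves; hence some $B$ works. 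Now the exact sequences $0\to\cO_{X_n}(P_k)\to\cO_{X_n}(P_{k+1})\to\cO_{D_{\sigma(k+1)}}(P_{k+1})\to 0$ have for quotient a line bundle on a $\bbP^1$ of degree $P_{k+1}\cdot D_{\sigma(k+1)}=P_k\cdot D_{\sigma(k+1)}-1\ge -1$, hence with vanishing $h^1$; since $h^1(\cO_{X_n})=0$, induction on $k$ yields $h^1(D)=h^1(P_N)=0$. As $h^2(D)=0$ for $D\neq 0$, non-speciality follows.

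It remains to put $\cL$ in the form required by the Lemma. The Cremona--Kantor group permutes $(-1)$-curves, hence preserves $R_n$, nefness, effectivity and speciality, so we may replace $\cL$ by a (CK)-equivalent, e.g. Cremona reduced, class. I would then prove: \emph{a nef effective class in $R_n$ is the class of a sum of $\cL\cdot(-K_n)$ pairwise distinct $(-1)$-curves}; the $-K_n$-degree count and $\cL^2\ge 0$ are consistent with this, and it holds in the cases I have checked ($L$, $(2;1^4)$, $(4;2^4)$, $-K_6=(3;1^6)$ as a tritangent trio on the cubic surface $X_6$, \dots), usually after one Cremona reduction. Alternatively, when $\cL^2>0$ Hodge index theory shows that $\cL^{\perp}\cap\Mor(X_n)$ is spanned by pairwise disjoint $(-1)$-curves and is $K_n$-negative, so by the contraction theorem $\cL=g^{*}\cL''$ for a birational morphism $g\colon X_n\to Y$ to a smooth rational surface with $\cL''$ ample; then $h^i(\cL)=h^i(\cL'')$ reduces matters to the vanishing $h^1(\cL'')=0$ on the smaller surface $Y$, while the case $\cL^2=0$ is treated through the fibration to which $|\cL|$ then gives rise.

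The main obstacle is exactly this last reduction. An element of the \emph{cone} $R_n$ is a priori only a nonnegative combination of $(-1)$-curves \emph{with repetitions allowed}, and repetitions are genuinely fatal: $2(L-E_1-E_2)=(2;2^2,0^{n-2})$ lies in $R_n$ and is \emph{special} (though not nef), and already $\cO_{\bbP^1}(-2)$ appears if one tries to run the sequences above on it. So the real content is to show that \emph{nefness} forces a repetition-free decomposition --- or, in the contraction picture, that the target $Y$ can be taken with $-K_Y$ nef, so that Kawamata--Viehweg vanishing applied to $\cL''-K_Y$ gives $h^1(\cL'')=0$. I expect this to come from analysing which $(-1)$-curves lie in the null-face $\cL^{\perp}\cap\Mor(X_n)$ together with Cremona reduction, but it is where the work is concentrated.
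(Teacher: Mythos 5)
The paper does not prove Theorem \ref{thm:LU}; it cites it to Laface--Ugaglia, so there is no paper-internal argument to compare with. Taken on its own merits, your auxiliary Lemma is correct and well argued: for a reduced effective divisor $D=D_1+\cdots+D_N$ whose components are pairwise distinct $(-1)$-curves, the greedy ordering with $P_k\cdot D_{\sigma(k+1)}\ge 0$ exists because $P_k\cdot(D-P_k)\ge 0$ (cross-terms of distinct irreducible curves are nonnegative), the restriction sequences then produce quotient line bundles of degree $\ge -1$ on $\bbP^1$, and induction from $h^1(\cO_{X_n})=0$ gives $h^1(D)=0$. The $h^2$-vanishing by degree and the reduction to $h^1(\cL)=0$ are also fine.

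But the proposal is not a proof, and you say as much. The step left open --- that a nef, effective, integral class in $R_n$ (possibly after Cremona reduction) is represented by a \emph{reduced} divisor supported on pairwise distinct $(-1)$-curves --- is precisely where the theorem lives, and it is genuinely nontrivial. Membership in the cone $R_n$ only yields a nonnegative real (hence, by Carath\'eodory, rational) combination $\sum a_iC_i$; integrality of $\cL$ does not force the $a_i$ to be $0$ or $1$, and your count $N=\cL\cdot(-K_n)$ only says how many curves are needed \emph{if} such a decomposition exists. You verify a handful of low cases, which is not a proof, and you offer no mechanism by which nefness plus Cremona reduction would force the coefficients into $\{0,1\}$.

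The alternative route via contraction and Kawamata--Viehweg vanishing has a gap of the same magnitude, as you also note. For the vanishing you need $\cL''-K_Y$ nef and big, and you propose to get this from $-K_Y$ nef. But the blow-down $Y$ of $X_n$ along $\cL^\perp\cap\Mor(X_n)$ need not have nef anticanonical. In fact $R_n$ does contain ample classes even when $n\ge 10$ --- for instance $(10;1^{10})=\sum_{k=1}^{10}(L-E_k-E_{k+1})+\sum_{i=1}^{10}E_i$ is ample and lies in $R_{10}$ --- and for such a class $\cL^\perp\cap\Mor(X_n)=\{0\}$, so $Y=X_n$ and $-K_{X_n}$ is not nef. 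So neither route closes as written; you have reduced the theorem to an unproved and nonobvious structural claim about nef classes in $R_n$.
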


The \emph{nonnegative cone} $\cQ_n$ in $N_1(X_n)$ is the cone of classes $\xi$ such that $\xi\cdot L\ge 0$ and $\xi^2\ge 0$, whose boundary, which is a quadric cone, we denote by $\partial\cQ_n$. By Riemann-Roch's theorem one has $\cQ_n\subseteq \Mor(X_n)$.  We will use the obvious notation
$\cQ_n^\succcurlyeq, \cQ_n^\preccurlyeq, \cQ_n^\succ, \cQ_n^\prec$ to denote the intersection of $\cQ_n$ with 
$\Mor(X_n)^\succcurlyeq$ etc., and similarly for $\partial\cQ_n$.

The situation is quite different according to the values of $n$:
\begin{itemize}
\item [(i)] The \emph{Del Pezzo case} $n\le 8$. Here $-K_n$ is ample, hence $\Mor(X_n)=R_n$. There are only finitely many $(-1)$-curves on $X_n$, hence $\Mor(X_n)$ is polyhedral and $\Mor(X_n)\subseteq \Mor(X_n)^\prec$. If $\kappa_n=[3,1^n]$ is the \emph {anticanonical ray}, then $\kappa_n$ is in the interior of $\cQ_n$.

\item [(ii)] The \emph{quasi Del Pezzo case} $n=9$.  Here $-K_9$ is an irreducible curve with self-intersection 0. Hence $\kappa_9$ is nef, sits on  $\partial\cQ_9$,  and the tangent hyperplane to $\partial\cQ_9$ at $\kappa_9$ is the hyperplane $\kappa_9^\perp$  of classes $\xi$ such that $\xi\cdot K_9=0$.
Then $\Mor(X_9)^\succcurlyeq=\kappa_9$ and $\Mor(X_n)=\kappa_9+R_9\subseteq   \Mor(X_n)^\preccurlyeq$. There are infinitely many $(-1)$-curves on $X_9$,  and $\kappa_9$ is the only limit ray of  $(-1)$-rays. The anticanonical ray $\kappa_9$ coincides with the Nagata ray $\nu_9$.

\item [(iii)] The \emph{general case} $n\ge 10$.  Here $-K_n$ is not effective, and has negative self-intersection $9-n$. Hence $\kappa_n$ lies off $\cQ_n$, which in turn has non-empty  intersection with both $\Mor(X_n)^\succ$ and $\Mor(X_n)^\prec$. There are infinitely many $(-1)$-curves on $X_n$,  whose rays lie in $\Mor(X_n)^\prec$ and their limit rays 
lie at the intersection of $\partial\cQ_n$ with the hyperplane $\kappa_n^\perp$. The Nagata ray $\nu_n$ sits on $\partial\cQ_n^\succ$. The plane  joining the rays $\kappa_n$ and $\nu_n$ is the \emph{homogeneous slice}, formed by the classes of \emph{homogeneous linear systems} of the form $(d;m^n)$, with $d\ge 0$. 
\end{itemize}

For information on the homogeneous slice, and relations between (N) and (SHGH) there, see \cite {CM11}.

\subsection {More conjectures}\label{ssec:more}  The following conjecture is in \cite {dF10}. Taking into account  the aforementioned result in \cite {dF1}, it would be a consequence of  (SN).

\begin {conjecture}
\label{conj:df} If $n\ge 10$, then 
\begin{equation}\label{eq:df}
\Mor(X_n)=\cQ_n^\succcurlyeq \;+\; R.
\end{equation}
\end{conjecture}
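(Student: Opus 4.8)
\medskip\noindent
The plan is to deduce the equality from Conjecture~\ref{NegativityConj} --- which, by Lemma~\ref{lem:impl} and the discussion following it, is implied by (SN), hence by (SHGH) --- so what follows is a conditional reduction, not an unconditional proof. One inclusion is free: $\cQ_n\subseteq\Mor(X_n)$ by Riemann--Roch gives $\cQ_n^\succcurlyeq\subseteq\Mor(X_n)$, while $R$, being generated by $(-1)$-classes, lies in $\Mor(X_n)$; since $\Mor(X_n)$ is a convex cone, $\cQ_n^\succcurlyeq+R\subseteq\Mor(X_n)$. All the content is in the reverse inclusion.

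By Mori's Cone Theorem $\Mor(X_n)=\Mor(X_n)^\succcurlyeq+R_n$, and since $R_n\subseteq R$ it suffices to prove $\Mor(X_n)^\succcurlyeq\subseteq\cQ_n^\succcurlyeq+R$. I would first dispose of two soft points. First, $\cQ_n^\succcurlyeq+R$ is closed: the limit rays of $(-1)$-rays lie on $\partial\cQ_n\cap\kappa_n^\perp\subseteq\cQ_n^\succcurlyeq$, so $\cQ_n^\succcurlyeq+R$ equals $\cQ_n^\succcurlyeq+\overline R$ (with $\overline R$ the closure of $R$), a sum of two closed convex cones; this sum is closed because $\Mor(X_n)$ is pointed, as any relation $q+r=0$ with $q\in\cQ_n^\succcurlyeq$ and $r\in\overline R\subseteq\Mor(X_n)$ forces $q=-r\in\Mor(X_n)\cap(-\Mor(X_n))=\{0\}$. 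Second, $\Mor(X_n)^\succ\ne\emptyset$ for $n\ge10$: perturbing $\xi\in\Mor(X_n)^\succcurlyeq$ to $\xi+\varepsilon\beta$ with $\beta\in\cQ_n^\succ$ shows that $\Mor(X_n)^\succcurlyeq$ is the closure of $\Mor(X_n)^\succ$, and since $K_n$ is strictly positive on the interior $\Mor(X_n)^\succ$, every ray there is the limit of rays generated by effective integral classes $\xi$ with $\xi\cdot K_n>0$. Combining these two facts, the claim reduces to: every effective integral class $\xi$ with $\xi\cdot K_n\ge0$ lies in $\cQ_n^\succcurlyeq+R$.

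The heart of the matter is then a peeling process on such a $\xi$. If $\xi^2\ge0$ we are done, since $\xi\cdot L\ge0$ ($L$ being nef) then puts $\xi$ in $\cQ_n^\succcurlyeq$. If $\xi^2<0$, fix an effective divisor in the class $\xi$ and write it as a sum of distinct prime divisors $\xi=\sum_j a_jD_j$ with multiplicities $a_j\ge1$; from $0>\xi^2=\sum_j a_j(\xi\cdot D_j)$ some $\xi\cdot D_1<0$, and since the cross terms $D_1\cdot D_j$ ($j\ne1$) are $\ge0$ we get $\xi\cdot D_1\ge a_1D_1^2$, hence $D_1^2<0$. This is exactly where Conjecture~\ref{NegativityConj} enters: it forces $D_1^2=-1$ with arithmetic genus $0$, i.e.\ $D_1$ is a $(-1)$-curve and $D_1\cdot K_n=-1$. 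Then $\xi-D_1$ is again effective, $(\xi-D_1)\cdot K_n=\xi\cdot K_n+1\ge0$, and $(\xi-D_1)^2=\xi^2-2(\xi\cdot D_1)-1\ge\xi^2+1$ because $\xi\cdot D_1\le-1$. Iterating, the self-intersection strictly increases, so after at most $-\xi^2$ steps one reaches an effective class $\eta$ with $\eta^2\ge0$, $\eta\cdot L\ge0$ and $\eta\cdot K_n\ge0$ --- that is, $\eta\in\cQ_n^\succcurlyeq$ --- while $\xi=\eta+(\text{a sum of }(-1)\text{-curves})\in\cQ_n^\succcurlyeq+R$, as needed.

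I expect the only genuine obstacle to be the hypothesis Conjecture~\ref{NegativityConj} itself. The one step that uses it --- forcing a fixed prime component of negative self-intersection to be a $(-1)$-curve --- is precisely the assertion that $X_n$ carries no prime divisor with $C^2<-1$, nor any with $C^2=-1$ of positive arithmetic genus; this is the hard, still-open core of the whole (SHGH) circle for non-square $n\ge10$. Everything else --- Mori's Cone Theorem, the closedness and density reductions, and the termination of the peeling --- is routine.
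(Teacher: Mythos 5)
The statement you were asked to prove is labelled \emph{Conjecture} in the paper (it is de~Fernex's conjecture, quoted from \cite{dF10}), and the paper gives no proof of it --- it is open. What the paper does say, in the sentence preceding the statement, is that ``taking into account the aforementioned result in \cite{dF1}, it would be a consequence of (SN).'' Your write-up is precisely a fleshing-out of that one-line remark: you correctly recognize that what can be proved is the \emph{implication} (Conjecture~\ref{NegativityConj}, equivalently (SN) combined with de~Fernex's theorem on rational curves) $\Rightarrow$ \eqref{eq:df}, and you state this explicitly at the outset. So there is no unconditional proof to compare against; the honest comparison is between your conditional argument and the paper's unproved remark, and your argument supplies exactly what the remark promises.

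On the substance of your conditional proof, I find it correct. The easy inclusion is fine. For the reverse, the closedness reduction is sound: since $\Mor(X_n)$ is strongly convex, $\overline R$ is pointed, and (by a conic Carath\'eodory plus compactness argument) a pointed closure of a cone generated by rays is generated by those rays together with their limit rays; the limit rays of $(-1)$-rays lie on $\partial\cQ_n\cap\kappa_n^\perp\subseteq\cQ_n^\succcurlyeq$, so indeed $\overline R\subseteq R+\cQ_n^\succcurlyeq$ and $\cQ_n^\succcurlyeq+R=\cQ_n^\succcurlyeq+\overline R$ is a closed sum of pointed closed cones. The density reduction to effective integral classes with $\xi\cdot K_n>0$ is also fine. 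And the peeling step is exactly the elementary mechanism behind Zariski decomposition in the presence of Conjecture~\ref{NegativityConj}: if $\xi^2<0$ some prime component $D_1$ satisfies $\xi\cdot D_1<0$, whence $D_1^2<0$, whence $D_1$ is a $(-1)$-curve; subtracting it strictly increases $\xi^2$ and strictly increases $\xi\cdot K_n$, so the process terminates in $\cQ_n^\succcurlyeq+R$. A small stylistic remark: you could bypass the density and closedness steps altogether by applying Zariski decomposition directly to any $\xi\in\Mor(X_n)^\succcurlyeq$; writing $\xi=P+N$ with $P$ nef and $N$ supported on negative curves, Conjecture~\ref{NegativityConj} puts $N\in R$, while $P\in\cQ_n$ and $P\cdot K_n=\xi\cdot K_n-N\cdot K_n\ge\xi\cdot K_n\ge0$ gives $P\in\cQ_n^\succcurlyeq$ with no approximation needed. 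But your argument, as given, is also correct.
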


Let $D_n=(\sqrt {n-1}, 1^n)\in N_1(X_n)$ be the \emph{de Fernex} point  and $\delta_n=[\sqrt {n-1}, 1^n]$ the corresponding ray. One has $D_n^2=-1$, $D_n\cdot K_n=n-3\sqrt{n-1}=\frac{n^2-9n+9}{n+3\sqrt{n-1}}>0$ 
for $n\geq 8$ and, if $n=10$, $D_n=-K_n$.  We will denote by  $\Delta_n^\succcurlyeq$ [resp. $\Delta_n^\preccurlyeq$] the set of classes $\xi\in N_1(X_n)$ such that $\xi\cdot D_n\ge 0$ [resp. $\xi\cdot D_n\le 0$]. 

One has (see \cite {dF10}):

\begin{theorem}\label{thm:df} If $n\ge 10$ one has:
\begin{itemize}
\item [(i)] all $(-1)$-rays lie in the cone $\cD_n:=\cQ_n-\delta_n$;
\item [(ii)] if $n=10$, all $(-1)$-rays lie  on the boundary  of the cone $\cD_n$;
\item [(iii)] if $n>10$, all $(-1)$-rays lie  in the complement of the cone $\cK_n:=\cQ_n-\kappa_n$;
\item [(iv)] $\Mor(X_n)\subseteq \cK_n+R$;
\item [(v)] if  Conjecture \ref {conj:df} holds, then 
\begin{equation}\label{eqdf2}\Mor(X_n) \cap \Delta_n^\preccurlyeq=\cQ_n\cap \Delta_n^\preccurlyeq.\end{equation}
\end{itemize}
\end{theorem}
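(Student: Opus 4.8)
The statement to prove is Theorem \ref{thm:df}, whose five parts concern the location of $(-1)$-rays and the Mori cone relative to the quadric cone $\cQ_n$, the Nagata ray $\kappa_n$, and the de Fernex ray $\delta_n$. The unifying idea is that every $(-1)$-curve $C$ has $C^2=-1$ and $C\cdot K_n=-1$, so its class $\gamma=C$ lies on the affine quadric $\{\xi^2=-1\}$ in the slice $\{\xi\cdot K_n=-1\}$, and one simply has to compare this locus with the translated cones $\cQ_n-\delta_n$, $\cQ_n-\kappa_n$. For (i)--(iii) I would write $\gamma=\delta_n' + (\gamma-\delta_n')$ where $\delta_n'$ is the representative of $\delta_n$ with $\delta_n'\cdot K_n=-1$ (i.e. $D_n$ itself when $n=10$, suitably scaled otherwise), and show $\gamma-\delta_n'\in\cQ_n$ by checking $(\gamma-\delta_n')\cdot L\ge 0$ and $(\gamma-\delta_n')^2\ge 0$; the second is a one-line computation using $\gamma^2=-1=D_n^2$, $\gamma\cdot K_n=-1$, $\gamma\cdot L\ge 1$, together with $D_n\cdot K_n=\frac{n^2-9n+9}{n+3\sqrt{n-1}}$ and the Hodge index theorem on the rank-two sublattice spanned by $\gamma$ and $D_n$ (or the fact that $\gamma\cdot D_n\ge 0$ by nefness of $\delta_n$, which follows from (N) since $\delta_n$ is proportional to a class dominated by the Nagata class). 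The vanishing $( \gamma-D_n)^2=0$ exactly when $n=10$ gives (ii), and the analogous computation with $\kappa_n$ in place of $\delta_n$ — using $K_n^2=9-n<0$ for $n>10$ — gives (iii), once one notes $\cK_n$ is the cone \emph{over} a quadric so its complement is where a translated class leaves $\cQ_n$.

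\textbf{Parts (iv) and (v).} For (iv) I would invoke Mori's Cone Theorem, $\Mor(X_n)=\Mor(X_n)^{\succcurlyeq}+R_n$, and treat the two summands separately: $R_n\subseteq \cK_n+R$ is immediate from (iii) (each $(-1)$-ray is already in the complement of $\cK_n$, but more to the point $R_n=R\subseteq\cK_n+R$ trivially once one matches the notation $R=R_n$), while for $\Mor(X_n)^{\succcurlyeq}$ one uses $\cQ_n\subseteq\Mor(X_n)$ (Riemann--Roch) together with the observation that any class $\xi$ with $\xi\cdot K_n\ge 0$ and $\xi\cdot L\ge 0$ that is a limit of effective classes must satisfy $\xi^2\ge 0$ — indeed a class in $\Mor(X_n)^{\succcurlyeq}$ off $\cQ_n$ would have negative self-intersection and positive intersection with $K_n$, and pairing against $\kappa_n$ (which lies off $\cQ_n$ on the $K_n$-negative side) via Hodge index forces it into $\cK_n$. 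So $\Mor(X_n)^{\succcurlyeq}\subseteq\cK_n$, giving (iv). For (v), assuming Conjecture \ref{conj:df}, intersect \eqref{eq:df} with the half-space $\Delta_n^{\preccurlyeq}$: the inclusion $\supseteq$ in \eqref{eqdf2} is automatic from $\cQ_n\subseteq\Mor(X_n)$, and for $\subseteq$ one must show no ray of $R=R_n$ survives in $\Delta_n^{\preccurlyeq}$ except insofar as it is already accounted for by $\cQ_n$. By part (i) every $(-1)$-ray sits in $\cQ_n-\delta_n$; I would argue that a $(-1)$-class $\gamma$ with $\gamma\cdot D_n\le 0$ must, combined with $\gamma\in\cQ_n-\delta_n$ and the sign of $D_n\cdot K_n$, actually lie on $\partial(\cQ_n-\delta_n)$ in a way that pins $\gamma$ to the boundary quadric of $\cQ_n$ itself — so $\gamma\in\cQ_n$ — and then $\cQ_n^{\succcurlyeq}+R$ intersected with $\Delta_n^{\preccurlyeq}$ collapses to $\cQ_n\cap\Delta_n^{\preccurlyeq}$.

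\textbf{Main obstacle.} The genuinely delicate point is part (v), and within it the claim that $(-1)$-rays in $\Delta_n^{\preccurlyeq}$ contribute nothing new beyond $\cQ_n$: this is a statement about the \emph{intersection} of the translated quadric cone $\cQ_n-\delta_n$ with the hyperplane $D_n^{\perp}$, and getting the geometry right requires carefully tracking, for $n>10$, how far $\delta_n$ sits from the axis of $\cQ_n$ and whether the translate can poke across $D_n^{\perp}$ at interior points of $\cQ_n$. I expect the clean way through is to parametrize the relevant rank-three slice spanned by $L$, $K_n$, $D_n$ (or equivalently by $\kappa_n$, $\nu_n$, $\delta_n$), reduce every inequality to this three-dimensional picture, and then the four parts (i)--(iv) become planar conic computations and (v) becomes a statement about a conic meeting a line. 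The risk is purely bookkeeping — sign conventions for the translated cones $\cQ_n-\delta_n$ and $\cK_n$, and the distinction between the ray $\delta_n$ and its normalized representative $D_n$ — rather than anything structurally deep, since all the hard input (that $(-1)$-curves are the only negative curves in the relevant range, and that $\delta_n$, $\nu_n$ are nef under the standing conjectures) is already available from de Fernex's results and Lemma \ref{lem:impl}.
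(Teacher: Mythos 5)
The paper does not prove Theorem \ref{thm:df} at all: it is stated with the citation ``One has (see \cite{dF10})'' and is imported verbatim from de Fernex's preprint, so there is no in-paper proof to compare against. Your proposal is thus being judged on its own merits, and it contains several concrete errors.

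First, a sign error that propagates through parts (i)--(iii): you take $\delta_n'$ to be ``the representative of $\delta_n$ with $\delta_n'\cdot K_n=-1$ (i.e. $D_n$ itself when $n=10$).'' But $D_n\cdot K_n=n-3\sqrt{n-1}>0$ for $n\ge 10$; in particular $D_{10}\cdot K_{10}=+1$, not $-1$. A representative of the ray $\delta_n$ with $K_n$-pairing $-1$ would have to be a \emph{negative} multiple of $D_n$, which is not in $\delta_n$. Related to this, the decomposition $\gamma=\delta_n'+(\gamma-\delta_n')$ with $\gamma-\delta_n'\in\cQ_n$ and $\delta_n'$ a positive multiple of $D_n$ would prove $\gamma\in\cQ_n+\delta_n$, not the stated $\gamma\in\cD_n=\cQ_n-\delta_n$. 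What one actually needs is $\gamma+\lambda D_n\in\cQ_n$ for some $\lambda\ge0$. Since $D_n^2=-1$, the maximum of $(\gamma+\lambda D_n)^2=-1+2\lambda(\gamma\cdot D_n)-\lambda^2$ over $\lambda$ is $(\gamma\cdot D_n)^2-1$, attained at $\lambda^*=\gamma\cdot D_n$ (not at the $\lambda$ pinning $\cdot K_n=0$). The key inequality is that for a $(-1)$-curve $\gamma=(d;m_1,\dots,m_n)$ one has $\sum m_i=3d-1$, hence $\gamma\cdot D_n=d(\sqrt{n-1}-3)+1\ge1$ for $n\ge10$, with equality iff $n=10$ or $d=0$; this gives (i) and (ii). Part (iii) is the computation $(\gamma-\mu K_n)^2=-1+2\mu+\mu^2(9-n)$, whose supremum over $\mu\ge0$ is $(10-n)/(n-9)<0$ when $n>10$.

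Second, the argument you sketch for (iv) rests on the claim that a class $\xi$ with $\xi\cdot K_n\ge0$, $\xi\cdot L\ge0$ lying in $\Mor(X_n)$ ``must satisfy $\xi^2\ge0$.'' That is precisely the content of Conjecture \ref{conj:df} restricted to $\Mor(X_n)^\succcurlyeq$, not a fact; an effective class can have a $(-1)$-curve as a component and negative self-intersection while still pairing nonnegatively with $K_n$. The whole point of (iv) versus (v) is that unconditionally one only gets the weaker bound $\Mor(X_n)\subseteq\cK_n+R$, with $\cK_n\supsetneq\cQ_n^\succcurlyeq$ in general, and only \emph{assuming} Conjecture \ref{conj:df} does one get the sharper conclusion \eqref{eqdf2} of (v). Your proof of (iv) therefore begs the question, and your (v) correspondingly needs to start from \eqref{eq:df} rather than re-deriving it. For the genuine argument one must consult \cite{dF10}.
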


\begin{remark} As noted in \cite {dF10}, Conjecture \ref {conj:df} does not imply  that $\Mor(X_n)^\succcurlyeq=\cQ_n^\succcurlyeq$, unless $n=10$, in which case this is exactly what it says (see Theorem \ref {thm:df}(v)). Conjecture \ref {conj:df} implies (N) but not (SN).
\end{remark}

Consider the following:

\begin{conjecture}[The $\Delta$-conjecture ($\Delta$C)] \label{conj:CHMR} If $n\ge 10$ one has 
\begin{equation}\label{eq:dCHMR}
\partial\cQ_n\cap \Delta_n^\preccurlyeq\subset \Nef(X_n).\end{equation}
\end{conjecture}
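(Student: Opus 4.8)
The plan is to show that $\xi\cdot C\ge 0$ for every prime divisor $C$ on $X_n$, where $\xi$ generates a ray in $\partial\cQ_n\cap\Delta_n^\preccurlyeq$; since nefness is tested against effective classes and these are nonnegative combinations of prime divisors, this suffices. Write $\xi=dL-\sum_{i=1}^n b_iE_i$, so that $d=\xi\cdot L\ge 0$, the relation $\xi^2=0$ becomes $\sum_i b_i^2=d^2$, and the hypothesis $\xi\cdot D_n\le 0$ becomes $\sum_i b_i\ge d\sqrt{n-1}$; a short Cauchy--Schwarz argument with these two relations shows in addition that $b_i\ge 0$ for every $i$, so that $\xi$ is already nonnegative on each exceptional class $E_i$. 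I would then distinguish the remaining prime divisors according to the sign of $C^2$.

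If $C^2\ge 0$ then $[C]\in\cQ_n$, and since $\xi\in\partial\cQ_n$ the light cone inequality for the intersection form on $N_1(X_n)$ (which, by the Hodge index theorem, has signature $(1,n)$) gives $\xi\cdot C\ge 0$; this step is unconditional. If $C$ is a $(-1)$-curve, write $C=(e;m_1,\ldots,m_n)$, so that $\sum_i m_i=3e-1$ and $\sum_i m_i^2=e^2+1$; then, using $\sum_i b_i^2=d^2$ and $\sum_i b_i\ge d\sqrt{n-1}$, the inequality $\xi\cdot C=de-\sum_i b_im_i\ge 0$ reduces by a Cauchy--Schwarz computation to $\bigl(e\sqrt{n-1}-(3e-1)\bigr)^2\ge 1$, which holds for every $n\ge 10$, with equality exactly when $n=10$ (so that these rays meet $\partial\Nef(X_n)$, as limits of $(-1)$-rays). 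The same conclusion can be read off from Theorem \ref{thm:df}(i), which locates the $(-1)$-rays inside $\cD_n=\cQ_n-\delta_n$, together with $\xi\cdot D_n\le 0$. Thus both cases are settled unconditionally for $n\ge 10$.

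What remains, and is genuinely difficult, is a prime divisor $C$ with $C^2<0$ that is not a $(-1)$-curve, i.e. an irreducible curve of arithmetic genus $g\ge 1$ with $C^2<0$; I see no way to treat it unconditionally. Theorem \ref{thm:df} controls only the $(-1)$-rays, and even part (iv), bounding $\Mor(X_n)$ within $\cK_n+R$, does not close the gap: $\cK_n$ is the cone generated by $\cQ_n$ and the anticanonical ray $\kappa_n$, and $\xi$ need not pair nonnegatively with $\kappa_n$ (for instance $N_n\cdot(-K_n)=3\sqrt n-n<0$ when $n\ge 10$), so no sign conclusion follows. The only route I see is to exclude such curves altogether, which is precisely Conjecture \ref{NegativityConj}: granting it, every prime divisor either has $C^2\ge 0$ or is a $(-1)$-curve, so the two cases above conclude. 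Thus the proposal reduces ($\Delta$C) to Conjecture \ref{NegativityConj}, the crux being the (conjectural) absence of ``unexpected'' negative curves. This is consistent with ($\Delta$C) containing Nagata's conjecture: the Nagata ray lies in $\partial\cQ_n\cap\Delta_n^\preccurlyeq$, since $N_n\cdot D_n=\sqrt{n(n-1)}-n<0$, and Lemma \ref{lem:impl} already deduces (N) from Conjecture \ref{NegativityConj}. Accordingly I would not expect an unconditional proof of ($\Delta$C) by these methods.
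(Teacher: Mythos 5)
The statement labelled ($\Delta$C) is a conjecture (Conjecture \ref{conj:CHMR}); the paper does not prove it --- it remains open --- so there is no paper proof to compare yours against, and you correctly recognize this. Your analysis is nonetheless sound. The Cauchy--Schwarz deduction that each $b_i\ge 0$ is correct (for each $j$, $\sum_{i\ne j}b_i\le\sqrt{n-1}\,\bigl(\sum_{i\ne j}b_i^2\bigr)^{1/2}\le d\sqrt{n-1}\le\sum_ib_i$); the Lorentzian light-cone inequality handles prime divisors with $C^2\ge 0$; and the $(-1)$-curve computation checks out: maximizing $\sum b_im_i$ subject to $\sum b_i^2=d^2$ and $\sum b_i\ge d\sqrt{n-1}$, with $\sum m_i=3e-1$ and $\sum m_i^2=e^2+1$, reduces $\xi\cdot C\ge 0$ to $\bigl(e\sqrt{n-1}-(3e-1)\bigr)^2\ge 1$, which is an identity for $n=10$ and strict for $n\ge 11$, $e\ge 1$. (A minor quibble: equality in this last inequality also holds for $e=0$, i.e.\ $C=E_i$, at every $n$, though there $\xi\cdot C=b_i\ge 0$ is already known; so ``equality exactly when $n=10$'' is not quite accurate.)

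Your conclusion --- that ($\Delta$C) would follow from Conjecture \ref{NegativityConj} and that the hard content is precisely the absence of prime divisors with $C^2<-1$, or with $C^2=-1$ and $g>0$ --- is correct and in fact slightly refines the implications the paper makes explicit: the paper states that (S$\Delta$C) implies ($\Delta$C), and in \S\ref{ssec:more} that (SN) (via \cite{dF1}) would give Conjecture \ref{conj:df}, from which ($\Delta$C) also follows using Theorem \ref{thm:df}(i) and the $(-1)$-curve estimate. Since (SN) implies Conjecture \ref{NegativityConj} (proof of Proposition \ref{prop:impl}(ii)), your reduction is consistent with, and somewhat more direct than, the paper's chain of implications.
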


\begin{proposition}\label{prop:iml}  If  ($\Delta$C) holds, then
\begin{equation}\label{eq:dCHMR2}
\Mor(X_n) \cap \Delta_n^\preccurlyeq=\Nef(X_n) \cap \Delta_n^\preccurlyeq=\cQ_n\cap \Delta_n^\preccurlyeq.
\end{equation}
\end{proposition}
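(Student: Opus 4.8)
The plan is to prove the chain of equalities \eqref{eq:dCHMR2} by establishing two sets of containments, using ($\Delta$C) together with the general facts about the Mori and nef cones recorded in Section \ref{sec:Mori}. First I would observe that $\Nef(X_n)\subseteq\Mor(X_n)$ always holds (since $X_n$ is a rational surface, every nef class is a limit of effective classes — indeed $\cQ_n\subseteq\Mor(X_n)$ by Riemann--Roch and $\Nef(X_n)\subseteq\cQ_n$ because a nef class $\xi$ has $\xi\cdot L\ge 0$ and $\xi^2\ge 0$), so intersecting with $\Delta_n^\preccurlyeq$ gives $\Nef(X_n)\cap\Delta_n^\preccurlyeq\subseteq\Mor(X_n)\cap\Delta_n^\preccurlyeq$. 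Dually, I would note $\Nef(X_n)\subseteq\cQ_n$ gives $\Nef(X_n)\cap\Delta_n^\preccurlyeq\subseteq\cQ_n\cap\Delta_n^\preccurlyeq$. So the content is in the reverse inclusions: it suffices to show that, assuming ($\Delta$C), both $\Mor(X_n)\cap\Delta_n^\preccurlyeq$ and $\cQ_n\cap\Delta_n^\preccurlyeq$ are contained in $\Nef(X_n)\cap\Delta_n^\preccurlyeq$.

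For the inclusion $\cQ_n\cap\Delta_n^\preccurlyeq\subseteq\Nef(X_n)$, the idea is a convexity argument anchored on the boundary. Let $\xi\in\cQ_n\cap\Delta_n^\preccurlyeq$ be nonzero. If $\xi\in\partial\cQ_n$ then $\xi\in\Nef(X_n)$ directly by ($\Delta$C), so assume $\xi$ lies in the interior of $\cQ_n$ (i.e.\ $\xi^2>0$, $\xi\cdot L>0$). To test nefness I must check $\xi\cdot C\ge 0$ for every prime divisor $C$. If $C^2\ge 0$ then $C\in\cQ_n$ (after checking $C\cdot L\ge 0$, which holds for a prime divisor other than the $E_i$, and the $E_i$ are themselves in $\cQ_n$'s boundary situation — handled separately), and two classes in the interior/closure of the round cone $\cQ_n$ pair nonnegatively by the reverse Cauchy--Schwarz inequality for the Lorentzian form, so $\xi\cdot C\ge 0$. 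The remaining case is $C$ with $C^2<0$, which by the de Fernex result quoted after Lemma \ref{lem:impl} forces $C$ to be a $(-1)$-curve. Here I invoke Theorem \ref{thm:df}(i): every $(-1)$-ray lies in $\cD_n=\cQ_n-\delta_n$, i.e.\ on the far side of $D_n$ from $\delta_n$, meaning $[C]\cdot D_n\le 0$ in the appropriate normalization — more precisely $[C]$ lies in the half-space $\Delta_n^{\succcurlyeq}$ bounded by $D_n^\perp$ on the same side as the part of $\partial\cQ_n$ not in $\Delta_n^\preccurlyeq$. Since $\xi\in\Delta_n^\preccurlyeq$, one shows using the geometry of the two cones $\cQ_n$ and $\cD_n$ relative to the hyperplane $D_n^\perp$ that $\xi\cdot[C]\ge 0$; concretely, write $[C]$ as a nonnegative combination of $\delta_n$ and a class on $\partial\cQ_n\cap\Delta_n^{\succcurlyeq}$ (this is what $[C]\in\cD_n$ and Theorem \ref{thm:df}(i) give), and check $\xi\cdot\delta_n\ge 0$ (because $\delta_n\in\cQ_n\cap\Delta_n^\preccurlyeq$ pairs nonnegatively with the interior class $\xi$, again by reverse Cauchy--Schwarz, noting $D_n^2=-1<0$ requires a small separate argument near $\delta_n$) and $\xi\cdot(\text{boundary part})\ge 0$ by ($\Delta$C). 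Collecting cases, $\xi$ is nef.

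For $\Mor(X_n)\cap\Delta_n^\preccurlyeq\subseteq\cQ_n\cap\Delta_n^\preccurlyeq$, I would use Mori's Cone Theorem $\Mor(X_n)=\Mor(X_n)^\succcurlyeq+R_n$ together with $\cQ_n\subseteq\Mor(X_n)$ and Theorem \ref{thm:df}(iv). An extremal ray of $\Mor(X_n)$ is either a $(-1)$-ray or lies in $\Mor(X_n)^{\succcurlyeq}$; by the de Fernex results (Theorem \ref{thm:df}(i)) the $(-1)$-rays sit in $\Delta_n^{\succcurlyeq}$, hence when we intersect with $\Delta_n^\preccurlyeq$ the only $(-1)$-rays that survive are those on $D_n^\perp$ itself, and for $n=10$ these lie on $\partial\cQ_{10}$ by Theorem \ref{thm:df}(ii) — for $n>10$ Theorem \ref{thm:df}(iii) removes them from near $\kappa_n$, and one argues they cannot contribute to $\Mor(X_n)\cap\Delta_n^\preccurlyeq$ outside $\cQ_n$. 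For the $K_n$-nonnegative part, combine Theorem \ref{thm:df}(iv), $\Mor(X_n)\subseteq\cK_n+R$, with the location of $\partial\cQ_n\cap\Delta_n^\preccurlyeq$ to conclude that any class in $\Mor(X_n)\cap\Delta_n^\preccurlyeq$ which is not in $\cQ_n$ would have to be a positive combination of a class in $\cK_n=\cQ_n-\kappa_n$ and $(-1)$-classes in a way that is incompatible with lying in $\Delta_n^\preccurlyeq$; here the inequality $D_n\cdot K_n>0$ for $n\ge 8$ (recorded in the excerpt) is exactly what pins down the relative position of $D_n^\perp$ and $\kappa_n$. Finally, assembling: $\Mor(X_n)\cap\Delta_n^\preccurlyeq\subseteq\cQ_n\cap\Delta_n^\preccurlyeq\subseteq\Nef(X_n)\cap\Delta_n^\preccurlyeq\subseteq\Mor(X_n)\cap\Delta_n^\preccurlyeq$ closes the loop and forces all three to coincide.

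The main obstacle I expect is the $(-1)$-curve bookkeeping in the second paragraph: showing that an interior class $\xi$ of $\cQ_n$ lying in $\Delta_n^\preccurlyeq$ necessarily pairs nonnegatively with every $(-1)$-curve. The subtlety is that $D_n^2=-1$, so $D_n^\perp$ is not tangent to $\partial\cQ_n$ and the hyperplane $\Delta_n^\preccurlyeq$ cuts \emph{through} the round cone $\cQ_n$; one must genuinely use Theorem \ref{thm:df}(i)--(iii) to confine the $(-1)$-rays to the correct side and then a careful Lorentzian-geometry computation (reverse Cauchy--Schwarz plus the precise description of $\cD_n$ versus $\cQ_n$) to get the sign right. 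For $n=10$ this is cleaner because $D_{10}=-K_{10}$ and the $(-1)$-rays lie on $\partial\cQ_{10}$, so it is really the case $n>10$ that needs care, and that is where I would spend the bulk of the argument.
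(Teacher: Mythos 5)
Your overall skeleton (close the loop of three inclusions) matches the paper, but your argument for the key containment $\cQ_n\cap\Delta_n^\preccurlyeq\subseteq\Nef(X_n)$ takes a genuinely different — and, as written, gappy — route. The paper does not test nefness curve by curve. It observes that $\cQ_n\cap\Delta_n^\preccurlyeq$ is a closed convex cone whose extremal rays all lie on $\partial\cQ_n\cap\Delta_n^\preccurlyeq$ (any class with $\xi^2>0$ can be perturbed inside $\cQ_n\cap\Delta_n^\preccurlyeq$, even when $\xi\cdot D_n=0$, because $D_n^2<0$ makes $\cQ_n\cap D_n^\perp$ a full round cone in $D_n^\perp$). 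Since ($\Delta$C) places exactly those extremal rays in $\Nef(X_n)$ and $\Nef(X_n)$ is closed and convex, the whole truncated cone is nef; no geometry of individual negative curves is needed. Your approach instead splits into cases $C^2\ge 0$ and $C^2<0$, and in the second case you invoke the de Fernex bound to conclude $C$ is a $(-1)$-curve. But the result from \cite{dF1} quoted in the paper is only for \emph{irreducible rational} curves; an irreducible curve of positive genus with $C^2<0$ is not excluded by anything you are allowed to use here (ruling those out is essentially the content of (SN) / Conjecture \ref{NegativityConj}, which is \emph{not} among the hypotheses of Proposition \ref{prop:iml}). So the case $C^2<0$, $C$ non-rational is a genuine hole in your verification of nefness. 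Separately, in the $(-1)$-curve subcase you mis-state the decomposition: Theorem \ref{thm:df}(i) gives $[C]=q-\lambda D_n$ with $q\in\cQ_n$ and $\lambda\ge0$, i.e.\ a class in $\cQ_n$ \emph{minus} a nonnegative multiple of $D_n$, not ``a nonnegative combination of $\delta_n$ and a boundary class''; the correct sign bookkeeping is $\xi\cdot[C]=\xi\cdot q-\lambda(\xi\cdot D_n)\ge 0$ since $\xi\cdot q\ge0$ by reverse Cauchy--Schwarz and $\xi\cdot D_n\le 0$.

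For the third equality, the paper's proof is a one-line duality argument rather than the case analysis via Mori's cone theorem and Theorem \ref{thm:df}(ii)--(iv) that you gesture at. Once $\cQ_n\cap\Delta_n^\preccurlyeq\subseteq\Nef(X_n)$ is known, taking duals ($\Mor(X_n)=\Nef(X_n)^\vee$) gives $\Mor(X_n)\subseteq(\cQ_n\cap\Delta_n^\preccurlyeq)^\vee=\cQ_n+\bbR_{\ge0}(-D_n)=\cD_n$, and an elementary Lorentzian computation shows $\cD_n\cap\Delta_n^\preccurlyeq\subseteq\cQ_n$ (if $\xi=q-\lambda D_n$ with $q\in\cQ_n$, $\lambda\ge0$, and $\xi\cdot D_n\le0$, then $q\cdot D_n\le-\lambda$ and $\xi^2=q^2-2\lambda(q\cdot D_n)-\lambda^2\ge\lambda^2\ge0$). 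Your paragraph on this step (``one argues they cannot contribute \dots would have to be a positive combination \dots incompatible with lying in $\Delta_n^\preccurlyeq$'') is not an argument. I would encourage you to replace both halves of your proof with the convexity observation and the duality computation: they are shorter, they do not rely on unproved facts about negative curves, and they make transparent why the hypothesis is exactly ($\Delta$C), a statement about $\partial\cQ_n$, and nothing stronger.
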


\begin{proof}   By \eqref {eq:dCHMR} and  by 
the convexity of $\Nef(X_n)$ one has  $\cQ_n\cap \Delta_n^\preccurlyeq\subseteq \Nef(X_n) \cap \Delta_n^\preccurlyeq$. Moreover  $\Nef(X_n) \cap \Delta_n^\preccurlyeq\subseteq \Mor(X_n) \cap \Delta_n^\preccurlyeq$. Finally \eqref {eq:dCHMR} implies  \eqref {eqdf2}   because $\Mor(X_n)$ is dual to $\Nef(X_n)$. 
\end{proof}

The following proposition indicates that Nagata-type conjectures we are discussing here can be interpreted as asymptotic forms of the (SHGH) conjecture. 

\begin{proposition}\label{prop:ANS} Let $n\ge 10$. 
\begin{itemize}
\item [(i)] If ($\Delta$C) holds, then all  classes  in $\cQ_n\cap \Delta_n^\preccurlyeq- \partial\cQ_n\cap \Delta_n^\preccurlyeq$ are ample and therefore, if integral, they are (ASN);
\item [(ii)] If (SN) holds, then a rational class  in $\cQ_n^\succcurlyeq- \partial\cQ_n^\succcurlyeq$ is (ASN) unless it has negative intersection with some $(-1)$-curve.
\end{itemize} 
\end{proposition}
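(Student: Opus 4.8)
The plan is to treat the two parts separately, since (i) concerns the \emph{interior} of the nef side of the nonnegative cone while (ii) concerns the \emph{interior} of $\cQ_n^\succcurlyeq$, and the mechanisms are different (Kleiman's criterion in the first case, the structure theorem for $\Mor(X_n)$ in the second).

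\medskip

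For part (i), assume ($\Delta$C). First I would invoke Proposition \ref{prop:iml}, which under ($\Delta$C) gives
$$\Mor(X_n) \cap \Delta_n^\preccurlyeq=\Nef(X_n) \cap \Delta_n^\preccurlyeq=\cQ_n\cap \Delta_n^\preccurlyeq.$$
Now take a class $\xi\in \cQ_n\cap \Delta_n^\preccurlyeq-\partial\cQ_n\cap\Delta_n^\preccurlyeq$; this means $\xi$ is nef, $\xi^2>0$, $\xi\cdot L>0$, and $\xi\cdot D_n\le 0$ with strict inequality \emph{or} $\xi$ lies in the relative interior. I would show $\xi$ is in the interior of $\Nef(X_n)$: since $\Nef(X_n)$ is the dual of $\Mor(X_n)$, and by the displayed equality $\Mor(X_n)\cap\Delta_n^\preccurlyeq=\cQ_n\cap\Delta_n^\preccurlyeq$, it suffices to check $\xi\cdot\eta>0$ for every nonzero $\eta\in\Mor(X_n)$. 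Split $\eta$ according to its position relative to $\Delta_n$: if $\eta\in\Delta_n^\preccurlyeq$ then $\eta\in\cQ_n$ and $\xi\cdot\eta>0$ follows from the fact that $\xi$ lies strictly inside the light cone $\cQ_n$ (two classes in the closed nonnegative cone, one of which is interior, pair strictly positively — this is the reverse Cauchy--Schwarz for the Lorentzian form of signature $(1,n)$ on $N_1(X_n)$); if $\eta\in\Delta_n^\succcurlyeq$ one uses $\xi\cdot D_n\le 0$ together with $\eta\cdot D_n\ge 0$ and the representation of $\eta$ via Theorem \ref{thm:df}(iv), writing $\eta$ as a nonnegative combination of a class in $\cK_n=\cQ_n-\kappa_n$ and $(-1)$-rays, and checking positivity term by term. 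Once $\xi$ is shown to be ample (interior of the nef cone), the ANS conclusion for integral $\xi$ is immediate: an ample integral class on $X_n$ is ANS, as recorded in the Generalities subsection.

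\medskip

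For part (ii), assume (SN). Let $\xi$ be a rational class in $\cQ_n^\succcurlyeq-\partial\cQ_n^\succcurlyeq$ having nonnegative intersection with every $(-1)$-curve; I may clear denominators and assume $\xi$ integral, and after a positive multiple assume $\xi\in\Pic(X_n)$ with $\xi^2>0$, $\xi\cdot L>0$, $\xi\cdot K_n\ge 0$. The first step is to prove $\xi$ is nef. By Mori's Cone Theorem, $\Mor(X_n)=\Mor(X_n)^\succcurlyeq+R_n$, so it is enough to pair $\xi$ positively against (a) rays in $R_n$ and (b) rays in $\Mor(X_n)^\succcurlyeq$. For (a): a $(-1)$-ray generator $C$ satisfies $\xi\cdot C\ge 0$ by hypothesis, and a general limit ray of $(-1)$-rays lies on $\partial\cQ_n\cap\kappa_n^\perp$ (case (iii) of the $n\ge 10$ discussion), hence in $\overline{\cQ_n}$, so $\xi\cdot(\text{limit})\ge 0$ since $\xi\in\cQ_n$. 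For (b): here is where (SN) enters. A ray in $\Mor(X_n)^\succcurlyeq$ not in $\cQ_n$, if effective, is generated by (a limit of) classes of irreducible curves $C$ with $C^2<0$ and $C\cdot K_n\ge 0$, i.e. $C^2\le C\cdot K_n$, forcing arithmetic genus $g\ge 1$; but (SN) says such a $C$ has $C^2>0$ unless it is (CK)-equivalent to $(3;1^9,0^{n-9})$, and that class is nef with square $0$ — contradiction in the first case, harmless in the second. So $\Mor(X_n)^\succcurlyeq\subseteq\overline{\cQ_n}$ under (SN), giving $\xi\cdot\eta\ge 0$ there. Hence $\xi$ is nef. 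The second step is to upgrade nef to ANS: apply Theorem \ref{thm:LU} and Conjecture \ref{conj:LU}? No — rather, I want that a nef class with $\xi^2>0$ and $\xi\cdot K_n\ge 0$ has all its multiples non-special for large $x$. Since $\xi$ is nef and big ($\xi^2>0$), $h^2(x\xi)=h^0(K_n-x\xi)=0$ for $x\gg 0$ (as $\xi\cdot L>0$ while $K_n\cdot L<0$), and $h^1(x\xi)=0$ by the assumed (SN)$\Rightarrow$(SHGH)-on-nef-classes, i.e. Conjecture \ref{conj:LU} holds under (SN) via Proposition \ref{prop:impl}(ii) — here one needs $\xi$ nef $\Rightarrow$ $x\xi$ nef, immediate. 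Thus $h^1(x\xi)=0$ for all $x$, so $x\xi$ is non-special for all large (indeed all) $x$, i.e. $\xi$ is ANS.

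\medskip

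The main obstacle will be the bookkeeping in part (i) when $\eta$ straddles the hyperplane $D_n^\perp$ or when $\xi$ is on the relative boundary $\partial(\Delta_n^\preccurlyeq)\cap\cQ_n^\circ$: one must be careful that strict positivity of $\xi\cdot\eta$ really does hold for \emph{all} extremal $\eta$ and not merely for those in $\cQ_n$, and this is exactly where Theorem \ref{thm:df}(iv)--(v) and the precise shape of ($\Delta$C) are needed rather than just reverse Cauchy--Schwarz. In part (ii), the only delicate point is verifying that the excluded class $(3;1^9,0^{n-9})$ genuinely causes no problem — it is nef, it has zero self-intersection, it meets $K_n$ with value $0$, so it sits on $\partial\cQ_n^\succcurlyeq$, hence is \emph{not} in $\cQ_n^\succcurlyeq-\partial\cQ_n^\succcurlyeq$ and also pairs nonnegatively with our $\xi$; thus it never obstructs nefness of $\xi$, and when $\xi$ itself happens to be a multiple of it, $\xi$ is already on $\partial\cQ_n$ and excluded from the hypothesis — but one should double-check that $\xi\cdot(3;1^9,0^{n-9})=0$ does not violate the strict-interior assumption on $\xi$, which it does not since $\xi^2>0$.
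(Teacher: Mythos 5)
Your handling of part (ii) contains a genuine gap at the final step, where you upgrade nefness to (ASN). You write that ``Conjecture \ref{conj:LU} holds under (SN) via Proposition \ref{prop:impl}(ii)'' and invoke it to get $h^1(x\xi)=0$. But Proposition \ref{prop:impl}(ii) asserts only that (SHGH) is \emph{equivalent} to the conjunction of (SN) \emph{and} Conjecture \ref{conj:LU}; it does not say that (SN) by itself implies Conjecture \ref{conj:LU}. These two conjectures are independent inputs, and the present proposition assumes only (SN). So your vanishing of $h^1(x\xi)$ is unjustified, and the conclusion collapses.

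The paper avoids this issue entirely by a contraction argument that you did not anticipate, and which is the key idea of part (ii). If $\xi$ is nef (and big, since $\xi^2>0$), consider any irreducible curve $C$ with $\xi\cdot C=0$; the Hodge index theorem forces $C^2<0$, so under (SN) (together with \cite{dF1} for the rational case) $C$ is a $(-1)$-curve. Contract $C$ to pass to $X_{n-1}$, on which $\xi$ descends to a class $\xi_1$ with the same properties. Iterating, one arrives at a class $\xi_i\in N_1(X_{n-i})$ meeting every irreducible curve positively, hence ample by Nakai--Moishezon; and ample integral classes are unconditionally (ANS), as recorded in the Generalities. Since non-speciality is preserved under the pullback to $X_n$ (the contracted $(-1)$-curves carry no cohomology), $\xi$ is (ANS). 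The case where $\xi$ is not nef is dispatched exactly as you suggest: the offending irreducible curve has $C^2<0$, hence is a $(-1)$-curve under (SN), so $\xi$ falls into the ``unless'' clause. Your instinct to first \emph{deduce} nefness from nonnegativity against $(-1)$-curves (via $\Mor(X_n)^\succcurlyeq\subseteq\overline{\cQ_n}$ under (SN)) is a sound alternative framing, but it is extra work the paper does not need, and it is the subsequent step that fails.

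For part (i) your route is essentially the paper's --- Proposition \ref{prop:iml} plus Kleiman --- and you correctly flag the delicate point about classes on the slice $D_n^\perp\cap\cQ_n^\circ$, which the paper also leaves compressed. That caveat is legitimate but secondary; the substantive error is in part (ii).
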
 

\begin{proof} Part (i) follows from Proposition \ref {prop:iml} and  the fact that the ample cone is the interior of the nef cone (by Kleiman's theorem, see \cite {Kl}). 

As for part (ii), if $\xi\in \cQ_n^\succcurlyeq- \partial\cQ_n^\succcurlyeq$ is nef, then it is also big. 
If $C$ is an irreducible curve such that $\xi\cdot C=0$, then $C^2<0$ by the index theorem, hence $C$ is a $(-1)$-curve. Contract it, go to $X_{n-1}$ and take the class $\xi_1\in N_1(X_{n-1})$ which pulls back to $\xi$. Repeat the argument on $\xi_1$, and go on. At the end we find a class $\xi_i\in N_1(X_{n-i})$ for some $i\le n$, which is ample by Nakai-Moishezon criterion, and the (ASN) follows for $\xi$.

If $\xi$ is not nef and $C$ is an irreducible curve such that $\xi\cdot C<0$, then $C^2<0$ hence $C$ is a $(-1)$-curve.  \end{proof} 

One can give a stronger form of   $(\Delta C)$. 

\begin{lemma}\label{lem:nef} Any rational, non-effective ray in $\partial\cQ_n$ is nef and it is extremal for both $\Mor(X_n)$ and $\Nef(X_n)$. Moreover it lies in $\partial\cQ_n^\succcurlyeq$. 
\end{lemma}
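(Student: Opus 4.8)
The plan is to establish, for an integral primitive representative $\xi$ of the ray (so $\xi^{2}=0$ and, the intersection form having signature $(1,n)$, $d:=\xi\cdot L>0$), the four assertions: (a) $\xi\cdot K_{n}\ge 2$, hence $\xi\in\partial\cQ_{n}^{\succ}\subseteq\partial\cQ_{n}^{\succcurlyeq}$; (b) $\xi$ is nef; (c) $[\xi]$ is extremal for $\Nef(X_{n})$; (d) $[\xi]$ is extremal for $\Mor(X_{n})$. For (a): since $(K_{n}-\xi)\cdot L=-3-d<0$, the class $K_{n}-\xi$ is not effective, so $h^{2}(\xi)=h^{0}(K_{n}-\xi)=0$; Riemann--Roch gives $\chi(\xi)=1-\tfrac{1}{2}\xi\cdot K_{n}$, and non-effectivity forces $0=h^{0}(\xi)=\chi(\xi)+h^{1}(\xi)$, so $\xi\cdot K_{n}=2+2h^{1}(\xi)\ge 2$. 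For (b): as $\xi\in\cQ_{n}\subseteq\Mor(X_{n})$ is pseudoeffective it admits a Zariski decomposition $\xi=P+N$ with $P$ nef, $N\ge 0$ of negative-definite support and $P\cdot N=0$; were $\xi$ not nef, some irreducible $C$ would satisfy $\xi\cdot C<0$, whence $C^{2}<0$ (otherwise $C\in\cQ_{n}$ and $\xi\cdot C\ge 0$, two elements of the closed Lorentzian cone having nonnegative product), so $C$ would be a component of $N$, forcing $N\ne 0$ and $P^{2}=-N^{2}>0$ from $0=\xi^{2}=P^{2}+N^{2}$; then $P$ is nef and big, so $mP$ is integral and effective for suitable $m$, making $m\xi=mP+mN$ effective --- contradicting non-effectivity.

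For (c): every nef class lies in $\cQ_{n}$, and $\partial\cQ_{n}$ is the boundary of the round Lorentzian cone $\cQ_{n}$, i.e. any $u,v\in\cQ_{n}$ satisfy $u\cdot v\ge 0$ with equality only when $u,v$ are proportional null vectors; so $\xi=u+v$ with $u,v\in\Nef(X_{n})\subseteq\cQ_{n}$ and $\xi^{2}=0$ forces $u^{2}=v^{2}=u\cdot v=0$ and $u,v\in[\xi]$. For (d): since $\xi$ is nef, $\xi^{\perp}$ supports $\Mor(X_{n})$, so $F:=\xi^{\perp}\cap\Mor(X_{n})$ is a face containing $\xi$, and it suffices to see that $F=[\xi]$. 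For $0\ne w\in F$ the form on $\xi^{\perp}$ is negative semidefinite with radical $\bbR\xi$, so $w^{2}\le 0$, with $w^{2}=0$ only for $w\in[\xi]$; if $w^{2}<0$ then the Zariski negative part of $w$ is nonzero, and by the result of \cite{dF1} its components are $(-1)$-curves, each orthogonal to $\xi$ because $\xi$ is nef with $\xi\cdot w=0$. Thus (d) --- and hence the whole statement --- reduces to the claim $(\star)$: \emph{$\xi$ is orthogonal to no $(-1)$-curve}.

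For $(\star)$, the case $n\le 9$ is vacuous, since on $X_{n}$ with $n\le 9$ every nef class is effective (Riemann--Roch, using that $-K_{n}$ is effective). In general, contracting successively every $E_{i}$ with $\xi\cdot E_{i}=0$ (which preserves all hypotheses and the value $\xi\cdot K$) we may assume $\xi$ has only positive multiplicities, so only $(-1)$-curves $C$ of positive degree $e:=C\cdot L\ge 1$ need be excluded. For such $C$ and $n\le 10$, one uses $(C-K_{n})^{2}=10-n\ge 0$ and $(C-K_{n})\cdot L=e+3>0$, so $C-K_{n}\in\cQ_{n}$ and $\xi\cdot C=\xi\cdot(C-K_{n})+\xi\cdot K_{n}\ge 0+2>0$; this settles $(\star)$, and the lemma, for $n\le 10$. (A convenient reformulation valid for all $n$: Riemann--Roch and Serre duality give $h^{0}(\xi+K_{n})\ge 2+h^{1}(\xi)>0$, so $\xi+K_{n}$ is effective, and $\xi\cdot C=1+(\xi+K_{n})\cdot C$; thus $(\star)$ says that the effective class $\xi+K_{n}$ meets every $(-1)$-curve non-negatively.)

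The main obstacle is $(\star)$ for $n>10$. The natural continuation: for a positive-degree $(-1)$-curve $C$ put $\beta:=(\sqrt{n-1}-3)e+1>1$; a direct computation with $C^{2}=D_{n}^{2}=-1$ and $C\cdot D_{n}=\beta$ gives $(C+\beta D_{n})^{2}=\beta^{2}-1>0$ and $(C+\beta D_{n})\cdot L>0$, so $q:=C+\beta D_{n}$ lies in the interior of $\cQ_{n}$ and $\xi\cdot q>0$; hence $\xi\cdot C=\xi\cdot q-\beta\,\xi\cdot D_{n}>-\beta\,\xi\cdot D_{n}$, which yields $\xi\cdot C>0$ as soon as $\xi\cdot D_{n}\le 0$, i.e. $\xi\in\Delta_{n}^{\preccurlyeq}$. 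The remaining point --- that a non-effective rational class of $\partial\cQ_{n}$ lies in $\Delta_{n}^{\preccurlyeq}$ (equivalently, that $\xi+K_{n}$ meets every $(-1)$-curve non-negatively) --- is where the real work lies; I would attack it through the de Fernex picture of Theorem~\ref{thm:df} (the location of the $(-1)$-rays relative to $\cD_{n}=\cQ_{n}-\delta_{n}$, and of $\Mor(X_{n})$ relative to $\cK_{n}$), showing that the part of $\partial\cQ_{n}$ lying outside $\Delta_{n}^{\preccurlyeq}$ carries only effective classes, so that a non-effective $\xi$ is automatically confined to $\Delta_{n}^{\preccurlyeq}$.
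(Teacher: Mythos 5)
Parts (a)--(c) of your outline --- membership in $\partial\cQ_n^\succcurlyeq$ via Riemann--Roch, nefness via Zariski decomposition, and extremality in $\Nef(X_n)$ via $\Nef(X_n)\subseteq\cQ_n$ and the Lorentzian form --- are all correct, and the first two are essentially the paper's argument.

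The trouble is (d). You reduce extremality in $\Mor(X_n)$ to the claim $(\star)$ that $\xi$ is orthogonal to no $(-1)$-curve (equivalently that the exposed face $F:=\xi^\perp\cap\Mor(X_n)$ is exactly $[\xi]$), and you explicitly acknowledge not being able to prove this for $n>10$. But this reduction is the wrong one: $(\star)$ is strictly stronger than extremality of $[\xi]$, and in fact $(\star)$ is simply \emph{false}. If $(\delta;m_1,\ldots,m_{n-1})$ generates a good ray on $X_{n-1}$, then $(\delta;m_1,\ldots,m_{n-1},0)$ generates a good ray on $X_n$ which is orthogonal to the $(-1)$-curve $E_n$; more generally any good ray pulled back from a blow-down meets the new exceptional $(-1)$-curve in $0$. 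So $F$ can be strictly bigger than $[\xi]$, and no amount of de Fernex-style positivity will make it collapse; the point is that $[\xi]$ can still be extremal inside a bigger face. What you are not using in (d) is the hypothesis that makes this work: the \emph{non-effectivity} of $\xi$. Suppose $\xi=\alpha+\beta$ with $\alpha,\beta\in\Mor(X_n)$. Nefness of $\xi$ and $\xi^2=0$ give $\xi\cdot\alpha=\xi\cdot\beta=0$. Take Zariski decompositions $\alpha=P_\alpha+N_\alpha$, $\beta=P_\beta+N_\beta$; then $\xi\cdot P_\alpha=\xi\cdot N_\alpha=0$ and likewise for $\beta$. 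Since $P_\alpha$ is nef, lies in $\cQ_n$, and is orthogonal to the boundary class $\xi$, it must be proportional to $\xi$, say $P_\alpha=c_\alpha\xi$ with $c_\alpha\ge 0$, and similarly $P_\beta=c_\beta\xi$. Then
\[
(1-c_\alpha-c_\beta)\,\xi \;=\; N_\alpha+N_\beta \;\ge\;0.
\]
Non-effectivity of $[\xi]$ forbids $c_\alpha+c_\beta<1$; and if $c_\alpha+c_\beta\ge 1$, then $(c_\alpha+c_\beta-1)\xi+N_\alpha+N_\beta=0$ with both summands pseudoeffective forces $N_\alpha=N_\beta=0$ and $c_\alpha+c_\beta=1$. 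Hence $\alpha=c_\alpha\xi$ and $\beta=c_\beta\xi$, which is the desired extremality. This is also the correct reading of the paper's rather terse line ``$\alpha^2=-\alpha\cdot\beta=\beta^2$ yields that $\alpha$ and $\beta$ are proportional'': those equalities alone do not give proportionality when $\alpha^2<0$; one must combine them with non-effectivity as above.
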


\begin{proof} Let $\xi$ be a generator of the ray and let $\xi=P+N$ be the Zariski decomposition of $\xi$. Since the ray is not effective, one has $P^2=0$. Since $\xi^2=0$, then $N^2=0$, hence $N=0$, proving that $\xi$ is nef. 

Suppose that $\xi=\alpha+\beta$, with $\alpha,\beta\in \Mor(X_n)$. Then $\xi^2=0$,  $\xi\cdot \alpha\ge 0$ and $\xi\cdot \beta\ge 0$, imply $\alpha^2=-\alpha\cdot \beta=\beta^2$ which yields that $\alpha$ and $\beta$ are proportional. This shows that the ray is extremal for $\Mor(X_n)$. The same proof shows that  it is extremal also for $\Nef(X_n)$.   

The final assertion  follows by the Mori's Cone theorem. \end{proof}

A rational, non-effective ray in $\partial\cQ_n$  will be called a \emph{good ray}. An  irrational, nef ray in $\partial\cQ_n$  will be called a \emph{wonderful ray}. No wonderful ray has been detected so far. The following  is clear:

\begin{lemma} \label{lem:wonder} Suppose that $(\delta;m_1,\ldots,m_n)$ generates either a good or wonderful ray. If $(d;m_1,\ldots, m_n)$ is an
effective linear system  then
\[d>\delta =\sqrt {\sum_{i=1}^n m_i^2}.\]
\end{lemma}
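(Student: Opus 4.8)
The plan is to deduce Lemma \ref{lem:wonder} directly from the nefness of the ray generated by $(\delta;m_1,\ldots,m_n)$, combined with the strictness coming from the fact that an effective class cannot be proportional to a non-effective (or irrational) one. Write $\xi=\delta L-\sum_i m_iE_i$ for the generator of the good or wonderful ray; by Lemma \ref{lem:nef} (in the good case) and by hypothesis (in the wonderful case) $\xi$ is nef, and by construction $\xi\in\partial\cQ_n$, i.e. $\xi^2=\delta^2-\sum_i m_i^2=0$, which already gives $\delta=\sqrt{\sum_i m_i^2}$. It remains to prove the strict inequality $d>\delta$ for any effective $\cL=(d;m_1,\ldots,m_n)$ with the same multiplicity vector.

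First I would record the non-strict inequality: since $\xi$ is nef and $\cL$ is effective, $\xi\cdot\cL\ge 0$. Computing in the basis $(L,-E_1,\ldots,-E_n)$, using $L^2=1$, $E_i^2=-1$, $L\cdot E_i=0$, one gets $\xi\cdot\cL=\delta d-\sum_i m_i^2=\delta d-\delta^2=\delta(d-\delta)$. As $\delta\ge 0$ (indeed $\delta>0$ unless all $m_i=0$, a degenerate case one can dismiss since then the statement is vacuous or trivial), this yields $d\ge\delta$. Second, to upgrade to strict inequality, suppose $d=\delta$. Then $\xi\cdot\cL=0$, and since $\xi^2=0$ and $\cL$ is effective hence in $\Mor(X_n)$, the argument in the proof of Lemma \ref{lem:nef} (the Hodge-index-type computation: $\xi^2=0$, $\xi\cdot\cL=0$ force $\cL$ proportional to $\xi$) shows $\cL$ generates the same ray as $\xi$. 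But that ray is non-effective (good case) or irrational (wonderful case, since $\delta$ is irrational while $\cL$ is an integral class), contradicting $\cL$ effective. Hence $d>\delta$.

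The only delicate point is the reduction to the extremality/proportionality step, which I would phrase as: if $\xi\in\partial\cQ_n$ is nef and $\cL$ is effective with $\xi\cdot\cL=0$, then $\cL$ and $\xi$ are proportional — this is exactly the computation already carried out inside the proof of Lemma \ref{lem:nef}, applied with $\alpha=\cL$ viewed as a class in $\Mor(X_n)$ and $\beta$ a suitable complement, or more directly by noting that the restriction of the intersection form to the plane spanned by $\xi$ and $\cL$ is negative semidefinite with $\xi$ in its kernel, so $\cL$ lies in the radical. I expect no real obstacle here; the lemma is essentially a bookkeeping consequence of nefness plus the definitions of good and wonderful rays, which is presumably why the authors call it ``clear.'' The one thing to be careful about is excluding the trivial case $m_1=\cdots=m_n=0$ (where $\delta=0$ and the claimed strict inequality $d>0$ could fail for the zero class); but the zero class is not usually counted as an effective linear system in this paper's conventions, and in any case one may simply assume the ray is genuinely in the general region, so $\delta>0$.
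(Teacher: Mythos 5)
Your proof is correct, and since the paper offers no argument (the authors simply call the lemma ``clear''), it is reasonable to regard your write-up as spelling out exactly what they have in mind: nefness of the generator $\xi=(\delta;m_1,\ldots,m_n)$, the intersection computation $\xi\cdot\cL=\delta(d-\delta)\ge0$, and then an exclusion of the equality case using non-effectivity (good) or irrationality (wonderful).

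One small simplification is available at the strictness step. You invoke a Hodge-index/proportionality argument to conclude that $\xi\cdot\cL=0$ forces $\cL$ to lie on the ray generated by $\xi$. But in fact, if $d=\delta$ then $\cL$ and $\xi$ have identical coordinates $(\delta;m_1,\ldots,m_n)$ in the basis $(L,-E_1,\ldots,-E_n)$ — they share the same multiplicity vector by hypothesis — so $\cL=\xi$ outright, not merely proportionally. The contradiction with $\xi$ being non-effective (good case) or with $\cL$ being an integral class while $\delta$ is irrational (wonderful case) is then immediate, with no appeal to the extremality computation in Lemma \ref{lem:nef}. Also, the degenerate worry about $\delta=0$ resolves itself: $\delta^2=\sum m_i^2$ means $\delta=0$ forces $\xi=0$, which does not generate a ray, so $\delta>0$ holds automatically for any good or wonderful ray. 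These are cosmetic points; the substance of your argument is sound and matches the intended (unwritten) proof.
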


The following conjecture  implies  ($\Delta$C).

\begin{conjecture}[The strong $\Delta$-conjecture (S$\Delta$C)] \label{conj:SCHMR} If $n>10$, all rational rays in $\partial\cQ_n\cap \Delta_n^\preccurlyeq$  are non-effective.  If $n=10$, a rational ray in  $\cQ_{10}\cap \Delta_{10}^\preccurlyeq=\cQ_{10}^\succcurlyeq$ is non--effective, unless it is generated by a curve 
(CK)--equivalent to  $(3;1^9, 0)$.
\end{conjecture}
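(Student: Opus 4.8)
The plan is to reduce (S$\Delta$C) to the (SHGH) conjecture --- in its equivalent form \emph{``a standard system of plane curves with general base points is not special''} --- by a single virtual-dimension estimate valid all along $\partial\cQ_n\cap\Delta_n^\preccurlyeq$, treating the value $n=10$ separately with one extra, purely lattice-theoretic, ingredient. A rational ray is non-effective exactly when no positive multiple of its primitive integral generator is effective, so it is enough to rule out an \emph{integral effective} class $\cL=(e;p_1,\ldots,p_n)$ generating a ray in $\partial\cQ_n\cap\Delta_n^\preccurlyeq$, apart from the stated exception when $n=10$. Since $[\cL]\subset\partial\cQ_n$ we have $\cL^2=0$ and $\cL\cdot L\ge 0$; the subcase $e=0$ does not occur, because for general points the only irreducible curves of $L$-degree $0$ are the $E_i$ and a nonzero effective combination of them has negative self-intersection. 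Thus $e>0$. As $\cL\cdot D_n=e\sqrt{n-1}-\sum_i p_i$, the condition $\cL\in\Delta_n^\preccurlyeq$ reads $\sum_i p_i\ge\sqrt{n-1}\,e$, so that
\[ v(\cL)=\frac{\cL^2-\cL\cdot K_n}{2}=\frac{3e-\sum_i p_i}{2}\ \le\ \frac{(3-\sqrt{n-1})\,e}{2}. \]

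Consider first $n>10$. Then $\sqrt{n-1}>3$, so the estimate gives $v(\cL)<0$; this remains strict even on the face $D_n^\perp\cap\partial\cQ_n$, because a rational class there forces $n-1$ to be a perfect square $k^2$, necessarily with $k\ge 4$, and then $v(\cL)=\tfrac{1}{2}(3-k)e<0$. Since $\cL$ is effective, its (CK)-orbit contains a Cremona-reduced, hence standard, member $\cL'$; (CK)-equivalence preserves both virtual and true dimension, so $v(\cL')=v(\cL)<0$ while $h(\cL')=h(\cL)\ge 0$. But (SHGH) says a standard system is not special, which for $\cL'$ forces $h(\cL')=\max\{-1,v(\cL')\}=-1$, i.e. $\cL'$ is empty --- contradicting $h(\cL')\ge 0$. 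Hence no such $\cL$ exists and every rational ray in $\partial\cQ_n\cap\Delta_n^\preccurlyeq$ is non-effective, which is (S$\Delta$C) for $n>10$.

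For $n=10$ one has $D_{10}=-K_{10}$, so $\partial\cQ_{10}\cap\Delta_{10}^\preccurlyeq=\partial\cQ_{10}^\succcurlyeq$ and $v(\cL)=-\tfrac{1}{2}\,\cL\cdot K_{10}\le 0$ for every ray in this region. If $v(\cL)<0$ the argument above applies verbatim and yields non-effectivity. If $v(\cL)=0$ then $\cL^2=0$ and $\cL\cdot K_{10}=0$, so $\cL$ is a primitive isotropic class in the hyperbolic lattice $K_{10}^\perp\cong U\oplus E_8$, lying in the forward nappe of its positive cone. The classical structure of this lattice, together with the fact that the Cremona--Kantor group $\mathcal G_{10}$ acts on $K_{10}^\perp$ through its reflection (Weyl) group, which is transitive on primitive isotropic rays in a fixed nappe (compare the classification of genus-$1$, self-intersection-$0$ curves in \cite{cmsegre}), identifies $[\cL]$ with the (CK)-orbit of $(3;1^9,0)$, the pullback of the plane cubic through nine general points. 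This is an irreducible curve, so $[\cL]$ is generated by a curve (CK)-equivalent to $(3;1^9,0)$ and is (classically) effective. Hence every rational ray in $\partial\cQ_{10}^\succcurlyeq$ is non-effective except this single orbit --- exactly the stated exception.

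The main obstacle is plain: the reduction is unconditional only down to (SHGH) for standard systems, which is the central open problem, so this argument establishes (S$\Delta$C) only modulo (SHGH) --- consistently with the paper's viewpoint that the Nagata-type conjectures are asymptotic forms of (SHGH). To obtain an unconditional proof one would have to show directly that the standard systems of self-intersection $0$ and negative virtual dimension occurring here are empty at very general points, which is the hard core of (SHGH) in miniature and where essentially all the difficulty resides. Two subsidiary points also need care: the $n=10$, $v=0$ identification relies on the precise relation between $\mathcal G_{10}$ and the Weyl group of $K_{10}^\perp$ (which should be cited or argued in detail), and the degenerate configurations --- the case $e=0$, and the faces $D_n^\perp\cap\partial\cQ_n$, which are rational only when $n-1$ is a perfect square --- must be tracked explicitly.
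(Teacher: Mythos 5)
This statement is a \emph{conjecture} in the paper: the authors offer no proof of (S$\Delta$C), only partial evidence and relations to other conjectures (it implies ($\Delta$C); for $n=10$ it is shown in Proposition \ref{prop:SN10} to be \emph{equivalent} to (SN), and (SHGH) implies (SN) by Proposition \ref{prop:impl}). Your argument is explicitly conditional on (SHGH), the central open problem, so even if every step were correct it would not be a proof of (S$\Delta$C) but at most a claimed implication (SHGH) $\Rightarrow$ (S$\Delta$C) --- an implication the paper itself never asserts for $n>10$.

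More importantly, the conditional argument itself has a genuine gap at its key step. From an effective integral class $\cL$ on $\partial\cQ_n\cap\Delta_n^\preccurlyeq$ you deduce $v(\cL)<0$ and then claim a contradiction with (SHGH) because ``the (CK)-orbit contains a Cremona-reduced, hence standard, member with the same $v$ and $h$''. This would prove that \emph{no} effective system has negative virtual dimension, which is false: $(2;2,2,0^{n-2})$, or any multiple of a $(-1)$-curve class, is effective with $v<0$ and is perfectly compatible with (SHGH) --- these are exactly the $(-1)$-special systems. The point is that Cremona reduction of such a class produces negative multiplicities (equivalently, the fixed $(-1)$-curve components must be subtracted before the statement ``standard systems are non-special'' applies, and subtracting them changes $v$), so the Cremona-reduced member of the orbit need not be standard in the sense required, and no contradiction arises. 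To rescue the implication you would have to show, in addition, that an effective class with $\cL^2=0$ and $\cL\cdot D_n\le 0$ cannot have $(-1)$-curves in its fixed part, i.e. analyze decompositions $\cL=F+\sum_j k_jE_j$ with the $E_j$ $(-1)$-curves; this is precisely the nontrivial content that is missing, and it is the reason the paper treats these cone statements (Lemma \ref{lem:impl}, Propositions \ref{prop:impl} and \ref{prop:SN10}) by arguments on prime divisors rather than by a formal Cremona-reduction count. Your $n=10$, $v=0$ discussion (transitivity of the Weyl group of $K_{10}^\perp\cong U\oplus E_8$ on primitive isotropic forward classes, identifying the exceptional ray with $(3;1^9,0)$) is the plausible part and is essentially what underlies the paper's reference to \cite{cmsegre} and Proposition \ref{prop:SN10}, but the $v<0$ branch there inherits the same flaw; note also that for $n=10$ the conditional statement you aim at already follows from Proposition \ref{prop:impl}(i) combined with Proposition \ref{prop:SN10}, with no new argument needed.
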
 

\begin{proposition} \label{prop:SN10} For $n=10$,  (S$\Delta$C) is equivalent to (SN).
\end{proposition}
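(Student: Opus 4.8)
The plan is to prove both implications after restating the hypotheses for $n=10$ in terms of curves on $X_{10}$. Since $D_{10}=-K_{10}$, the half-space $\Delta_{10}^\preccurlyeq$ equals $\{\xi\in N_1(X_{10}):\xi\cdot K_{10}\ge 0\}$, so the rays that (S$\Delta$C) concerns are those generated by an effective integral class $\xi$ with $\xi^2=0$, $\xi\cdot L>0$ and $\xi\cdot K_{10}\ge 0$; by adjunction such a $\xi$ has arithmetic genus $g=1+\frac{1}{2}\xi\cdot K_{10}\ge 1$. I would use throughout de Fernex's theorem (\cite{dF1}) that an irreducible curve of negative self-intersection on $X_{10}$ is a $(-1)$-curve. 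The implication (S$\Delta$C)$\Rightarrow$(SN) is then immediate: if $C$ is irreducible with $g(C)>0$ and $C^2\le 0$, then $C^2<0$ is impossible (it would make $C$ a $(-1)$-curve and hence $g(C)=0$), so $C^2=0$; then $C\cdot K_{10}=2g(C)-2\ge 0$ and $C\cdot L>0$, so $[C]$ is an effective rational ray in $\partial\cQ_{10}\cap\Delta_{10}^\preccurlyeq$, whence by (S$\Delta$C) it is generated by a curve (CK)-equivalent to $(3;1^9,0)$; as $(3;1^9,0)$ is primitive and the linear systems of $(3;1^9,0)$ and of its (CK)-transforms are $0$-dimensional, such a ray has a unique irreducible generator, so $C$ is (CK)-equivalent to $(3;1^9,0)$, which is exactly the case allowed by (SN), with $g(C)=1$ and $C^2=0$.

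For the converse, (SN)$\Rightarrow$(S$\Delta$C), let $\xi$ be an effective integral class with $\xi^2=0$, $\xi\cdot L>0$, $\xi\cdot K_{10}\ge 0$; I must show $[\xi]$ is generated by a curve (CK)-equivalent to $(3;1^9,0)$, and the decisive step --- the main obstacle --- is the \emph{Claim that $\xi$ is nef}. To prove it, take the Zariski decomposition $\xi=P+N$. As in the proof of Lemma~\ref{lem:nef}, $0=\xi^2=P^2+N^2$ gives $P^2=-N^2\ge 0$, and if $N\neq 0$ the negative-definiteness of $N$ forces $P^2>0$, so $P$ is nef and big. Then $P^\perp$ is negative definite, so it contains only finitely many classes of square $-1$; by the Hodge index theorem and \cite{dF1} the irreducible curves orthogonal to $P$ are $(-1)$-curves, and they are pairwise disjoint (two meeting $(-1)$-curves would span a rank-two sublattice of $P^\perp$ that is not negative definite). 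Contracting them gives a birational morphism $g\colon X_{10}\to Y$ onto a smooth projective rational surface with $P=g^*A$, $A$ ample. Since $g$ contracts at least one curve (a component of $N$), $K_Y^2\ge K_{X_{10}}^2+1=0$, so $h^0(-K_Y)\ge\chi(-K_Y)=1+K_Y^2\ge 1$ and $-K_Y$ is a nonzero effective divisor; hence $A\cdot K_Y<0$ and, by the projection formula, $P\cdot K_{10}=A\cdot K_Y<0$. Each component of $N$ being a $(-1)$-curve, $N\cdot K_{10}<0$ as well, so $\xi\cdot K_{10}=P\cdot K_{10}+N\cdot K_{10}<0$, contradicting the hypothesis. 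Therefore $N=0$ and $\xi$ is nef.

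Granting the Claim, write $\xi=\sum_i a_iC_i$ with the $C_i$ prime and $a_i>0$. Nefness and $0=\xi^2=\sum_i a_i(\xi\cdot C_i)$ force $\xi\cdot C_i=0$ for all $i$, and the Hodge index theorem for a nef class of square zero then gives $C_i^2\le 0$, with $C_i^2=0$ only if $C_i$ is proportional to $\xi$. If all $C_i$ had $C_i^2<0$ they would all be $(-1)$-curves by \cite{dF1} and $\xi\cdot K_{10}=-\sum_i a_i<0$, a contradiction; so some $C_i$ has $C_i^2=0$ and is proportional to $\xi$. Then $\xi-a_iC_i$ is effective, has support disjoint from $C_i$, and is proportional to $C_i$, hence is zero; so $\xi=a_iC_i$. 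Finally $C_i$ is irreducible with $C_i^2=0$ and $g(C_i)=1+\frac{1}{2}C_i\cdot K_{10}\ge 1$, so (SN) applied to $C_i$ shows $C_i$ is (CK)-equivalent to $(3;1^9,0)$; thus $[\xi]=[C_i]$ is generated by such a curve, as required. The only place where the argument transcends formal lattice theory is the Claim, where one genuinely uses de Fernex's theorem together with the effectivity of $-K$ on rational surfaces with $K^2\ge 0$.
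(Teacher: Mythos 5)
Your argument is correct in substance, but it is considerably more elaborate than, and genuinely different in route from, the paper's very terse proof. The paper disposes of (SN)\,$\Rightarrow$\,(S$\Delta$C) in three lines: given an effective $C=n_1C_1+\cdots+n_hC_h$ generating a rational ray in $\partial\cQ_{10}^{\succcurlyeq}$, it asserts ``$C_i\cdot C_j\ge 0$, hence $C_i\cdot C_j=0$ for all $1\le i\le j\le h$,'' and then that $h=1$. The assertion $C_i^2\ge 0$ for each component is not automatic, since under (SN) together with de Fernex a component could a priori still be a $(-1)$-curve; the paper leaves this implicit. Your proof is precisely a careful justification of that step: Zariski decomposition $\xi=P+N$, negative-definiteness of $P^\perp$ when $P^2>0$, de Fernex to identify the orthogonal curves as pairwise-disjoint $(-1)$-curves, contraction to $Y$ with $K_Y^2\ge 0$, effectivity of $-K_Y$, and the resulting sign contradiction $\xi\cdot K_{10}<0$. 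This is a legitimate and self-contained way to establish nefness of $\xi$, and I think it is a real improvement in explicitness over what the paper records. The forward direction you give matches the paper's ``clearly'' up to the same level of detail.

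Two small points worth tightening. First, when you contract the $(-1)$-curves in $P^\perp$ and invoke Nakai--Moishezon to conclude $A=g_*P$ is ample, you should say why $A\cdot C'>0$ for every irreducible $C'$ on $Y$: the strict transform of $C'$ is not among the contracted curves, so $P$ meets it positively since $P^\perp$ contains only the contracted curves. Second, at the very end, the step ``$\xi-a_iC_i$ is proportional to $C_i$ and has support disjoint from $C_i$, hence is zero'' is not quite a formal consequence of proportionality of numerical classes: two disjoint effective divisors can lie in the same ray. What saves it (and what the paper's ``clearly implies $h=1$'' also implicitly uses) is that $h^0$ of every positive multiple of a (CK)-transform of $(3;1^9,0)$ equals $1$ for very general points (non-speciality of $(3m;m^9)$ for $\le 9$ general points), so there is a unique effective divisor in that class. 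In any case, the statement of (S$\Delta$C) only requires the ray $[\xi]$ to be generated by \emph{some} curve (CK)-equivalent to $(3;1^9,0)$, which you already have from the existence of the component $C_i$ with $C_i^2=0$ and $g(C_i)\ge 1$; so the conclusion does not actually depend on showing $\xi=a_iC_i$.
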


\begin{proof} 
If  (S$\Delta$C) holds then clearly (SN) holds. Conversely, assume (SN) holds, consider a
rational effective ray in   $\partial\cQ_{10}^\succcurlyeq$ and let $C$ be an effective divisor
in the ray. Then $C=n_1C_1+\cdots +n_hC_h$, with $C_1,\ldots, C_h$ distinct irreducible curves 
and $n_1,\ldots, n_h$ positive integers. One has  $C_i\cdot C_j\ge 0$, hence
$C_i\cdot C_j=0$ for all $1\le i\le j\le h$. This  clearly implies $h=1$, hence the assertion.  \end{proof}

By the proof of Proposition \ref {conj:CHMR}, any good ray  gives 
a constraint on $\Mor(X_n)$, so it is useful to find  good rays.  Even better would be to find 
wonderful rays.  We will soon give more reasons for searching for such rays (see \S \ref {sec:goodrays}). 

\begin{example}\label{ex:seq}  Consider the family of linear systems
\[ \cB=\{ B_{q,p}:= (9q^2+p^2; 9q^2-p^2, (2qp)^9):   (q,p)\in \bbN^2, q\le p \}\]
generating rays in $\partial\cQ_{10}^\succcurlyeq$. Take a sequence $\{(q_n,p_n)\}_{n\in \bbN}$ such that $\lim_n \frac {p_n+q_n}{p_n}=\sqrt {10}$. For instance take $\frac {p_n+q_n}{p_n}$ to be the convergents of the periodic continued fraction expansion of $\sqrt {10}=[3; \overline{6}]$, so that 
\[ p_1=2, \; p_2=13, \; p_3=80,\ldots \;\; q_1=1, \; q_2=6, \; q_3=37, \ldots. \]
The sequence of rays $\{[B_{q_n,p_n}]\}_{n\in \bbN}$ converges to the Nagata ray $\nu_{10}$. If we knew that the rays of this sequence are good, this would imply (N) for $n=10$. 
\end{example}

A way of searching for good rays is the following (see \S \ref {sec:goodraysplus}). Let $(m_{1}\ldots, m_{n})$ be a (SNS) multiplicity  vector, with $d=\sqrt {\sum_{i=1}^n m_i^2}$ an integer such that $3d<\sum_{i=1}^n m_i$. Then $[d;m_{1}\ldots, m_{n}]$ is a good ray. We will apply this idea in \S \ref {ssec:base}. 

\renewcommand{\thesection}{\arabic{section}}
\renewcommand{\theequation}{\thesection.\arabic{equation}}
\setcounter{equation}{0}

\renewcommand{\thetheorem}{\thesection.\arabic{theorem}}
\setcounter{theorem}{0}

\section{Good rays and counterexamples to Hilbert's 14-th problem}\label{sec:goodrays}

In this section we show that  any good or wonderful  ray for $n\ge 10$ provides a counterexample to Hilbert's 14-th problem.
The proof follows  Nagata's original argument in \cite {Nag59}, which we briefly recall.

Let $\bF$ be a field. Let ${\bf X}=(x_{ij})_{1\le i\le 3;1\le j\le n}$ be a matrix of indeterminates over $\bF$ and consider the field
$k=\bF [{\bf X}]:= \bF[x_{ij}]_{1\le i\le 3;1\le j\le n}$ (we use similar vector notation later). The points $x_j=[x_{1j}, x_{2j}, x_{3j}]\in \bbP^2_{k}$, $1\le j\le n$, may be 
seen as $n$ general points of $ \bbP^2_{\bF}$.   The subspace $V\subset k^n$ formed by all vectors $\bfb=(b_1,\ldots,b_n)$ such that
${\bf X}\cdot \bfb^t={\bf 0}$,   is said to be \emph{associated} to $x_1,\ldots, x_n$. 

Fix  a multiplicities vector $\bfm=(m_1,\ldots, m_n)$ of positive integers and consider the subgroup $H$ of the multiplicative group $(k^*)^n$ formed by all  vectors $\bfc=(c_1,\ldots,c_n)$ such that $\bfc ^{\bf m}:=  c_1^{m_1} \cdots c_n^{m_n}=1$. We set $\delta=\sqrt {\sum_{i=1}^n m_i^2}$.

Let $\bfu=(u_1,\ldots, u_n)$ and $\bfv=(v_1,\ldots, v_n)$ be vectors of indeterminates over $k$, and consider $k[\bfu,\bfv]$.  The group $G=H\times V$ acts on the $k$-algebra $k[\bfu,\bfv]$ in the following way: if $\sigma=(\bfc,\bfb)$ and $c=c_1\cdots c_n$, then
\[ \sigma (u_i)=\frac {c_i}c(u_i+b_iv_i), \;  \sigma (v_i)=c_iv_i\; {\text {for}}\; 1\le i\le n.\]

\begin{theorem}\label{thm:bignag1} If $(\delta;m_1,\ldots, m_n)$ generates a good or a wonderful ray, then the $k$-algebra $A=k[\bfu,\bfv]^G$ is not finitely generated. 
\end{theorem}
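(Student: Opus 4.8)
The plan is to follow Nagata's original strategy from \cite{Nag59}, translating the non-effectivity of the good (or wonderful) ray into the failure of finite generation of the ring of invariants $A = k[\bfu,\bfv]^G$. The first step is to identify $A$ with a more geometric object. Writing $w_i = v_i$ and thinking of the $u_i$ as affine coordinates, the subalgebra of $k[\bfu,\bfv]$ generated by the bilinear-type monomials that are invariant under the torus part $H$ and the translation part $V$ should be interpreted, via the points $x_1,\dots,x_n$, as (a cone over) the section ring $\bigoplus_{x\ge 0} H^0\bigl(X_n,\, \cO_{X_n}(xd;xm_1,\dots,xm_n)\bigr)$, or more precisely a multi-graded enlargement thereof. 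The key point is that the $G$-invariants organize themselves into graded pieces indexed by the degree $x$, and the dimension of the $x$-th piece is governed by $h^0$ of the linear system $(xd;xm_1,\dots,xm_n)$ on $X_n$; here $d=\delta$ when the ray is good and $\delta$ must be replaced by its integer ambient interpretation when it is only wonderful (in the irrational case one instead uses the lattice of classes proportional to the ray, but the argument is the same).

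The second step is the heart of the matter: show that if $A$ were finitely generated, then the section ring above would be finitely generated in a way that forces the good/wonderful ray to be effective, contradicting its definition. Concretely, finite generation of $A$ would produce generators lying in degrees $\le x_0$ for some $x_0$, so every invariant of degree $x > x_0$ would be a polynomial in lower-degree invariants. Pushing this through the dictionary of the previous paragraph, one deduces that $H^0(X_n,\cO_{X_n}(xd;xm_1,\dots,xm_n))$ is nonzero for some $x > 0$ — equivalently, that the class $(d;m_1,\dots,m_n)$ lies in some multiple on an effective divisor, i.e. the ray $[d;m_1,\dots,m_n]$ is effective. But a good ray is by definition a rational non-effective ray in $\partial\cQ_n$, and a wonderful ray is an irrational nef ray in $\partial\cQ_n$, which by Lemma \ref{lem:wonder} (together with the fact that a nef class on $\partial\cQ_n$ has self-intersection $0$ and anticanonical degree $9-n<0$, hence cannot be effective when $n\ge 10$) is likewise non-effective. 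This contradiction finishes the proof.

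The step I expect to be the main obstacle is making the dictionary between $G$-invariants and sections of line bundles on $X_n$ completely precise, and in particular controlling the behaviour of the translation subgroup $V$. The torus part $H$ is easy: $H$-invariance of a monomial $\bfc^{\bfa}\mapsto \bfc^{\bfa}$ is exactly the condition that the exponent vector $\bfa$ is a multiple of $\bfm$, which is what produces the grading by $x$. The subtle part is the action of $V$ by the ``shearing'' $\sigma(u_i) = \tfrac{c_i}{c}(u_i+b_iv_i)$, $\sigma(v_i)=c_iv_i$: one must check that taking $V$-invariants of the $x$-th graded piece of $k[\bfu,\bfv]^H$ cuts out precisely the polynomials in $\bfu,\bfv$ that, after homogenizing, vanish to order $xm_i$ at the point $x_i$ — i.e. that the linear conditions imposed by $V$ are exactly the fat-point conditions defining $\cL(xd;xm_1,\dots,xm_n)$. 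This is where one uses that $V$ is the subspace associated to $x_1,\dots,x_n$, so that the $b_i$ encode linear relations among the coordinates of the points; the verification is essentially Nagata's, but it must be carried out with the multiplicities $m_i$ general rather than all equal to $1$.

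With that dictionary in hand, the remaining implications are formal: finite generation of $A$ $\Rightarrow$ bounded generators $\Rightarrow$ the section ring of the ray is nontrivial in positive degree $\Rightarrow$ the ray is effective $\Rightarrow$ contradiction with goodness/wonderfulness. One should also note, as Nagata does, that one may reduce to the case where $\bF$ is the prime field and the points are ``as general as possible'' without loss, since specializing only makes the invariant ring larger, so non-finite-generation over the generic $k$ is the relevant statement; this is automatic from the setup since $k = \bF[\bfX]$ already has the $x_{ij}$ as indeterminates.
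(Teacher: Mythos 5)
Your proposal goes wrong at the central step, and the gap is not merely one of precision.

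First, the object you want to identify $A$ with is not the section ring $\bigoplus_{x\geq 0}H^0(X_n,\cO_{X_n}(xd;xm_1,\ldots,xm_n))$. Following Nagata's Lemmas 2 and 3 (which the paper invokes), $A$ is the ring of finite sums $\sum_{i\in\bbZ}a_i t^{-i}$ with $a_i\in\fa_i=\bigcap_{j}\mathfrak p_j^{im_j}\subset S=k[\bfw]$, where $t=\bfv^{\bfm}$. For $i\leq 0$ one has $\fa_i=S$, so $A$ contains the entire polynomial ring $S$ and is a large algebra regardless of whether the Nagata-type linear systems have sections. It is a symbolic-Rees-type algebra for the family of fat-point ideals, not the section ring of the ray.

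Second, and decisively, the contradiction you draw cannot work. You argue: finite generation of $A$ $\Rightarrow$ bounded generators $\Rightarrow$ $H^0(X_n,\cO_{X_n}(xd;\ldots))\neq 0$ for some $x>0$ $\Rightarrow$ the ray is effective. But the section ring of a non-effective ray is simply $k$ in all positive degrees, which is \emph{trivially} finitely generated; non-effectivity gives no obstruction to finite generation of a section ring, so no contradiction is available along this route. The actual mechanism is Nagata's Lemma 3: $A$ is not finitely generated if for every $m>0$ some power satisfies $\fa_m^\ell\neq\fa_{m\ell}$. This is verified via the initial-degree function $\alpha$. One has $\alpha(\fa_m^\ell)=\ell\,\alpha(\fa_m)$, so $\alpha(\fa_m^\ell)/(m\ell)$ is constant in $\ell$, while the virtual-dimension count gives $\lim_{m\to\infty}\alpha(\fa_m)/m\leq\delta$ and nefness gives $\alpha(\fa_m)/m\geq\delta$, hence the limit is exactly $\delta$. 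The crucial extra input is Lemma \ref{lem:wonder}: because the ray is good or wonderful, $\alpha(\fa_m)/m>\delta$ \emph{strictly} for every $m$. This strict inequality, together with the limit being $\delta$, forces $\alpha(\fa_{m\ell})<\alpha(\fa_m^\ell)$ for $\ell\gg 0$, i.e.\ $\fa_m^\ell\neq\fa_{m\ell}$. None of this appears in your proposal: you never reduce to a statement about ideal powers, and you never use the strict inequality $d>\delta$ that goodness/wonderfulness provides. (As a minor additional point, non-effectivity of a wonderful ray follows immediately from irrationality --- an effective class is integral --- not from the $K_n$-degree argument you sketch.)
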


\begin{proof}
The elements  $t:= \bfv^\bfm$ and 
\[ w_i= \sum_{j=1}^n x_{ij} (v_1\cdots v_{j-1} u_jv_{j+1}\cdots v_n),\; \text{for}\; 1\le i\le 3\]
are in $A$. Set $\bfw=(w_1,w_2,w_3)$, which is a vector of indeterminates on $k$. Then $S:= k[\bfw]$ is the homogeneous coordinate ring of 
$\bbP^2_{k}$.
 Imitating the argument in \cite [Lemma 2] {Nag59}, one proves that 
$A= k[\bfu, \bfv]\cap k(\bfw,t)$ and, as a consequence (see \cite [Lemma 3] {Nag59}), that $A$ consists of all sums $\sum_{i\in \bbZ} a_i t^{-i}$, such that  $a_i\neq 0$ for finitely many $i\in \bbZ$, $a_i\in S$  for  all $i\in \bbZ$, and
$a_i\in \fa_i:= \bigcap_{j=1}^n \mathfrak{p}_j^{im_j}$ where
$\mathfrak{p}_j$ is the homogeneous ideal of the point $x_j$.

By \cite [Lemma 3] {Nag59}, to prove that $A$ is not finitely generated it suffices to show that 
\begin{equation}\label{eq;clain}
\text {for all positive}\;  m\in \bbZ,\;  \text{there is a positive}\; \ell\in \bbZ\; \text{such that}\;  \fa_m^\ell\neq \fa_{m\ell}.
\end{equation} 
This is proved as in  \cite [Lemma 1] {Nag59}. Indeed, let $\alpha(\fq)$ be the minimum degree of a  polynomial in a homogeneous ideal $\fq$ of $S$. Since 
\[ v(d; mm_1,\ldots, mn_n)=\frac {d^2-m^2 \delta^2} 2 +\ldots, \]
where $\ldots$ denote lower degree terms, we have $\lim_{m\to \infty} \frac {\alpha(\fa_m)}m \le \delta$. Since 
$(\delta;m_1,\ldots, m_n)$ is nef, we have $\frac {\alpha(\fa_m)}m \ge \delta$ for all positive integers $m$. Hence $\lim_{m\to \infty} \frac {\alpha(\fa_m)}m = \delta$. By Lemma \ref {lem:wonder}, one has
\[\frac {\alpha(\fa_m^\ell)}{m\ell}= \frac {\alpha(\fa_m)}{m} >\delta\]
from which \eqref {eq;clain} follows. \end{proof}

\renewcommand{\theequation}{\thesection.\arabic{equation}}
\setcounter{equation}{0}

\renewcommand{\thetheorem}{\thesubsection.\arabic{theorem}}
\setcounter{theorem}{4}

\section{Existence of good rays}\label{sec:goodraysplus}

\subsection{The existence theorem} 

\begin{theorem}[Existence Theorem (ET)]\label{thm:main}
For every $n\ge 10$, there are good rays in $\Mor(X_n)$.
\end{theorem}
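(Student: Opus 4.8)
The plan is to produce, for each $n\ge 10$, a rational non-effective ray lying on $\partial\cQ_n$ (a good ray in the sense of Lemma \ref{lem:nef}). Following the recipe announced just before Example \ref{ex:seq}, it suffices to exhibit a \emph{stably non-special} multiplicity vector $(m_1,\ldots,m_n)$ for which $\delta=\sqrt{\sum_i m_i^2}$ is an integer and $3\delta<\sum_i m_i$; then $[\delta;m_1,\ldots,m_n]$ automatically lies on $\partial\cQ_n$ (the equality $\delta^2=\sum m_i^2$ gives self-intersection zero and $\delta>0$ puts it in the nonnegative cone), it has strictly negative intersection with $K_n$, and SNS forces every homogeneous multiple $(d;xm_1,\ldots,xm_n)$ to have the non-special, hence non-positive, dimension predicted by $v$, which is negative because $d^2<x^2\delta^2$ whenever $d\le x\delta$ — so the ray is non-effective. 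For small $n$ one can read such vectors off low cases (e.g. Nagata's own square cases give SNS vectors, and one can pad with the construction in the commented-out Proposition), so the real content is a uniform construction covering all $n\ge 10$.

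The key steps, in order, are: (1) Establish a degeneration/specialization lemma of Nagata type: if the $n$ points are specialized so that a linear system $(\delta;m_1,\ldots,m_n)$ (and all its homogeneous multiples) degenerates to a unique, reduced, irreducible divisor — or more precisely so that one can bound $h^0$ of each multiple from above by the virtual dimension — then for \emph{general} points the system is non-special, giving SNS. This is exactly the mechanism used in the (commented-out) Proposition \ref{prop:ex}: take a nodal plane curve $C$ of degree $d$ with $s$ nodes, let $m_i=2$ at the nodes and $m_i=1$ at $r$ further general points on $C$, and check that the homogeneous multiple $\cS_x$ consists only of $xC$. (2) Verify the numerics: with $d^2>4s$, writing $d^2=4s+r$ and $n=s+r$, one checks $\delta^2=4s+r=d^2$, so $\delta=d$ is an integer, and $3d<2s+r$ reduces to $3d<2s+(d^2-4s)=d^2-2s$, i.e. $s<(d^2-3d)/2=d(d-3)/2$, which holds in the relevant range since $d(d+3)\ge 6s$ forces $s\le d(d+3)/6$ and $d(d+3)/6<d(d-3)/2$ for $d\ge 5$ after clearing: $d+3<3(d-3)\iff d>6$, with $d=5,6$ handled directly. (3) Show the resulting $n=s+r$ sweeps out every integer $\ge 10$: as $d$ ranges over integers $\ge 5$ and $s$ ranges over $\lceil d^2/4\rceil+1\le \dots$ down to where $C$ has geometric genus $\ge 2$ and $d(d+3)\ge 6s$ still holds, the value $n=s+(d^2-4s)=d^2-3s$ takes a contiguous block of integer values; overlapping the blocks for consecutive $d$ (and checking the block for $d=5$ or $d=6$ already reaches down to $10$) gives all $n\ge 10$. (4) Conclude via Lemma \ref{lem:nef} that each such $[\delta;m_1,\ldots,m_n]\in\Mor(X_n)$'s ambient space is a good ray, hence an extremal nef non-effective ray, and in particular lies in $\Mor(X_n)$'s boundary structure as asserted.

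The main obstacle I expect is step (1)–(3) done \emph{uniformly}: proving SNS, i.e. that \emph{all} homogeneous multiples $(d';xm_1,\ldots,xm_n)$ (not just $(xd;x^n\cdot\text{stuff})$ but arbitrary $d'$) are non-special after moving the points to general position. The specialization shows the degenerate system is as small as possible only along the homogeneous slice $(xd;2x,\ldots)$; controlling arbitrary $(d';xm_1,\ldots,xm_n)$ requires either a genuinely stronger vanishing input (a Bogomolov-type or blow-up-of-a-K3/abelian-surface argument, or the ``$\cS_x$ consists of $xC$'' computation upgraded to bound $h^0(d';2x^s,x^r)$ for all $d'$) or restricting attention to the part of SNS actually needed — indeed, re-examining the application, one only needs non-effectivity of the \emph{single} class $[\delta;m_1,\ldots,m_n]$ and its integer multiples, which is precisely what the degeneration gives directly, so the cleanest route may be to bypass full SNS and argue non-effectivity of the ray head-on from the unique-divisor property, then invoke Lemma \ref{lem:nef}. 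The secondary nuisance is the arithmetic bookkeeping to guarantee no gaps in the attained values of $n$; this is elementary but must be checked, with the initial cases $n=10,11,12$ possibly needing the explicit $B_{q,p}$ systems of Example \ref{ex:seq} or Nagata's $n=16$ square case combined with ad hoc low-$n$ constructions.
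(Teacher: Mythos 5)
Your step (3) contains a decisive arithmetic error. Writing $n=s+r$ with $r=d^2-4s$ gives $n=d^2-3s$, so as $s$ varies with $d$ fixed the value of $n$ jumps by $3$, not by $1$; it does not ``take a contiguous block of integer values.'' Consequently $n\equiv d^2\pmod 3$, and since $d^2\pmod 3\in\{0,1\}$, the construction based on $(d;2^s,1^r)$ can never produce $n\equiv 2\pmod 3$ — i.e., $n=11,14,17,20,\dots$ are permanently out of reach, and the overlapping-blocks argument cannot repair this. You flag the bookkeeping as ``elementary but must be checked'' and then do not check it; it fails. (Your fallback references do not help either: Example~\ref{ex:seq} concerns only $n=10$ and, as the paper itself says there, the goodness of the $B_{q,p}$ is not established, and the Nagata square case gives only $n=16,25,\dots$.)

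There is a second, logical gap in step (1). You propose to prove non-effectivity of the ray $[d;2^s,1^r]$ ``head-on from the unique-divisor property'' after specializing the points onto a nodal curve $C$. But showing that the specialized system $\cS_x$ consists of the single divisor $xC$ gives $h^0=1$ at the special configuration, and upper semicontinuity of $h^0$ only yields $h^0\le 1$ at the general configuration — it does \emph{not} force $h^0=0$ there. The paper is aware of exactly this issue: in Remark~\ref{rem:prob}(ii) the authors show that $[d;2^s,1^r]$ is \emph{nef} (because the proper transform of $C$ is a prime divisor of self-intersection $0$), but they explicitly state they cannot prove goodness in general and reduce it to a conjecture about $[2k+1;2^{k^2+k},1]$. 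For comparison, the paper's actual proof of the Existence Theorem proceeds quite differently: it establishes SNS multiplicity vectors $(5,4^9)$, $(3,2^{10})$, $(2^8,1^4)$ for the base cases $n=10,11,12$ (Propositions~\ref{prop:10}--\ref{prop:12}) via the $2$-throw degeneration and matching analysis, and then bootstraps to all $n\ge 13$ by the induction step of Corollary~\ref{cor:main}, which rests on the BNC degeneration criterion (Proposition~\ref{prop:main}) applied with $s=4$, $D=(2;1^4)$. Without a substitute for these two steps, your proposal does not prove the theorem.
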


The proof  goes by induction on $n$ (see \S \ref {ss:proof}).
The \emph{induction step} is based on the following proposition:

\begin{proposition}   \label{prop:main} Set $n=s+t-1$, with $s,t$ positive integers. Assume $D=(\delta;\mu_1,\ldots, \mu_s)\in \Nef(X_s)$ with $\mu_1,\ldots, \mu_s$ nonnegative, rational numbers. Let $\nu_1,\ldots, \nu_s$ 
be nonnegative rational numbers
such that $m_1=\sum_{i=1}^s \mu_i\nu_i$ is an integer, and
let  $d, m_2,\ldots, m_t$ be nonnegative rational numbers such that 
$C=(d;m_1,\ldots,m_t)$ generates a non-effective ray in $N_1(X_t)$. Then for every rational number $\eta\ge \delta$, $C_\eta=(d; \eta\nu_1,\ldots, \eta\nu_s, m_2,\ldots, m_t)$ generates a non-effective ray in $N_1(X_n)$. 
\end{proposition}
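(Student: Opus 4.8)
The plan is to argue by degeneration, in the spirit of Ciliberto--Miranda, combined with semicontinuity of $h^0$. A ray is non-effective precisely when no integral class on it is effective, and since $km_1\in\bbZ$ whenever $km_j,kd,k\eta\nu_i\in\bbZ$ (here the hypothesis $m_1=\sum_i\mu_i\nu_i\in\bbZ$ is used), it suffices to prove $h^0(X_n,kC_\eta)=0$ for every positive integer $k$ for which $kC_\eta$ is integral; for such $k$ the class $kC$ is integral as well, and $h^0(X_t,kC)=0$ because $C$ generates a non-effective ray. If $D=0$ then $\delta=m_1=0$ and both $C$ and $C_\eta$ are pulled back from $X_{t-1}$, so the statement is trivial; hence assume $D\neq 0$, i.e.\ $\delta>0$.

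First I would build the degeneration by letting the $s$ general points carrying the multiplicities $\eta\nu_1,\dots,\eta\nu_s$ collide with one of the $t$ general points $y_1,\dots,y_t$ of $\bbP^2$, say $y_1$, along $s$ distinct general tangent directions, while keeping $y_2,\dots,y_t$ fixed. Concretely, inside $\bbP^2\times\Delta\to\Delta$ one blows up the point $(y_1,0)$, the constant sections over $y_2,\dots,y_t$, and $s$ further sections $t\mapsto y_1+tv_i+t^2w_i+\cdots$, with the $v_i$ general so that their strict transforms meet the exceptional divisor $\cE_0\cong\bbP^2$ of the first blow-up transversally at $s$ general points, and with the $w_i$ (and higher jets) chosen so that the $n=s+t-1$ marked points of a general fibre are in general position. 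The resulting family $\cX\to\Delta$ has smooth total space, general fibre $X_n$, and reduced central fibre $\cX_0=X_t\cup_R X_s$, where $X_t=\mathrm{Bl}_{y_1,\dots,y_t}\bbP^2$ contains $R=E_1$ (so $R^2=-1$ on $X_t$) and $X_s$ is $\cE_0$ blown up at the $s$ collision points and contains $R$ as a general line (so $R^2=+1$ on $X_s$, with the $s$ points of $X_s$ general and off $R$); the gluing is $d$-semistable. A class $\cL$ on $\cX$ restricting to $C_\eta$ on a general fibre can be chosen so that $\cL|_{X_t}=C$; one then computes, using that the $s$ colliding sections are transverse to $\cE_0$, that $\cL|_{X_s}=(m_1;\eta\nu_1,\dots,\eta\nu_s)$, consistently with $\cL|_{X_t}\cdot R=\cL|_{X_s}\cdot R=m_1$.

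With this in place I would conclude as follows. By semicontinuity $h^0(X_n,kC_\eta)\le h^0(\cX_0,k\cL)$, and the restriction sequence $0\to (k\cL|_{X_s})(-R)\to k\cL\to k\cL|_{X_t}\to 0$ on $\cX_0$ together with $h^0(X_t,kC)=0$ gives $h^0(\cX_0,k\cL)=h^0\!\big(X_s,\,k\cL|_{X_s}-R\big)$. On $X_s$ one computes
\[
\big(k\cL|_{X_s}-R\big)\cdot D=(km_1-1;k\eta\nu_1,\dots,k\eta\nu_s)\cdot D=\delta(km_1-1)-k\eta\sum_i\mu_i\nu_i=km_1(\delta-\eta)-\delta\le-\delta<0
\]
for every $k\ge 1$, because $\eta\ge\delta$, $m_1\ge 0$ and $\delta>0$. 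Since $D$ is nef on $X_s$, a class meeting it negatively is non-effective, so $h^0(X_s,k\cL|_{X_s}-R)=0$ and hence $h^0(X_n,kC_\eta)=0$, as wanted.

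The main obstacle is the middle step: realising the collision of the $s$ points as a flat degeneration with smooth total space and reduced, $d$-semistable central fibre of the stated shape; making sure a general fibre still carries $n$ \emph{general} points (the $v_i$ and $w_i$ must be chosen compatibly); and, above all, pinning down the limit class — establishing $\cL|_{X_t}=C$ and $\cL|_{X_s}=(m_1;\eta\nu_1,\dots,\eta\nu_s)$, where it is the transversality of the colliding sections to $\cE_0$ that produces the clean form of $\cL|_{X_s}$ and makes $m_1=\sum_i\mu_i\nu_i$ the correct normalisation. Once these are secured, semicontinuity, the restriction sequence and nefness of $D$ complete the argument with no further input.
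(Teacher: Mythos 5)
Your argument is correct, and it uses the same underlying degeneration as the paper (the Ciliberto--Miranda two-component degeneration obtained by colliding $s$ of the points into an exceptional $\bbP^2$). The difference lies entirely in the concluding mechanism. The paper applies its Basic Non-Effectivity Criterion: for \emph{every} twisting integer $a$, assuming $\cL(a)$ is centrally effective, nefness of $D$ forces $a\ge xm_1$, after which effectivity of $\cL_Z=(xd;a,xm_2,\ldots,xm_t)$ would yield effectivity of $xC$, a contradiction. You instead fix the single twist $a=km_1$, observe that the restriction to $Z$ is then exactly $kC$ (non-effective by hypothesis) and that the kernel class $k\cL|_{X_s}-R$ meets the nef class $D$ negatively, hence has $h^0=0$; then the restriction sequence gives $h^0(\cX_0,k\cL)=0$ and semicontinuity finishes. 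Both routes hinge on precisely the same inequality coming from $D\cdot(\,\cdot\,)$ and $\eta\ge\delta$, so the core computation is identical; what you buy with your version is that it is a self-contained semicontinuity argument, avoiding the need to quantify over all twists (and hence not invoking the BNC from \cite{CM11} as a black box), at the cost of having to construct and identify one explicit limit line bundle rather than merely ruling out central effectivity. You also handle the degenerate case $D=0$ explicitly, which the paper leaves implicit. Two small points of notation: you occasionally write $\cL|_{X_s}=(m_1;\eta\nu_1,\ldots)$ and $\cL|_{X_t}\cdot R=m_1$ where factors of $k$ should appear, and the equality $h^0(\cX_0,k\cL)=h^0(X_s,k\cL|_{X_s}-R)$ (rather than just $\le$) deserves the one-line justification that the map to $H^0(X_t,kC)=0$ is zero.
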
 

The proof of Proposition \ref {prop:main} relies on a degeneration argument introduced in \cite {CM98a}
(see also \cite {CDMR11, CM11}), which is reviewed in \S \ref {ssec:1stdeg}.
The next corollary  shows how  Proposition \ref {prop:main} may be applied  to inductively prove Theorem \ref {thm:main}. 

\begin{corollary}\label{cor:main} Same setting as in Proposition \ref {prop:main}. Assume that:
\begin{itemize}
\item [(i)] $C$ generates a good ray in $N_1(X_t)$;
\item [(ii)] $D\in \Nef(X_s)$ and $D^2=0$;
\item [(iii)] ${\sum_{i=1}^s\nu_i^2}\ge 1$ (this happens if  $\nu_1,\ldots, \nu_s$ are integers) and
$(\nu_1,\ldots, \nu_s)$ is proportional to $(\mu_1,\ldots, \mu_s)$.
\end{itemize} 
Then  $C_\delta\in \partial\cQ_n^\succcurlyeq$ is nef. If   ${\sum_{i=1}^s\nu_i^2}=1$ then $C_\delta\cdot K_n=C\cdot K_t$. 
\end{corollary}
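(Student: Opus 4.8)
The plan is to deduce Corollary \ref{cor:main} from Proposition \ref{prop:main} together with the hypotheses (i)--(iii), by separately checking: (a) $C_\delta$ lies on $\partial\cQ_n$, (b) $C_\delta$ lies in $\Mor(X_n)^\succcurlyeq$, (c) $C_\delta$ is non-effective, and (d) the canonical intersection formula. Then nefness of $C_\delta$ follows at once from Lemma \ref{lem:nef}, since (a) and (c) say that $C_\delta$ generates a rational non-effective ray in $\partial\cQ_n$, i.e., a good ray; and Lemma \ref{lem:nef} further gives that it lies in $\partial\cQ_n^\succcurlyeq$, which is the content of (b). So the real work is (a), (c), (d).

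\medskip

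First I would compute $C_\delta^2$. By construction $C_\delta=(d;\delta\nu_1,\ldots,\delta\nu_s,m_2,\ldots,m_t)$, so
\[
C_\delta^2=d^2-\delta^2\sum_{i=1}^s\nu_i^2-\sum_{j=2}^t m_j^2.
\]
Now invoke hypothesis (i): a good ray lies on $\partial\cQ_t$, so $C^2=d^2-m_1^2-\sum_{j=2}^t m_j^2=0$, giving $d^2-\sum_{j=2}^t m_j^2=m_1^2$. Hence $C_\delta^2=m_1^2-\delta^2\sum_{i=1}^s\nu_i^2$. Next use that $(\nu_1,\ldots,\nu_s)$ is proportional to $(\mu_1,\ldots,\mu_s)$, say $\nu_i=\lambda\mu_i$; then $m_1=\sum\mu_i\nu_i=\lambda\sum\mu_i^2$ and, since $D^2=0$ by (ii), $\sum\mu_i^2=\delta^2$, so $m_1=\lambda\delta^2$. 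Also $\sum\nu_i^2=\lambda^2\sum\mu_i^2=\lambda^2\delta^2$. Therefore $C_\delta^2=\lambda^2\delta^4-\delta^2\cdot\lambda^2\delta^2=0$, which shows $C_\delta\in\partial\cQ_n$. The condition $C_\delta\cdot L=d\ge0$ holds because $d\ge0$ by hypothesis and $d^2=m_1^2+\sum m_j^2\ge 0$ with $d$ a degree; in any case $C_\delta$ lies on the nonnegative branch $\partial\cQ_n^\succcurlyeq$ by Lemma \ref{lem:nef} once non-effectivity is established.

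\medskip

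For non-effectivity I apply Proposition \ref{prop:main} directly with $\eta=\delta\ge\delta$: since $D\in\Nef(X_s)$, $m_1=\sum\mu_i\nu_i$ is an integer (given), and $C=(d;m_1,\ldots,m_t)$ generates a non-effective ray in $N_1(X_t)$ (a good ray is in particular non-effective), the Proposition yields that $C_\delta$ generates a non-effective ray in $N_1(X_n)$. Thus $C_\delta$ generates a rational non-effective ray in $\partial\cQ_n$; Lemma \ref{lem:nef} gives that it is nef and lies in $\partial\cQ_n^\succcurlyeq$. Finally, for the last assertion, assume $\sum\nu_i^2=1$, i.e.\ $\lambda^2\delta^2=1$. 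Then
\[
C_\delta\cdot K_n=-3d+\delta\sum_{i=1}^s\nu_i+\sum_{j=2}^t m_j,
\]
while $C\cdot K_t=-3d+m_1+\sum_{j=2}^t m_j$, so it suffices to show $\delta\sum_{i=1}^s\nu_i=m_1$. Using $\nu_i=\lambda\mu_i$ and $\mu_i=\nu_i/\lambda$ we get $\sum\nu_i=\lambda\sum\mu_i$; but more directly $m_1=\sum\mu_i\nu_i=\tfrac1\lambda\sum\nu_i^2=\tfrac1\lambda=\lambda\delta^2$, and $\delta\sum\nu_i=\delta\lambda\sum\mu_i$. Hmm — I need $\delta\sum\nu_i=m_1=\lambda\delta^2$, i.e.\ $\sum\nu_i=\lambda\delta$; but from $D^2=0$ and $D$ nef one has $D\cdot(\text{line})$ relations, and in fact the cleanest route is: $m_1=\sum\mu_i\nu_i\le\sqrt{\sum\mu_i^2}\sqrt{\sum\nu_i^2}=\delta\cdot1=\delta$ by Cauchy--Schwarz, with equality iff $(\mu_i)\propto(\nu_i)$ — which holds by (iii) — so $m_1=\delta$, and then $\lambda\delta^2=\delta$ gives $\lambda=1/\delta$, consistent with $\lambda^2\delta^2=1$; moreover $\delta\sum\nu_i=\delta\lambda\sum\mu_i$, and since $D^2=0$ forces...

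\medskip

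The main obstacle is precisely this last identity $C_\delta\cdot K_n=C\cdot K_t$ under $\sum\nu_i^2=1$: it reduces to $\delta\sum_i\nu_i=m_1$, and I expect the honest derivation to use that $D=(\delta;\mu_1,\ldots,\mu_s)$ is nef with $D^2=0$ so that $D$ is the pullback of a nef class with zero self-intersection, forcing $\sum\mu_i=\delta$ unless... no — rather, one combines $m_1=\delta$ (Cauchy--Schwarz equality above, from proportionality in (iii)) with the normalization $\sum\nu_i^2=1$ and proportionality to convert $\sum\nu_i$ into the right multiple of $\sum\mu_i$. I would chase the scalars carefully: with $\lambda=1/\delta$ one has $\nu_i=\mu_i/\delta$, hence $\delta\sum\nu_i=\sum\mu_i$, so the claim becomes $\sum_{i=1}^s\mu_i=m_1=\delta$. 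This last equality, $\sum\mu_i=\delta$ for a nef class $D=(\delta;\mu_1,\ldots,\mu_s)$ with $D^2=0$, is exactly the statement that $D$ meets the good ray/Nagata-type boundary optimally; it need not hold for an arbitrary such $D$, so I anticipate that in the intended application $D$ is itself a good or anticanonical-type ray where $\sum\mu_i=\delta$ is built in, and the corollary's hypotheses are meant to be read with that normalization. I would therefore present the computation of $C_\delta^2=0$ and the application of Proposition \ref{prop:main} as the core, cite Lemma \ref{lem:nef} for nefness and for $C_\delta\in\partial\cQ_n^\succcurlyeq$, and finish the $K$-formula by the scalar chase above, flagging that $\sum\nu_i^2=1$ together with proportionality pins $m_1=\delta$ and hence $\eta\nu_i=\mu_i$ at $\eta=\delta$, so the exceptional-multiplicity part of $C_\delta$ literally equals that of $D$ and the two canonical pairings agree termwise outside the shared tail.
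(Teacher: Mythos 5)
Your computation that $C_\delta^2=0$ (from $D^2=0$, proportionality, and $C^2=0$) is correct and matches the paper's. But there is a genuine gap in invoking Proposition~\ref{prop:main} at $\eta=\delta$ and then citing Lemma~\ref{lem:nef}: both require a rationality that the hypotheses do not guarantee. Since $D^2=0$ forces $\delta=\sqrt{\sum_{i=1}^s\mu_i^2}$, the number $\delta$ need not be rational (e.g.\ $D=(\sqrt s;1^s)$ for non-square $s\geq10$), so $C_\delta=(d;\delta\nu_1,\ldots,\delta\nu_s,m_2,\ldots,m_t)$ need not be a rational class. Proposition~\ref{prop:main} is stated only for \emph{rational} $\eta\ge\delta$, so you cannot set $\eta=\delta$ directly; and Lemma~\ref{lem:nef} concerns only \emph{rational} non-effective rays, so it does not apply either --- indeed the paper distinguishes ``good'' (rational) from ``wonderful'' (irrational nef) rays precisely because the corollary must cover both. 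The paper's proof avoids this: it uses Proposition~\ref{prop:main} only for rational $\eta>\delta$, computes $C_\eta\cdot K_n\ge 0$ and $C_\eta^2\le 0$ with equality at $\eta=\delta$, and then runs an approximation argument for nefness: if $C_\delta$ were not nef, one would have an irreducible $E$ with $E^2<0$ and $C_\delta\cdot E<0$; taking rational $\eta$ slightly above $\delta$ and a small negative $\epsilon$, the class $\epsilon E+C_\eta$ has positive self-intersection, hence is effective, and then $C_\eta=(\epsilon E+C_\eta)+(-\epsilon)E$ would be effective, contradicting Proposition~\ref{prop:main}. Your argument needs such a limiting step; otherwise it only covers the special case where $\sum\mu_i^2$ is a perfect square of a rational.

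Your scrutiny of the final claim ($\sum\nu_i^2=1\Rightarrow C_\delta\cdot K_n=C\cdot K_t$) is well placed. As you note, $C_\delta\cdot K_n-C\cdot K_t=\delta\sum_i\nu_i-m_1$; with proportionality and $\sum\nu_i^2=1$ one gets $m_1=\delta$ and $\nu_i=\mu_i/\delta$, so the difference becomes $\sum_i\mu_i-\delta$, which vanishes only when at most one $\mu_i$ is nonzero. The paper's chain of (in)equalities deriving this claim opens with $C_\eta\cdot K_n=C\cdot K_t-m_1+\eta\sum\nu_i^2$, whereas the sum of the multiplicities in $C_\eta$ gives $C_\eta\cdot K_n=C\cdot K_t-m_1+\eta\sum\nu_i$; so the stated equality does not follow without an extra hypothesis. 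The weak inequality $C_\delta\cdot K_n\ge C\cdot K_t\ge0$, however, does hold (because $\sum\mu_i\ge\sqrt{\sum\mu_i^2}=\delta$ for nonnegative $\mu_i$), and that is all that is actually needed to place $C_\delta$ in $\partial\cQ_n^\succcurlyeq$.
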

\begin{proof} For $\eta \geq \delta$, one has
 \[
 \begin{aligned}
C_\eta\cdot  K_n= C\cdot K_t-m_1+\eta \sum_{i=1}^s \nu_i^2=
C\cdot K_t+ \eta  \sum_{i=1}^s \nu_i^2 - \sqrt {\sum_{i=1}^s\mu_i^2} \sqrt {\sum_{i=1}^s\nu_i^2}\ge\cr 
=C\cdot K_t+ \eta  \sum_{i=1}^s \nu_i^2 - \delta \sqrt {\sum_{i=1}^s\nu_i^2}
\ge  C\cdot K_t+ \delta \sqrt {\sum_{i=1}^s\nu_i^2} \big(\sqrt {\sum_{i=1}^s\nu_i^2}-1  \big)\ge C\cdot K_t.
 \end{aligned}
 \]
Since  $C\cdot K_t\ge 0$, then also $C_\eta\cdot K_n\ge 0$, hence $C_\delta\cdot K_n\ge 0$,  and   ${\sum_{i=1}^s\nu_i^2}=1$ yields $C_\delta\cdot K_n=C\cdot K_t$. Moreover
\[C_\eta^2= C^2+ m_1^2-\eta^2 \sum_{i=1}^s\nu_i^2= 
 C^2+  \big ( (\sum_{i=1}^s\mu_i^2)-\eta^2\big ) \sum_{i=1}^s\nu_i^2= C^2+  \big (\delta^2-\eta^2) \sum_{i=1}^s\nu_i^2\le C^2=0,\]
in particular $C^2_\delta=0$.      
  
 Assume $C_\delta$ is not nef, hence there is an irreducible curve $E$ such that $C_\delta\cdot E<0$ and $E^2<0$. Take $\eta\ge \delta$  close to $\delta$ and rational. 
 Set $E_\epsilon=\epsilon E+C_\eta$,  with $\epsilon \in \bbR$. One has $E_\epsilon^2= \epsilon^2E^2+2\epsilon(C_\eta\cdot E)+ C_\eta^2$ and  $(C_\eta\cdot E)^2 -C_\eta^2\cdot E^2>0$ because it is close to $(C_\delta\cdot E)^2>0$. Then
 \[ \tau= \frac {-(C_\eta\cdot E) - \sqrt {(C_\eta\cdot E)^2 -C_\eta^2\cdot E^2}} {E^2}\]
 is negative, close to $0$ and such that $E_\tau^2=0$, and $E_\epsilon^2>0$ for $\epsilon<\tau$ and close to $\tau$. Then for these values of $\epsilon$ the class $C_\eta=E_\epsilon-\epsilon E$ would generate an effective ray, a contradiction.  \end{proof} 

\begin{remark}\label{rem:main}
We will typically apply Proposition \ref {prop:main} and Corollary \ref {cor:main} with $\mu_1=\ldots=\mu_s=1$, $\delta=\sqrt s$ and $\nu_1=\ldots=\nu_s=\frac {m_1}s$.  If
either $s=4, 9$ or $s\ge 10$ and (N) holds, then hypotheses (ii) and (iii) of Corollary \ref {cor:main} hold, and ${\sum_{i=1}^s \nu_i^2}=1$.
Hence, if $C=(d;m_1,\ldots,m_t)$ generates a good  ray in $N_1(X_t)$,  then  $C_\frac s\eta=(d; (\frac {m_1}\eta)^s, m_2,\ldots,m_t)$ generates a noneffective ray  for all rational numbers $\eta\le \sqrt s$, therefore the ray 
$[d; (\frac {m_1}{\sqrt s})^s, m_2,\ldots,m_t]$ is either good or wonderful, in particular it is nef. 

 In this situation, if $C$ is standard and $s\ge 9$, then $C_\delta$ is also standard. The same holds for $s=4$ if $2d\ge 3m_1$. This will be the case for the examples we will provide to prove Theorem \ref {thm:main}, so all of them will be standard. 
\end{remark}

The \emph{base of the induction},  consists in exhibiting SNS multiplicity vectors for $10\le n\le  12$, giving rise to good rays as indicated at the end of \S~ \ref {ssec:more}. They will provide the starting points of the induction for proving Theorem \ref {thm:main} (see \S \ref {ss:proof}). This step is  based on a slight  improvement of the same degeneration technique  used to prove Proposition \ref {prop:main} (see \S \ref {ssec:second}).

\begin{remark} To the best of our knowledge, it is only for a square number of points 
that SNS multiplicity vectors  and good rays
were known so far:  i.e., 
$[d;1^{d^2}]$ is a good ray (see \cite{Nag59}) and 
$(1^{d^2})$ is an SNS multiplicity vector for $d\ge 4$ (see \cite{CM06, Eva07, Roe??}).
\end{remark}

\begin{example}\label{ex:main} Using the goodness of $[d;1^{d^2}]$ and applying Corollary  \ref  {cor:main}, we see that all rays of the form $[dh; h^{d^2-\ell}, 1^{\ell h^2}]$, with $d\ge 4$, $h\ge 1$ and $0\le \ell\le d^ 2$  integers, are good. 
\end{example}

\subsection{The basic degeneration}\label{ssec:1stdeg}
We briefly recall the degeneration we use to prove Theorem \ref {thm:main} (see  \cite {CDMR11, CM98a, CM11}  for details).

Consider $Y \to \mathbb D$ the family
obtained by blowing up  the trivial family $\mathbb D\times {\mathbb{P}}^2\to \mathbb D$ over a disc $\mathbb D$
at a point in the central fiber.
The general fibre $Y_u$ for $u\neq 0$ is a ${\mathbb{P}}^2$,
and the central fibre $Y_0$ is the union of two surfaces
$V \cup Z$, where $V \cong {\mathbb{P}}^2$ is the exceptional divisor and
$Z \cong {\mathbb{F}}_ {1}$ is the original central fibre blown up at a point.
The surfaces $V$ and $Z$ meet transversally along a rational curve $E$
which is the negative section on $Z$ and a line on $V$.

Choose $s$ general points on $V$ and $t-1$ general points on $Z$.
Consider these $n=s+t-1$ points as limits of $n$ 
general points in the general fibre $Y_u$
and blow these points up in the family $Y$, getting a new 
family. We will abuse notation and still denote by $Y$ this new family.
The blow-up creates $n$ exceptional surfaces $R_i$, $1\le i\le n$,
whose intersection with each fiber $Y_u$ is a $(-1)$-curve,
the exceptional curve for the blow-up of that point in the family.
The general fibre $Y_u$ of the new family
is an $X_n$.
The central fibre $Y_{0}$ is the union of $V$ blown-up at 
$s$ general points, and $Z$  blown-up at $t-1$ general points. We will abuse notation
and still denote by  $V$ and $Z$ the blown-up surfaces which are now isomorphic to $X_s, X_t$ respectively. 

Let $\cO_Y(1)$ be the pullback on $Y$ of $\cO_{\bbP^2}(1)$. 
Given a  multiplicity vector $(m_1, \dots,  m_{n})$, a degree $d$ and a \emph{twisting integer} $a$,
consider the line bundle
\[\cL(a)=\cO_Y(1)\otimes \cO_Y(-m_1R_1)\otimes \cdots \otimes \cO_Y(-m_nR_n))\otimes\cO_Y(-aV).\]
Its restriction to $Y_u$ for $u \ne 0$ is 
$(d;m_{1},...,m_{n})$. 
Its restrictions to
$V$ and $Z$ are $\cL_V=(a; m_{1},...,m_{s})$, 
$\cL_Z=(d; a,m_{s+1},...,m_{n})$ respectively. 
Every \emph{limit line bundle}  of $(d; m_{1},...,m_{n})$ on $Y_t$  is the 
restriction to $Y_0=V \cup Z$ of $\cL(a)$ for an integer $a$. 

We will say that a line bundle $\cL(a)$ is \emph{centrally effective}
if its restriction to both $V$ and $Z$ is effective.

\begin{theorem} [The Basic Non-Effectivity Criterion (BNC), (see \cite {CM11})] 
If there is no twisting integer $a$
such that $\cL(a)$ is centrally effective, then $(d;m_{1},...,m_{n})$ is non-effective.
\end{theorem}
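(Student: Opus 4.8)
The plan is to prove the contrapositive: assuming the linear system $(d;m_1,\ldots,m_n)$ is effective on a general $X_n$, I will exhibit a twisting integer $a$ for which $\cL(a)$ is centrally effective. The conceptual point is that a limiting effective divisor on the central fibre may acquire components along $V$ and along $Z$, and shedding such components changes the line bundle by a multiple of $\cO_Y(V)$; the twisting integer is exactly what records this.

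First I would pass from the general fibre to the total space. Effectivity of a linear system with general base points is a closed condition on the configuration of points, and the value of $h^0$ at general points is the minimal one among all configurations; hence effectivity of $(d;m_1,\ldots,m_n)$ forces $h^0(Y_u,\cL(0)|_{Y_u})>0$ for general $u\neq 0$ (note that $\cL(a)|_{Y_u}=(d;m_1,\ldots,m_n)$ is independent of $a$, since $V\subset Y_0$). Therefore $\pi_*\cL(0)$, with $\pi\colon Y\to\mathbb{D}$, has positive generic rank, so after shrinking $\mathbb{D}$ around $0$ there is a nonzero section $\sigma\in H^0(Y,\cL(0))$. (Equivalently, one extends a section of the general fibre after a base change $\mathbb{D}'\to\mathbb{D}$; what matters is producing an honest global section on a model of $Y$.)

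Next I would examine the effective divisor $\Gamma:=\operatorname{div}(\sigma)$ on the smooth threefold $Y$, for which $\cO_Y(\Gamma)\cong\cL(0)$. Writing $\Gamma=\Gamma'+bV+cZ$ with $b=\operatorname{ord}_V(\sigma)\ge 0$ and $c=\operatorname{ord}_Z(\sigma)\ge 0$, the divisor $\Gamma'$ is effective and contains neither $V$ nor $Z$, these being the only components of $Y_0$. Since $V+Z=Y_0$ is the principal fibre of $\pi$ over $0$, one has $\cO_Y(V+Z)\cong\cO_Y$, and therefore $\cO_Y(\Gamma')\cong\cL(0)\otimes\cO_Y(-bV-cZ)\cong\cL(0)\otimes\cO_Y(-(b-c)V)=\cL(a)$ with $a:=b-c\in\bbZ$. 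Because $\Gamma'$ meets $V$ properly, its restriction $\Gamma'|_V$ is an effective divisor on $V$ (possibly empty, that is the trivial bundle $\cO_V$, which is still effective) of class $\cO_Y(\Gamma')|_V=\cL(a)|_V=\cL_V$; so $\cL_V$ is effective, and the same argument applied to $Z$ shows $\cL_Z$ is effective. Thus $\cL(a)$ is centrally effective, contradicting the hypothesis, and $(d;m_1,\ldots,m_n)$ is non-effective.

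I expect the only genuinely delicate point to be getting the twist right, rather than the soft limiting arguments: the naive assertion that effectivity of the limit system passes to both of its restrictions is false without the correction by $\cO_Y(V)$, and the correct value of $a$ must be read off from the orders of vanishing of $\sigma$ along $V$ and $Z$. One should also note the degenerate case in which $\Gamma'$ is disjoint from $V$ or from $Z$, where the relevant restriction is the trivial bundle (still effective), so the conclusion persists; and the trivial system $(0;0^n)$ causes no trouble, since there $\cL(0)$ is itself centrally effective so the hypothesis already fails.
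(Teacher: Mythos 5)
Your proof is correct and follows the standard degeneration argument, which is essentially the one given in the cited reference \cite{CM11}: a section of the general fibre extends (after shrinking the disc, since $\pi_*\cL(0)$ has positive generic rank on a smooth one-dimensional base) to a section of the total space, its divisor is decomposed as $\Gamma'+bV+cZ$ with $\Gamma'$ containing neither component of $Y_0$, and the relation $\cO_Y(V+Z)\cong\cO_Y$ (coming from $\operatorname{div}(\pi^*t)=V+Z$) identifies $\cO_Y(\Gamma')$ with $\cL(a)$ for $a=b-c$, so the restrictions of $\Gamma'$ to $V$ and $Z$ witness central effectivity. You also correctly handle the edge cases where $\Gamma'$ misses a component (restriction is $\cO$, still effective) and where the original system is trivial.
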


\subsection{The proof of the induction step} In this section we use (BNC) to give the:

\begin{proof} [Proof of Proposition \ref {prop:main}]. We need to prove 
that $(xd; x\eta\nu_1,\ldots, x\eta\nu_s, xm_2,\ldots, xm_t)$ is not effective for all positive integers $x$. 

In the  setting of \S \ref {ssec:1stdeg}, fix the multiplicities on $V$ to be 
$x\eta\nu_1,\ldots, x\eta\nu_s$ and on $Z$ to be 
$xm_2,\ldots, xm_t$.  We argue by contradiction, and assume there is a central effective $\cL(a)$. Then $\cL_V=(a; x\eta\nu_1,\ldots, x\eta\nu_s)$
is effective hence $D\cdot \cL_V\ge 0$, i.e.  $a\delta\ge x\eta m_1$,  therefore $a\ge xm_1$. 
Since $\cL_Z=(xd; a, xm_2,\ldots, xm_t)$ is effective, so  is $(xd; xm_1, xm_2,\ldots, xm_t)$ which contradicts 
$C=(d;m_1,\ldots,m_t)$ not being effective.  
\end{proof}

 \subsection{$2$-throws}\label{ssec:second}  To deal with the base of the induction, we need to analyse the \emph{matching} of the sections of the  bundles $\cL_V$ and $\cL_Z$ on the double curve. For this we need a modification 
 of the basic degeneration, based on the concept of a \emph{$2$-throw}, described in \cite{CM11}, which we will briefly recall now.  
 In doing this we will often abuse notation, which we hope will create no problems for the reader.

Consider a degeneration of surfaces over a disc, with central fibre containing two components $X_1$ and $X_2$
 meeting transversally along a double curve $R$.
Let $E$ be  a $(-1)$-curve on $X_1$ that intersects  $R$ transversally at two points.
Blow it up in the threefold total space of the degeneration.
The exceptional divisor  $T\cong {\mathbb{F}}_{1}$
meets $X_1$ along $E$,  which is the negative section of  $T$.
The surface $X_2$ is blown up twice, with two exceptional divisors $G_1$ and $G_2$.

Now blow-up $E$ again, creating a double component $S \cong {\mathbb{P}}^1\times {\mathbb{P}}^1$ of the central fibre
that meets $X_1$ along $E$
and $T$ along its negative section. 
The blow-up affects $X_2$,
by creating two more exceptional divisors $F_1$ and $F_2$
which are $(-1)$ curves on $X_2$, while 
$G_1$ and $G_2$ become $(-2)$-curves.
Blowing $S$ down by the other ruling contracts $E$ on the surface $X_1$.
The curve $R$ becomes nodal, and $T$ changes into a ${\mathbb{P}}^2$.
The surface $X_2$ becomes non-normal, singular along the identified $(-1)$-curves
$F_1, F_2$.

On $X_2$  we introduced two pairs of \emph{infinitely near points}
corresponding to the $(-1)$-curves $F_i$ and $F_{i}+G_{i}$,  which  is also a curve with self-intersection $-1$, 
and we call  $F_i$ and $F_{i}+G_{i}$ a pair of \emph{infinitely near} $(-1)$-curves, with $1\le i\le 2$. 
We denote the assignment of multiplicities  to a pair of infinitely near points as above  by $[a,b]$,
indicating a multiple point $a$ and an infinitely near multiple point $b$,
namely $-a(F_i+G_i)-bF_i$.

\subsection{The base of the induction}\label{ssec:base} 

The above discussion is general. In order to deal with the base of the induction, 
we will now apply it to the degeneration 
$V\cup Z$ described in section \ref{ssec:1stdeg}, with $n=10$ (for the cases $11\le n\le 12$ the basic degeneration, plus some more care on the matching, suffices). The proofs here are quite similar
to the ones in \cite {CDMR11,CM11}, hence we will be brief.  

\subsubsection{The $n=10$ case} 
\begin{proposition} \label {prop:10}
The multiplicity vector $(5,4^9)$ is (SNS). In particular $B_{1,2}=(13; 5,4^9)$ generates a good ray.
\end{proposition}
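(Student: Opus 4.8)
The ``in particular'' is immediate once $(5,4^9)$ is known to be (SNS): the class $(13;5,4^9)$ lies on $\partial\cQ_{10}$ since $13=\sqrt{5^2+9\cdot 4^2}$, while $3\cdot 13=39<41=5+9\cdot 4$, so $v(13x;5x,(4x)^9)=-x<0$ for every $x$ and (SNS) forces each multiple $(13x;5x,(4x)^9)$ to be empty; hence $[13;5,4^9]$ is a rational non-effective ray on $\partial\cQ_{10}$, i.e.\ a good ray, as in the recipe of \S\ref{ssec:more}. So the plan is to prove the (SNS) statement, that $\cL_{d,x}:=(d;5x,(4x)^9)$ is non-special for all integers $d,x\ge 1$. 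By \eqref{eq:spec} this amounts to showing $h^0(\cL_{d,x})=0$ or $h^1(\cL_{d,x})=0$; since a direct computation gives $v(\cL_{d,x})<0$ for $1\le d\le 13x$ and $v(\cL_{d,x})>0$ for $d\ge 13x+1$, it is enough to prove (a) $\cL_{d,x}$ is empty when $d\le 13x$, and (b) $h^1(\cL_{d,x})=0$ when $d\ge 13x+1$.

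First I would dispatch two outer ranges of $d$ by elementary means. For $d\le 12x$, contracting $E_1$ identifies $h^0(\cL_{d,x})$ with the space of sections of $(d;(4x)^9)$ on $X_9$ vanishing to order $5x$ at the general point $x_1$; but $(d;(4x)^9)$ is empty on $X_9$ for $d<12x$ and equals $\{4x(-K_9)\}$ for $d=12x$, and the anticanonical cubic misses $x_1$, so $\cL_{d,x}=\emptyset$. For $d\ge 17x-1$, I would peel off $4x$ copies of the cubic $F=(3;0,1^9)$ through the nine points of multiplicity $4x$: since $F^2=0$ one has $(\cL_{d,x}-jF)\cdot F=3(d-12x)>0$ for all $j$, so $h^1\big((\cL_{d,x}-jF)|_F\big)=0$ on the genus-one curve $F$ and the restriction sequences give $h^1(\cL_{d,x})\le h^1(\cL_{d,x}-4xF)=h^1\big((d-12x;5x)\big)$ on $X_1$, which vanishes because $d-12x\ge 5x-1$. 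This leaves the window $12x<d<17x-1$, which straddles the threshold $d=13x$ and is where the degeneration techniques of \S\ref{ssec:1stdeg} and \S\ref{ssec:second} are needed.

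For $d$ in that window I would degenerate $X_n$ ($n=10$) as in \S\ref{ssec:1stdeg} with $s=9$, $t=2$, so that $V\cong X_9$ carries the multiplicities $(4x)^9$, $Z\cong X_2$ carries the single multiplicity $5x$, and a limit line bundle $\cL(a)$ restricts to $\cL_V=(a;(4x)^9)$ and $\cL_Z=(d;a,5x)$. One checks that $\cL_V$ is effective only for $a\ge 12x$ and $\cL_Z$ only for $a\le d$, and that in this window every such twist makes $\cL_Z$ special — the fibre of $Z$ through the point of multiplicity $5x$ sits in its base locus — so that $h^0$ of the limit line bundle $\cL(a)$ on the basic central fibre exceeds $v(\cL_{d,x})+1$ by a positive triangular-number amount for every admissible twist. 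That overshoot is exactly what a $2$-throw (\S\ref{ssec:second}) absorbs: I would perform one along the offending $(-1)$-curve, which meets the double curve in two points, obtaining a modified central fibre with an extra $\bbP^2$ component and two pairs of infinitely near base points on the $Z$-side (with multiplicities of type $[a,b]$) along which the sections now match exactly. On this modified degeneration: for $d\le 13x$ one checks that no twist makes the limit line bundle centrally effective, so $\cL_{d,x}$ is empty by the Basic Non-Effectivity Criterion in its $2$-throw version — giving (a); for $d\ge 13x+1$ one computes, via the matching along the (now nodal) double curve, that $h^0$ of the central fibre equals $v(\cL_{d,x})+1$, so $h^0(\cL_{d,x})\le v(\cL_{d,x})+1$ by semicontinuity — giving (b).

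The main obstacle is this matching computation after the $2$-throw: one has to determine precisely which sections on the $V$- and $Z$-components glue through the new components, and verify the equality ``$h^0$ of the central fibre $=v(\cL_{d,x})+1$'' (respectively, the failure of central effectivity) uniformly in $x\ge 1$ over the entire window. This is entirely parallel to, and no harder than, the analogous computations in \cite{CDMR11, CM11}; everything else — the degeneration set-up, semicontinuity, the two elementary ranges, and the deduction that $[13;5,4^9]$ is a good ray — is routine.
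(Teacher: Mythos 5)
Your deduction of the ``in particular'' from (SNS) is correct, and your two outer-range reductions (for $d\le 12x$ by contracting $E_1$, and for $d\ge 17x-1$ by peeling off $4x$ copies of $F=(3;0,1^9)$) are valid. But they are also superfluous: once $(13x;5x,(4x)^9)$ is shown empty, non-effectivity for all $d\le 13x$ follows by multiplying by linear forms, and once $h^1(13x+1;5x,(4x)^9)=0$ is shown, $h^1=0$ for all $d\ge 13x+1$ follows from the restriction sequence to a general line. The paper's proof uses exactly this standard reduction and therefore only needs to treat the two boundary degrees $d=13x$ and $d=13x+1$. Your plan instead commits you to a matching analysis over the whole window $12x<d<17x-1$, uniformly in $x$ --- far more than is necessary, and much harder to carry out.

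The genuine gap is in the degeneration step. You take $s=9$, $t=2$, so $Z\cong X_2$ carries the single multiplicity $5x$ and $\cL_Z=(d;a,5x)$. You correctly identify the offending curve as the line $\ell=(1;1,1)$ through the two points of $Z$, but you then assert it ``meets the double curve in two points''. It does not: the double curve $R$ on $Z$ is the exceptional divisor $E_{z_0}$ over the point carrying multiplicity $a$, and $\ell\cdot E_{z_0}=1$. The $2$-throw of \S\ref{ssec:second} requires the $(-1)$-curve to meet $R$ transversally at \emph{two} points, so it cannot be performed on $\ell$, and your ``absorb the speciality by a $2$-throw'' step breaks down. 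The paper's proof avoids exactly this obstruction by first passing to the (CK)-equivalent system $(15x;6x,(4x)^3,(5x)^5,4x)$ and degenerating with $s=4$, $t=7$; on the resulting $Z\cong X_7$ the curve to throw is $E=(3;2,1^6)$, whose multiplicity $2$ at $z_0$ gives $E\cdot R=2$, so the $2$-throw genuinely applies. This combination of a preliminary Cremona transformation with a tailored choice of $(s,t)$ is the crux of the paper's argument and is the piece missing from your proposal.
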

\begin{proof} It suffices to prove that, for every positive integer $x$, $(13x;5x,(4x)^9)$ 
is non-effective and $(13x+1;5x,(4x)^9)$ is non-special. 

We will show that $(15x;6x,(4x)^3,(5x)^5,4x)$  
((CK)-equivalent to $(13; 5x,(4x)^9)$)
is not effective. We  assume by contradiction that the linear system 
is effective for some $x$. 

Consider first the basic degeneration
with  $s=4$, $t=7$, endowed with the line bundles  $\cL(a)$ as in \S \ref {ssec:1stdeg}.
Then perform the $2$-throw of the $(-1)$-curve  $E=(3;2,1^6)$ on $Z$ (see \S \ref {ssec:second}). 
The normalization of $V$ is a $8$-fold blow up of $\bbP^2$, 
two of the exceptional divisors being identified in $V$.
More precisely, the normalization of  $V$ is the blow-up of the plane at 8 points: 4 of them are in general position,
4 lie on a line, and two of them are infinitely near. 
It is better to look at the surface $Z$ before blowing down $E$. Then $Z\cong X_7$. 
Finally, by executing the  $2$-throw we introduce a plane $T$.

We record that the pencil $P_V=(5; 3,2^3,[1,1]^2)$ on the normalization of $V$ 
and the pencil $P_Z=(3;2,1^5,0)$ on $Z$ are nef.

We abuse notation and still denote by  $\cL(a)$ the pullback  of this bundle to the total space
of the family obtained by the double blow-up of $E$  (see \S \ref {ssec:second}). 
For each triple of
integers $(a,b_1,b_2)$, we can consider the bundle
$\cL(a,b_1,b_2)=\cL(a)\otimes\cO_Y(-b_1T-(b_1+b_2)S)$. We will still denote by
$\cL(a,b_1,b_2)$ the pushout of this bundle to the total space of the $2$-throw family. 
Every limit line bundle of $(15x;6x,(5x)^5,(4x)^4)$
has the form $\cL(a,b_1,b_2)$.

We are interested in those $\cL(a,b_1,b_2)$ which are
centrally effective.  The computations of \cite{CDMR11}, 
specialized to the present case, show that 
$b_1\ge -8x+a$, $b_1+b_2\ge -16x+2a$, and  for every
$b_1, b_2$ satisfying these inequalities,
the restriction of  $\cL(a,b_1,b_2)$ to $Z$ and $V$ are
subsystems of 
\[ {\mathcal{L}}_{Z}=
(63\,x-6\,a; 32\,x-3\,a,(21\,x-2\,a)^5,20\,x-2\,a), \; 
{\mathcal{L}}_{V}=(a; 6x,(4x)^3, [-8\,x+a,-8\,x+a]^2).\]
It suffices to see that there is no value of $a$ which makes both $\cL_Z$ 
and $\cL_V$ effective, and for which there are divisors in these two systems which agree on the double curve.

For $\cL(a,b_1,b_2)$
to be centrally effective one needs
\[ \mathcal{L}_Z\cdot P_Z=20x-2a\ge 0, \;\;   
\mathcal{L}_V\cdot P_V=a-10x\ge 0.\]
This forces $a=10x$ and the restriction of  $\cL(a,b_1,b_2)$
to $Z$ and $V$ are  \emph{equal} to
\[ \cL_Z= (3x; 2x,x^5,0), \;\;
\cL_V=(10x;6x,(4x)^3,[2x,2x]^2),\]
which means $b_1=b_2=-8x+a=2x$, hence the restriction
of the line bundle to $T$ is trivial, and the systems $\cL_Z$ and $\cL_V$ are composed with the
pencils $P_V$ and $P_Z$, thus $\dim(\cL_Z)=x$, $\dim(\cL_V)=2x$.

 Focusing on $\cL_V$, we only need to consider the subspace of sections that match along 
$F_1$ and $F_2$. 
Since the identification $F_1=F_2$ is done via a sufficiently general
projectivity,  this vector space has dimension 1
(see \cite[\S 8] {CM11} for details). 
Then, by transversality on $Z \cap V=R$ (see \cite [\S 3] {CM98a}), and since 
$\cL_{Z\vert R}$ has dimension $x$ and degree $2x$, no section
on $Z$ matches the  one on $V$ to create a section on $X_0$.

Now consider $(13x+1;5x,(4x)^9)$ and its (CK)-equivalent system $(15x+3; 6x+2,(4x)^3,5x,(5x+1)^4,4x)$.
A similar analysis as before,
using $a=10x+1$, leads 
to the following limit
systems on $Z$ and $V$ (trivial on $T$)
 \[ \cL_Z=(3x+12; 2x+7,(x+4)^4,x+3,3), \;\; 
\cL_V=(10x+1; 6x+2,(4x)^3,[2x-1,2x-2]^2).\]

The system $\cL_Z$ is (CK)-equivalent to $(x+5; x+2,2,1^4)$ 
so it is nef, non-empty of the expected 
dimension (see \cite{Har85}). The system $\cL_V$ is also non-empty of the expected dimension:
it consists of  three lines 
plus a residual system (CK)-equivalent to the nef
system $(2x+3; 3,2,1^2,2x-2)$. Thus to compute the
dimension of the limit system as in \cite{CM98a}, it remains
to analyse the restrictions to $R$ (or rather, the \emph{kernel
systems} $\hat \cL_Z$, $\hat \cL_V$ of such restrictions). 
Since both surfaces are  anticanonical, this can be
done quite easily, showing that  they are non-special with $\dim ( \hat \cL_Z)=2x+4$ and
$\dim ( \hat \cL_V)=10x-7$. Thus 
 \cite [3.4, (b)] {CM98a}, applies and non-speciality of 
 $(15x+3; 6x+2,(4x)^3,5x,(5x+1)^4,4x)$ follows.
\end{proof}

\subsubsection{The $n=11$ case} 
\label{sec:11}
\begin{proposition} \label {prop:11}
The multiplicity vector $(3,2^{10})$ is  (SNS). In particular $(7;3,2^{10})$ generates a good ray.
\end{proposition}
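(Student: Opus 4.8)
The plan is to imitate the proof of Proposition \ref{prop:10}. Since $\delta=\sqrt{3^{2}+10\cdot 2^{2}}=\sqrt{49}=7$ is an integer and $3\delta=21<23=3+10\cdot 2$, by the criterion recalled in \S\ref{ssec:more} it suffices to prove that $(3,2^{10})$ is (SNS), i.e.\ that $(d;3x,(2x)^{10})$ is non-special for every pair of positive integers $d,x$. Here $v(d;3x,(2x)^{10})=\tfrac12(d^{2}+3d-49x^{2}-23x)$ is strictly increasing in $d$, with $v(7x;3x,(2x)^{10})=-x<0$ and $v(7x+1;3x,(2x)^{10})=6x+2>0$. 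Multiplying by a general line embeds $(d;3x,(2x)^{10})$ into $(d+1;3x,(2x)^{10})$, and restricting to a general line exhibits $H^{1}(d+1;3x,(2x)^{10})$ as a quotient of $H^{1}(d;3x,(2x)^{10})$. Hence (SNS) follows once we prove, for every $x\ge 1$:
\begin{itemize}
\item[(I)] $(7x;3x,(2x)^{10})$ is empty (which then forces emptiness, hence non-speciality, for all $d\le 7x$, since $v<0$ there);
\item[(II)] $(7x+1;3x,(2x)^{10})$ is non-special (being then of dimension $6x+2>0$ it has $h^{1}=0$, and the line-restriction sequence propagates $h^{1}=0$, hence non-speciality, to all $d\ge 7x+1$).
\end{itemize}

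For both statements I would use the basic degeneration of \S\ref{ssec:1stdeg} with $n=11=s+(t-1)$, $s=4$, $t=8$: place four of the points of multiplicity $2x$ on $V\cong X_{4}$ and the point of multiplicity $3x$ together with the remaining six points of multiplicity $2x$ on $Z\cong X_{8}$, so that, with twisting integer $a$, the central restrictions are $\cL_{V}=(a;(2x)^{4})$ on $X_{4}$ and $\cL_{Z}=(7x+\epsilon;a,3x,(2x)^{6})$ on $X_{8}$ ($\epsilon\in\{0,1\}$), and the double curve $R=V\cap Z\cong\mathbb{P}^{1}$ is a line on $V$ and a $(-1)$-curve on $Z$. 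The relevant feature is that $P_{V}=(2;1^{4})$ is a base-point-free pencil on $X_{4}$ and $P_{Z}=(7;4,3,2^{6})$ is a base-point-free pencil on $X_{8}$, both nef with self-intersection $0$; and that exactly at the twist $a=4x$ one has $\cL_{V}=2x\,P_{V}$, and (for $\epsilon=0$) also $\cL_{Z}=x\,P_{Z}$, with $\cL_{V}|_{R}=\cL_{Z}|_{R}=\cO_{R}(4x)$.

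For (I) take $a=4x$. Then $\cL_{V}=2xP_{V}$ and $\cL_{Z}=xP_{Z}$ are composed with the nef pencils $P_{V},P_{Z}$, hence not $(-1)$-special, hence non-special by (SHGH) for $n\le 9$, of dimensions $2x$ and $x$. Restriction to $R$ sends $H^{0}(\cL_{V})$ onto $\mathrm{Sym}^{2x}$ of the restricted pencil $P_{V}|_{R}\subset H^{0}(\cO_{R}(2))$, a subspace of $H^{0}(\cO_{R}(4x))$ of dimension $2x+1$, and $H^{0}(\cL_{Z})$ onto $\mathrm{Sym}^{x}$ of $P_{Z}|_{R}\subset H^{0}(\cO_{R}(4))$, of dimension $x+1$; their dimensions sum to $3x+2\le 4x+1=h^{0}(\cO_{R}(4x))$, and by the transversality coming from the genericity of the base points (cf.\ \cite[\S3]{CM98a}) these two images meet only in $0$. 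Since moreover $\cL_{V}(-R)=(4x-1;(2x)^{4})$ is empty (on $X_{4}$, $(a';(2x)^{4})$ is effective only for $a'\ge 4x$) and $\cL_{Z}(-R)=(7x;4x+1,3x,(2x)^{6})$ is empty (it has negative intersection with the nef class $P_{Z}$), the restriction sequence $0\to\cL(4x)\to\cL_{V}\oplus\cL_{Z}\to\cL|_{R}\to 0$ on $Y_{0}=V\cup_{R}Z$ yields $h^{0}(Y_{0},\cL(4x))=0$, whence $(7x;3x,(2x)^{10})$ is non-effective by upper semicontinuity. For (II) take $a=4x$ again; now $\cL_{V}=2xP_{V}$ as above ($h^{0}=2x+1$) while $\cL_{Z}=(7x+1;4x,3x,(2x)^{6})$ on $X_{8}$. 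Checking that $\cL_{Z}$ and $\cL_{Z}(-R)=(7x+1;4x+1,3x,(2x)^{6})$ are not $(-1)$-special, (SHGH) for $n\le 9$ makes them non-special of dimensions $8x+2$ and $4x+1$; hence the image of $H^{0}(\cL_{Z})$ in $H^{0}(\cO_{R}(4x))$ has dimension $(8x+3)-(4x+2)=4x+1$, i.e.\ is everything, and the same exact sequence gives $h^{0}(Y_{0},\cL(4x))=(2x+1)+(8x+3)-(4x+1)=6x+3=v(7x+1;3x,(2x)^{10})+1$. By semicontinuity $h^{0}(7x+1;3x,(2x)^{10})\le 6x+3$, while Riemann--Roch gives the reverse inequality, so the system is non-special.

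The hard part is the transversality statement on the double curve used in (I): that $\mathrm{Sym}^{2x}(P_{V}|_{R})$ and $\mathrm{Sym}^{x}(P_{Z}|_{R})$ occupy general position inside $H^{0}(\cO_{R}(4x))$, so that no section on $Z$ glues with one on $V$. This is of the type established, for generic point configurations, in \cite{CM98a,CM11,CDMR11} (``transversality on the double curve''), and is where the real work is concentrated; the non-speciality checks on $X_{8}$ in (II) are routine consequences of (SHGH) for $n\le 9$ once one verifies that the systems in question are not $(-1)$-special.
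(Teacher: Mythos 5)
Your proof is correct and follows essentially the same route as the paper's: the same basic degeneration ($s=4$, $t=8$), the same twist $a=4x$, the same observation that $\cL_V=2xP_V$ and $\cL_Z=xP_Z$ for the nef pencils $P_V=(2;1^4)$ and $P_Z=(7;4,3,2^6)$, the same transversality-on-$R$ dimension count ($3x+2\le 4x+1$), and the same two cases $\delta=0$ (emptiness via kernel systems meeting $P_V$, $P_Z$ negatively) and $\delta=1$ (non-speciality via surjectivity of restriction from $Z$ to $R$). The only difference is cosmetic: you make explicit the standard reduction to degrees $7x$ and $7x+1$ and the dimension bookkeeping on $Y_0$, which the paper leaves implicit.
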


\begin{proof}
We prove that $(7x+\delta; (2x)^{10},3x)$
is non-effective for all $x$ and $\delta=0$ and non-special for  $\delta=1$.

Consider the basic degeneration as in \S \ref {ssec:1stdeg}, with $s=4$, $t=8$. 
Then $\cL(a)$ restrict as
\[ \cL_V=(a; (2x)^4), \;\;
\cL_Z=(7x+\delta; a,(2x)^6,3x).\] 

Look at  the case $\delta=0$, where we want to prove non-effectivity. (BNC) does
not suffice for this, so we will compute the dimension of a limit system as in \cite{CM98a}. 
To do this,  pick $a=4x$.
The systems $\cL_V$ and $\cL_Z$ are composed with the pencils
$P_V=(2;1^4)$ and $P_Z=(7;4,(2)^6,3)$ respectively (note that
$(7;4,(2)^6,3)$ is (CK)-equivalent to a pencil of lines), and 
$\dim (\cL_V)=2x, \dim (\cL_Z)=x$. 
The restriction to $R$ has degree $4x\ge 2x+x+1$, so by transversality 
of the restricted systems \cite[\S 3] {CM98a}, 
the limit linear system consists of  the kernel systems
$\hat \cL_V=(4x-1; (2x)^4)$ and $\hat \cL_Z=(7x;4x+1,(2x)^6,3x)$.
These are non-effective, because they meet negatively $P_V$ and $P_Z$
respectively. So $(7x;(2x)^{10},3x)$ is non-effective.

For $\delta=1$ pick again $a=4x$. Then $\cL_V$ is the same,
$\cL_Z=(7x+1;4x,(2x)^6,3x)$ and the kernel systems  are both 
nef,  hence they are non-special by 
\cite{Har85}.  Moreover the restriction of 
$\cL_Z$ to $R$ is the complete series of degree $4x$.
Again by transversality as in \cite[\S 3] {CM98a}, the claim follows.
\end{proof}

\subsubsection{The $n=12$ case} 
\label{sec:12}
\begin{proposition} \label {prop:12}
The multiplicity vector $(2^8,1^{4})$ is (SNS). In particular $(6;2^8,1^{4})$ generates a good ray.
\end{proposition}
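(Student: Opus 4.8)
The plan is to imitate the strategy already used for $n=10$ and $n=11$: reduce the SNS claim to showing that $(6x;(2x)^8,(x)^4)$ is non-effective and $(6x+1;(2x)^8,(x)^4)$ is non-special for every positive integer $x$, and then prove each of these via the basic degeneration of \S\ref{ssec:1stdeg}. One could instead deduce this proposition directly from Corollary \ref{cor:main} and Remark \ref{rem:main}: indeed $(2^8,1^4)$ should arise from a smaller good ray by the operation $C\mapsto C_\delta$. Observe that $(6;2^8,1^4)$ has $\delta=\sqrt{4\cdot 8+4}=6$ and $-K_{12}\cdot(6;2^8,1^4)=18-16-4=-2<0$, so the hypothesis $3d<\sum m_i$ of the recipe at the end of \S\ref{ssec:more} holds once we know SNS. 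The cleanest route is via Example \ref{ex:main} or Remark \ref{rem:main}: take the good ray $(3;1^9)$ on $X_9$ (Nagata, $d=3$) — no, that has the wrong shape — so instead I would take $(6;2^8,1^4)$ itself and realize it by splitting off a group of points. Concretely, apply Proposition \ref{prop:main}/Corollary \ref{cor:main} with $s=4$, $t=9$, $n=12$: start from a good ray $C=(6;m_1,1^8)$ on $X_9$ with $m_1$ chosen so that after the substitution $m_1\mapsto (m_1/2)^4$ one lands on $(6;2^8,1^4)$, i.e. $m_1=8$ would need revisiting; the bookkeeping here is the first thing to get right.

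If the reduction-to-a-smaller-ray approach does not package cleanly, I would argue directly as in Propositions \ref{prop:10} and \ref{prop:11}. Set up the basic degeneration with $s=4$ general points on $V\cong X_4$ and $t-1=7$ points on $Z\cong X_8$ (so $n=12$, $t=9$), endowed with the line bundles $\cL(a)$ of \S\ref{ssec:1stdeg}, whose restrictions are $\cL_V=(a;(2x)^4)$ and $\cL_Z=(6x;a,(2x)^4,(x)^4)$ (distributing the eight double points and four simple points between the two sides in the combination that makes the numerics work). For the non-effectivity statement ($\delta=0$), I expect (BNC) alone not to suffice — as in the $n=11$ case — so I would instead choose the twisting integer $a$ optimally (the natural guess is $a=4x$, forced by pairing $\cL_V$ against the pencil $P_V=(2;1^4)$ and $\cL_Z$ against an analogous pencil $P_Z$ on $Z$), check that both restricted systems are composed with these pencils, compute $\dim(\cL_V)$ and $\dim(\cL_Z)$, verify that the restriction to the double curve $R$ has degree $\ge \dim(\cL_V)+\dim(\cL_Z)+1$, invoke transversality (\cite[\S3]{CM98a}) to pass to the kernel systems, and finally observe that each kernel system meets its pencil negatively, hence is empty. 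That forces the limit system, and therefore $(6x;(2x)^8,(x)^4)$, to be non-effective.

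For the non-speciality statement ($\delta=1$), I would keep $a=4x$ (or adjust it by a bounded amount, as was done with $a=10x+1$ in the $n=10$ argument), so that $\cL_V$ is unchanged and $\cL_Z=(6x+1;4x,(2x)^4,(x)^4)$. The point is to exhibit, after at most a bounded (CK)-reduction, that both kernel systems $\hat\cL_V$ and $\hat\cL_Z$ are nef — hence non-special by \cite{Har85} — and that the restriction of $\cL_Z$ to $R$ is the complete linear series of its degree; transversality as in \cite[\S3]{CM98a} (or the matching count of \cite[3.4(b)]{CM98a} together with the anticanonical structure of $V$ and $Z$) then yields non-speciality of $(6x+1;(2x)^8,(x)^4)$, and with it of every (CK)-equivalent system, completing the proof that $(2^8,1^4)$ is SNS. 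The "in particular" follows since $d=6=\sqrt{\sum m_i^2}$ is an integer with $3d=18<20=\sum m_i$, so the recipe at the end of \S\ref{ssec:more} applies and $[6;2^8,1^4]$ is a good ray.

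\textbf{Main obstacle.} As in the $n=10$ and $n=11$ cases, the delicate point is not the (BNC) bound but the \emph{matching} analysis on the double curve $R$: getting the twisting integer $a$ exactly right so that both restrictions become composed with (or at least controlled by) explicit pencils, correctly identifying the kernel systems, and verifying the transversality/dimension count that glues sections on $V$ and $Z$. I expect the non-effectivity half ($\delta=0$) to be where the degeneration has to be pushed hardest, since — unlike $n=11$ — here there are four simple base points interacting with the eight double points, and one must check that no choice of $a$ simultaneously makes $\cL_V$ and $\cL_Z$ effective with compatible restrictions; a $2$-throw (as in \S\ref{ssec:second}) may be needed if the plain basic degeneration leaves a residual matching to rule out.
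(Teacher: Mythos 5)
Your general plan --- reduce to non-effectivity of $(6x;(2x)^8,x^4)$ and non-speciality of $(6x+1;(2x)^8,x^4)$, then attack both with the basic degeneration for $s=4$, $t=9$ --- matches the paper's strategy. But the concrete choices you propose differ from the paper's in a way that matters, and you haven't carried either route to completion.

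First, the ``clean'' route you float at the start (produce $(6;2^8,1^4)$ by applying Corollary~\ref{cor:main} to a smaller good ray) does not work here, and you should have been able to rule it out rather than leave it as ``the bookkeeping is the first thing to get right.'' The obvious ancestor is $[3;1^9]$ on $X_9$, but that ray is \emph{effective} (the cubic through nine general points), hence not good; more generally Example~\ref{ex:main} only covers $[dh;h^{d^2-\ell},1^{\ell h^2}]$ for $d\ge 4$, and $(6;2^8,1^4)$ is exactly the excluded boundary case $d=3,h=2,\ell=1$. Indeed Remark~\ref{rem:prob}(i) records that Proposition~\ref{prop:12} is precisely what extends the family to $d=3$. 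So a direct degeneration argument is unavoidable.

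Second, for the degeneration itself you distribute the eight double points with $(2x)^4$ on $V$ and $(2x)^4,(x)^4$ on $Z$, and guess $a=4x$. The paper instead puts the four \emph{simple} points on $V$ and all eight double points on $Z$, and twists with $a=2x$, yielding $\cL_V=(2x;x^4)$ and $\cL_Z=(6x;(2x)^9)$. The latter choice is what makes the matching analysis essentially trivial: $\cL_Z$ is rigid (it is $2x$ copies of the unique cubic through the nine points), so its restriction to the double curve $R$ is a single fixed divisor, while $\cL_{V|R}$ is composed with a general degree-$2$ pencil; transversality immediately says no section matches, and one passes to kernel systems, each of which meets a moving nef pencil negatively. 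Your distribution gives $\dim\cL_V=2x$ and $\dim\cL_Z=x$ and forces a more delicate dimension count on $R$; it is plausible that it can be pushed through, but you neither verify the inequality $\deg(\cL_{Z|R})\ge\dim\cL_V+\dim\cL_Z+1$ in your normalization nor check non-effectivity of the resulting kernel systems, and you never pin down the $\delta=1$ case. Finally, your speculation that a $2$-throw ``may be needed'' is unfounded: for $n=12$ (as for $n=11$) the plain basic degeneration suffices; the $2$-throw is required only for $n=10$.

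In short: right framework, but the specific degeneration data you chose are not the paper's (and are messier), the reduction you float first provably fails, and none of the crucial matching/kernel computations are actually done.
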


\begin{proof}
We prove that $(6x+\delta;x^{4},(2x)^8)$
is empty for all $x$ and $\delta=0$ and non-special for $\delta=1$.

Consider the degeneration of  \S \ref {ssec:1stdeg},
with $s=4$, $t=9$.
Then $\cL(a)$ restrict as  
\[ \cL_V= (a; (x)^4), \;\; \cL_Z=(6x; a,(2x)^8).\]

Let us analyze the case $\delta=0$ for $a=2x$. The system
$\cL_Z$ consists of $2x$ times 
the unique cubic $E$ through the 9 points and
$\cL_V$ is composed with the pencil $P_V=(2,1^4)$,
and its restriction to $R$ is composed with a general pencil of
degree 2. By transversality 
it does not match the divisor cut out by $2xE$ on 
$R$ and the limit system is formed by the
kernel systems. An elementary computation shows that they are not effective.

For $\delta=1$ and $a=2x$, $\cL_V$ is the same, whereas
$\cL_Z=(6x+1; (2x)^9)$ and the kernel 
$\hat \cL_Z=(6x+1;2x+1,(2x)^8)$ are both 
nef, hence they are non-special by 
\cite{Har85}. As before, the restriction of 
$\cL_Z$ to $R$ is the complete series of degree $2x$,
and transversality gives the claim.
\end{proof}

\begin{remark}\label{rem:prob} (i) In Example \ref {ex:main} we saw that  $[dh; h^{d^2-\ell}, 1^{\ell h^2}]$, with $0\le \ell\le h$, and $d\ge 4$, $h\ge 1$, is  a good ray. Proposition \ref  {prop:12} shows that if  $d=3$, $h=2$, $\ell=1$, the ray is still good. By Corollary \ref {cor:main}, this implies that 
 if  $d=3$, $h=2$, $1\le \ell\le 9$, the ray is still good.  With a similar argument, one sees that all cases $d=3$, $h\ge 1$, also give rise to good rays. We leave this to the reader.

(ii) For any integer $d\ge 6$, take positive integers $r, s$ such that $d^2=4s+r$. The ray generated by $(d;2^ s,1^ r)$ on $X_n$ is nef. Indeed, one has $(d-1)(d-2)\ge 2s$, so there exists an irreducible curve $C$ of degree $d$ with exactly $s$ nodes $p_1, \dots,p_s$ (\cite{Sev68}, Anhang F). On the blow--up of the $s$ nodes and $r$ other points $q_1,\ldots, q_r$ of the curve, the proper transform of the curve  is a prime divisor of selfintersection zero, thus nef.

If $d=2k$ is even then $r=4k^2-4s$ is a multiple of
four and $(d;2^s,1^r)=(2k;2^{k^2-\ell},1^{4\ell})$ generates a good ray
with $\ell=k^2-s$ by example \ref {ex:main} (because of (i) we may assume $d>6$). This suggests that that the ray $[d;2^ s,1^ r]$ may always be good. To prove it,  taking into account  Corollary \ref{cor:main}, it would suffice to show that $[2k+1; 2^{k^2+k},1]$ is good for all $k\ge 3$. 
\end{remark}

\subsection{The proof of the ET}\label {ss:proof} For $10\le n\le 12$ the problem is settled by Propositions \ref {prop:10}, \ref {prop:11} and
\ref {prop:12}.  To cover all $n\ge 13$,  we apply Corollary \ref {cor:main} with $s=4$, $D=(2;1^4)$ and $\nu_1=\ldots=\nu_4=\frac {m_1}4$ (see Remark \ref {rem:main}). For instance one finds the good rays

  \[
 \begin{aligned}
& [13\cdot 2^h; 5^4, (5\cdot 2)^3, \ldots, (5\cdot 2^{h-1})^3, (2^{h+2})^9]&\text {if }\; n=10+3h,&\; \text {for }  h\ge 1,\cr
 & [7\cdot 2^h; 3^4, (3\cdot 2)^3, \ldots, (3\cdot 2^{h-1})^3, (2^{h+1})^{10}]& \text {if }\; n=11+3h,&\; \text {for }  h\ge 1,\cr
   &[6\cdot 2^{h-1}; (2)^3, \ldots, (2^{h-1})^3, (2^{h})^7, 1^8]&\text {if }\; n=12+3h,&\; \text {for } h\ge 1.\cr
  \end{aligned}
  \]

\renewcommand{\thesection}{\arabic{section}}
\renewcommand{\theequation}{\thesection.\arabic{equation}}
\setcounter{equation}{0}

\renewcommand{\thetheorem}{\thesection.\arabic{theorem}}
\setcounter{theorem}{0}

\section{An application}\label{sec:appl}

\begin{proposition} \label{prop:appl} If (SN) holds for $n=10$ and (N) holds for all $n\le 89$ then (N) holds for all $n\ge 90$.
\end{proposition}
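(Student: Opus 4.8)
The plan is to induct on $n\ge 90$ and, recalling that (N) for $n$ amounts to nefness of the Nagata ray $\nu_n=[\sqrt n,1^n]$, argue by contradiction. If $\nu_n$ is not nef there is a prime divisor $C=(d;m_1,\dots,m_n)$ with $\sqrt n\,d<m_1+\cdots+m_n$ (it suffices to test (N) on prime divisors, as noted after Proposition~\ref{prop:nagata}); applying finitely many quadratic Cremona transformations centred at the three points of largest multiplicity — each of which, by a direct computation, strictly lowers $\nu_n\cdot C$ whenever it lowers the degree — one may assume $C$ is standard, $m_1\ge\cdots\ge m_n\ge 0$, and by the Cauchy--Schwarz estimate in the proof of Proposition~\ref{prop:impl}(iii) necessarily $C^2<0$.

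Next, degenerate $X_n$ by the basic degeneration of \S\ref{ssec:1stdeg} with $s=81$ and $t=n-80$: this is where the threshold $90=81+10-1$ comes from, since $n\ge 90$ is exactly the condition $t\ge 10$. The central fibre is $V\cup Z$ with $V\cong X_{81}$ receiving $81$ suitably chosen base points and $Z\cong X_{n-80}$ receiving the remaining $n-81$ base points together with the exceptional point of the blow--up. Effectivity of $C$ provides a twisting integer $a$ for which $\cL_V=(a;\dots)$ is effective on $X_{81}$ and $\cL_Z=(d;a,\dots)$ is effective on $X_{n-80}$. Now Proposition~\ref{prop:nagata}, i.e.\ (N) for the square $81$, bounds $a$ from below through $\cL_V$, while (N) for $n-80$ — available from the hypothesis when $n\le 169$ and from the inductive hypothesis when $n\ge 170$, since $10\le n-80<n$ — bounds $a$ from above through $\cL_Z$. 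For the base case $n=90$ the component $Z$ is $X_{10}$ itself, and here one uses (SN) for $n=10$ via Proposition~\ref{prop:SN10}: on $\partial\cQ_{10}^{\succcurlyeq}$ it guarantees non-effectivity except for systems (CK)-equivalent to $(3;1^9,0)$. More generally (SN) for $n=10$ re-enters each time the Cremona reduction of $\cL_V$ or $\cL_Z$ places a subsystem on $\partial\cQ_{10}^{\succcurlyeq}$, where either its non-effectivity contradicts central effectivity or it is the plane cubic, a case excluded by comparing multiplicities with the standardness of $C$; when a restricted system has negative self-intersection one first peels off the $(-1)$-curve produced by \cite{dF1} and descends to one fewer point. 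Assembling the resulting one-sided bounds on $a$ with $\sqrt n\,d<m_1+\cdots+m_n$ yields the contradiction.

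The crux — and the reason (SN) for $n=10$ rather than merely (N) for $n=10$ is needed — is exactly this assembly: the inequalities coming from $\cL_V$ and $\cL_Z$ bound $a$ only on one side each, and the only a priori bound on the multiplicities, $m_1\le d$, falls short of closing the numerical gap, of order $\sqrt n-\sqrt{n-80}$, that separates these bounds from a contradiction. One must therefore arrange the reduction so that some restriction of $C$ is forced onto the cone $\partial\cQ_{10}^{\succcurlyeq}$ and then invoke the strong non-effectivity that (S$\Delta$C) supplies there, using it to pin $a$ — and the multiplicities appearing in $\cL_V$ — into the narrow range in which the estimate succeeds. Verifying that neither the exceptional cubic case nor the repeated $(-1)$-curve peeling ever reconstructs a genuine standard counterexample on $X_n$ is the technical heart of the argument.
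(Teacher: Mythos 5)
Your proposal takes a genuinely different route from the paper, but it has a gap that I do not think can be repaired as sketched.

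The paper's proof rests on Lemma~\ref{lem:appl}: write $n=s_1+\cdots+s_{10}$ with each $\nu_{s_i}$ nef and $3\sqrt n\le\sum_i\sqrt{s_i}$; then the class $(\sqrt n;\sqrt{s_1},\dots,\sqrt{s_{10}})$ lies on $\partial\cQ_{10}^\succcurlyeq$, where (SN) for $n=10$ (via Proposition~\ref{prop:SN10}) gives a dense supply of good rays to approximate it. Applying Corollary~\ref{cor:main} ten times expands each slot $\sqrt{s_i}\mapsto 1^{s_i}$, producing nef rays in $N_1(X_n)$ whose limit is $\nu_n$. The specific partition $n=9h+k$, $9\le k\le 17$, $h\ge 9$ is then checked to satisfy the inequality for $n\ge 90$. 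So (SN)(10) enters in a very precise way: it is exactly what makes good rays dense on $\partial\cQ_{10}^\succcurlyeq$, the cone in $N_1(X_{10})$ where the ten-slot ray lives.

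Your argument uses a single basic degeneration with $s=81$, $t=n-80$, and tries to trap the twisting integer $a$ between the two bounds coming from (N)(81) and (N)($n-80$). But these bounds simply do not close. Writing $M_1$ for the sum of the $81$ multiplicities placed on $V$ and $M_2$ for the sum of those on $Z$, central effectivity plus (N)(81) gives $a\ge M_1/9$, and (N)($n-80$) gives $a<\sqrt{n-80}\,d-M_2$; combining, $M_1/9+M_2<\sqrt{n-80}\,d$. This is fully compatible with the supposed violation $M_1+M_2>\sqrt n\,d$: for instance $M_2=0$ and $M_1\in\bigl(\sqrt n\,d,\ 9\sqrt{n-80}\,d\bigr)$ works precisely when $n>81$, i.e., in the whole range of interest. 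So no contradiction arises, and the step you defer as the ``technical heart'' (forcing some restriction onto $\partial\cQ_{10}^\succcurlyeq$ via Cremona reduction and peeling $(-1)$-curves, then invoking (S$\Delta$C)) is not a side verification but the entire missing argument --- and there is no mechanism in a two-component split into $X_{81}$ and $X_{n-80}$ that produces a subsystem on $X_{10}$. The paper's ten-slot decomposition is what makes $X_{10}$, and hence (SN)(10), enter in a controlled way; that is the idea your proposal is missing.
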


The proof is based on the following:

\begin{lemma}\label{lem:appl}  Assume (SN) holds for $n=10$.
Let $n=s_1+\cdots +s_{10}$, where $s_1,\ldots, s_{10}$ are positive integers such that the Nagata ray $\nu_{s_i}$ is nef for $1\le i\le 10$ and
\begin{equation}\label{eq:appl}  
3\sqrt n\le \sum_{i=1}^{10} \sqrt {s_i}.
\end{equation}
Then $\nu_n$ is nef.
\end{lemma}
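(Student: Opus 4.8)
\textbf{Proof proposal for Lemma \ref{lem:appl}.}

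The plan is to realize $\nu_n$ as a non-effective (hence, by Lemma \ref{lem:nef}, nef) ray in $\partial\cQ_n^\succcurlyeq$ by iterating the degeneration of \S\ref{ssec:1stdeg} ten times, peeling off one group of $s_i$ points at each stage, and using the hypothesis that (SN) holds for $n=10$ to start the induction from a good ray on $X_{10}$. Concretely, I would first produce a good ray on $X_{10}$: by Proposition \ref{prop:SN10}, (SN) for $n=10$ is equivalent to (S$\Delta$C) for $n=10$, and in particular (since $\nu_{10}$ lies on $\partial\cQ_{10}^\succcurlyeq$ and is not generated by a curve (CK)-equivalent to $(3;1^9,0)$) it gives that the Nagata ray $\nu_{10}=[\sqrt{10};1^{10}]$ is itself non-effective, i.e. a good ray on $X_{10}$. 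This is the base object $C=(\sqrt{10};1^{10})$, or rather a rational multiple of it with integer entries.

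The core of the argument is then a repeated application of Proposition \ref{prop:main} in the normalization suggested by Remark \ref{rem:main}: take $D=(\sqrt{s_i};1^{s_i})\in\Nef(X_{s_i})$ (nef exactly because $\nu_{s_i}$ is assumed nef), take all $\mu_j=1$ and all $\nu_j=m/s_i$ where $m$ is the relevant multiplicity to be split, so that $\sum \nu_j^2 = m^2/s_i$; for this to fit hypothesis (iii) of Corollary \ref{cor:main} one wants $\sum\nu_j^2\ge 1$, i.e. $m\ge\sqrt{s_i}$, which will be arranged by first scaling. Starting from $C=(\delta_0;m^{10})$ with $\delta_0=m\sqrt{10}$ on $X_{10}$ (a rational multiple of $\nu_{10}$, good), I would replace the first group of $m$'s by $s_1$ entries equal to $m/\sqrt{s_1}$, getting by Proposition \ref{prop:main} and Corollary \ref{cor:main} a nef class on $X_{s_1+9}$ of the form $(\delta_0; (m/\sqrt{s_1})^{s_1}, m^9)$ lying on $\partial\cQ$; then replace the second group of $m$'s, and so on, ten times, ending with the class
\[
C^{(10)} = \Bigl(\delta_0;\ \bigl(\tfrac{m}{\sqrt{s_1}}\bigr)^{s_1},\ldots,\bigl(\tfrac{m}{\sqrt{s_{10}}}\bigr)^{s_{10}}\Bigr)
\]
on $X_n$, which Corollary \ref{cor:main} guarantees is nef and on $\partial\cQ_n^\succcurlyeq$. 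Each entry of $C^{(10)}$ equals $m/\sqrt{s_i}$ in the $i$-th block; scaling the whole construction by $\sqrt{s_1\cdots s_{10}}/m$ (or, more carefully, choosing $m$ divisible by enough to clear denominators at each step — exactly the role of the ``$m_1$ an integer'' hypothesis in Proposition \ref{prop:main}) turns $C^{(10)}$ into a multiple of the Nagata class: indeed, after rescaling so that all multiplicities equal $1$, the degree becomes $\delta_0\cdot\sqrt{s_1\cdots s_{10}}/(m\sqrt{s_i})$-adjusted, and one checks that the resulting class is $(\delta;1^n)$ with $\delta=\sqrt n$ precisely because $C^{(10)}$ has self-intersection $0$ (so $\delta^2=\sum\text{mult}^2$, forcing $\delta=\sqrt n$ once all multiplicities are $1$). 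The inequality \eqref{eq:appl}, $3\sqrt n\le\sum_i\sqrt{s_i}$, is exactly what is needed to keep $C^{(k)}\cdot K$ nonnegative through the iteration (equivalently, to stay on the $\succcurlyeq$ side): tracking $C\cdot K$ via the formula in the proof of Corollary \ref{cor:main}, the final class has $C^{(10)}\cdot K_n = C\cdot K_{10} + \sum_i(\text{correction})$, and unwinding gives $-C^{(10)}\cdot K_n = \bigl(\sum_i\sqrt{s_i}\bigr)\cdot(\text{scale}) - 3\cdot\text{degree}\cdot(\text{scale})$, nonnegative iff \eqref{eq:appl} holds; this is where the hypothesis is consumed.

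I expect the main obstacle to be bookkeeping rather than conceptual: making the ten successive substitutions compatible (each $\mu$-vector used in Proposition \ref{prop:main} must be the multiplicity sub-vector produced by the previous step, and ``$\mu_1=\ldots=\mu_s=1$'' as in Remark \ref{rem:main} is only literally available at the start — in later steps the block being split already has equal entries $m/\sqrt{s_j}$, which is proportional to $1^{s_j}$, so hypothesis (iii) of Corollary \ref{cor:main} is fine, but one must be careful that it is a block of \emph{equal} multiplicities that gets split, and the bookkeeping of which block is which must be exact), and ensuring integrality of the multiplicities at every stage by choosing the initial scaling factor $m$ to be a common multiple of $\sqrt{s_1},\ldots,\sqrt{s_{10}}$ times enough extra — this is legitimate because nefness of a ray is a property of the ray, not of a particular integral representative, and Proposition \ref{prop:main} only needs $m_1\in\mathbb Z$ at each application. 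Once the scalings are set up correctly, the conclusion that $\nu_n$ is nef follows because $C^{(10)}$ generates a non-effective (a priori) ray on $\partial\cQ_n$ proportional to $\nu_n$, and Lemma \ref{lem:nef} upgrades non-effectivity on $\partial\cQ_n$ to nefness; alternatively Corollary \ref{cor:main} already delivers ``$C_\delta\in\partial\cQ_n^\succcurlyeq$ is nef'' directly at each step, so the very last application outputs a nef class proportional to $N_n$, which is the claim.
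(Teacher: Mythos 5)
Your proposal has the right skeleton — invoke (SN) at $n=10$ via Proposition \ref{prop:SN10}, iterate the degeneration machinery of Corollary \ref{cor:main} to split each of ten multiplicities into a block, and read off nefness of $\nu_n$ — but it starts from the wrong seed class, and that error is not repairable by the scaling you invoke.

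You take as the base object (an approximation of) the Nagata ray $\nu_{10}=[\sqrt{10};1^{10}]$. Splitting its $i$-th entry into $s_i$ copies via the Remark \ref{rem:main} normalization produces, exactly as you compute, the class $C^{(10)}=\bigl(\delta_0;(m/\sqrt{s_1})^{s_1},\ldots,(m/\sqrt{s_{10}})^{s_{10}}\bigr)$ on $X_n$. This class is \emph{not} proportional to $N_n=(\sqrt n,1^n)$ unless $s_1=\cdots=s_{10}$: the ten blocks have pairwise distinct multiplicities $m/\sqrt{s_i}$, and rescaling by any global constant preserves their ratios, so no rescaling makes them all equal to $1$. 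Your claimed step ``scaling the whole construction by $\sqrt{s_1\cdots s_{10}}/m$ \ldots turns $C^{(10)}$ into a multiple of the Nagata class'' does not work, and your appeal to $(C^{(10)})^2=0$ forcing $\delta=\sqrt n$ only kicks in \emph{after} all multiplicities have been made equal, which is precisely the step that fails. (Choosing larger $\eta$ per block to normalize each block to $1$ also fails: Corollary \ref{cor:main} only delivers $C_\eta\in\partial\cQ_n$, hence nefness, at $\eta=\delta$; for $\eta>\delta$ one gets non-effectivity but $C_\eta^2<0$.)

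The fix is to start from a different ray on $X_{10}$, namely $\xi=[\sqrt n;\sqrt{s_1},\ldots,\sqrt{s_{10}}]$. One checks $\xi^2=n-\sum s_i=0$, so $\xi\in\partial\cQ_{10}$, and $\xi\cdot K_{10}=-3\sqrt n+\sum\sqrt{s_i}\ge 0$ by \eqref{eq:appl}, so $\xi\in\partial\cQ_{10}^\succcurlyeq$. This is where \eqref{eq:appl} is actually consumed — it places the seed ray on the $K$-nonnegative side of $\partial\cQ_{10}$ \emph{before} any degeneration happens, not as a running constraint during the iteration as you suggest. Since $\xi$ is irrational, approximate it by rational rays in $\partial\cQ_{10}^\succcurlyeq$; these are good by (SN) for $n=10$ via Proposition \ref{prop:SN10}. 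Apply Corollary \ref{cor:main} ten times to each approximant, splitting the $i$-th multiplicity (which is close to $\sqrt{s_i}$) into $s_i$ copies of something close to $1$, using $D=(\sqrt{s_i};1^{s_i})$ nef by hypothesis. The resulting nef rays on $X_n$ converge to $[\sqrt n;1^n]=\nu_n$, which is therefore nef.
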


\begin{proof}  Consider the ray $[ \sqrt n ,  \sqrt {s_1}, \ldots,   \sqrt {s_{10}}]$
which, by the hypotheses, is in  $\partial\cQ_{10}^\succcurlyeq$. We can approximate it  by good  rays  (see Proposition \ref {prop:SN10}).  By Corollary \ref {cor:main}, we see that $\nu_n$ is the limit of nef rays, hence it is nef. \end{proof}

\begin{proof} [Proof of Proposition \ref {prop:appl}.] We argue by induction. Let $n\ge 90$, and write $n=9h+k$, with $9\le k\le 17$ and $h\ge 9$. By induction both $\nu_h$ and $\nu_k$ are nef.  Moreover \eqref {eq:appl} is in this case $3\sqrt {9h+k}\le 9\sqrt h +\sqrt k$,  which reads $h\ge \frac {16}{81}k$, which is verified because $k\le 17$ and $h\ge 9$. 
Then $\nu_n$ is nef by Lemma \ref {lem:appl}. \end{proof}

\begin{remark}\label{rem:appl} Lemma \ref {lem:appl} is reminiscent of the results in \cite {BZ} and \cite{RR09}. 

The hypotheses in Proposition \ref {prop:appl} can be weakened. For instance, Lemma \ref {lem:appl} implies that, if (SN) holds for $n=10$, then $\nu_{13}$ is nef. Actually, it suffices to know that  $[ \sqrt {13} ;  2, 1^9]$ is nef. As in Example \ref {ex:seq}, 
we may take a sequence $\{(q_n,p_n)\}_{n\in \bbN}$ such that  $\frac {p_n+2q_n}{p_n}$ are the convergents of the periodic continued fraction expansion of $\sqrt {13}=[3; \overline{1^3,6}]$, so that 
\[ p_1=2, \; p_2=3, \; p_3=5, \; p_4=20 ,\ldots \;\; q_1=1, \; q_2=2, \; q_3=3, \; q_4=13  \ldots. \]
The sequence of rays $\{[B_{q_n,p_n}]\}_{n\in \bbN}$ converges to $\nu_{13}$. If we knew that the rays of the sequence are good, this would imply (N) for $n=13$.  Note that $B_{q_1,p_1}=(13;5, 4^9)$ generates  a good ray by Proposition \ref {prop:10}. 

Similarly, if (SN) holds for $n=10$, then $\nu_{n}$ is nef for $n=10h^2$, etc.
We do not dwell on these improvements here.
\end{remark}

\end{document}